\newtheoremstyle{myremark}     {10pt}{10pt}{}{}{\bfseries}{.}{.5em}{}
\newtheorem{thm}{Theorem}[section]
\newtheorem{cor}[thm]{Corollary}
\newtheorem{lem}[thm]{Lemma}
\newtheorem{pro}[thm]{Proposition}
\theoremstyle{definition}
\newtheorem{defn}[thm]{Definition}
\newtheorem{exmp}[thm]{Example}
\theoremstyle{myremark}
\newtheorem{rem}[thm]{Remark}
\numberwithin{equation}{section}
\begin{document}

	\title[Bilateral gamma approximation]{Bilateral gamma approximation on Wiener Space}

	\author[Barman, Ichiba and  Vellaisamy]{Kalyan Barman, Tomoyuki Ichiba and  Palaniappan Vellaisamy}
	\address{\hskip-\parindent
		Kalyan Barman, Department of Mathematics, IIT Bombay,
		Powai - 400076, India.}
	
	\email{barmankalyan@math.iitb.ac.in}


		
		\address{\hskip-\parindent
			Tomoyuki Ichiba, Department of Statistics and Applied Probability, UC Santa Barbara, Santa Barbara, CA, 93106, USA.}
		\email{ichiba@pstat.ucsb.edu}
		
		\address{\hskip-\parindent
		Palaniappan Vellaisamy, Department of Statistics and Applied Probability, UC Santa Barbara, Santa Barbara, CA, 93106, USA.}
	\email{pvellais@ucsb.edu}
		\subjclass[2020]{60F05; 60E05; 60H07; 62E17}
	\keywords{Bilateral gamma distribution, Malliavin-Stein method, Wiener chaos, Homogeneous sums, Non-central limit theorem.}
	
	\begin{abstract}
		This paper deals with bilateral-gamma ($BG$) approximation to functionals of an isonormal Gaussian process. We use Malliavin-Stein method to obtain the error bounds for the smooth Wasserstein distance. As by-products, the error bounds for variance-gamma ($VG$), Laplace, gamma and normal approximations are presented. Our approach is new in the sense that the Stein equation is based on integral operators rather than differential operators commonly used in the literature. Some of our bounds are sharper than the existing ones. For the approximation of a random element from the second Wiener chaos to a $BG$ distribution, the bounds are obtained in terms of their cumulants. Using this result, we show that a sequence of random variables (rvs) in the second Wiener chaos converges in distribution to a $BG$ rv if their cumulants of order two to six converge. As an application of our results, we consider an approximation of homogeneous sums of independent rvs to a $BG$ distribution, and mention some related limit theorems also. Finally, an approximation of a $U$-statistic to the $BG$ distribution is discussed.
	\end{abstract}

	\maketitle
	
\section{Introduction} \label{Intro}	

\noindent
Approximations related to limit theorems are fundamental issues in probability theory; see, for instance \cite{BU1}, \cite{Novak1}, \cite{k24}, \cite{k1} and \cite{k7}. The study of approximations related to limit theorems on Wiener space has been a keen interest of researchers in recent years (see \cite{MS0}, \cite{key2} and \cite{nourdin}). The first study in this direction is the central limit theorem due to Nualart and Peccati (see \cite{nup0}). They proved a necessary and sufficient condition for a sequence of random variables (rvs), belonging to a fixed Wiener chaos, converges in distribution to a Gaussian random variable (rv). This result is known as fourth moment theorem in the literature. 

\vskip 1ex
\noindent
A few years later, Nourdin and Peccati \cite{nourdin} combined Stein's method for normal approximation with Malliavin calculus on the Wiener space, called the Malliavin-Stein method, which provides an accuracy of approximation in the fourth moment theorem for functionals on Wiener space.

\vskip 1ex
\noindent 
The Malliavin-Stein method has also been extended to other probability distributions such as product normal \cite{MSPN}, Laplace \cite{gauntnew} and gamma \cite{nourdin1} distributions. In this direction, Eichelsbacher and Th$\ddot{\text{a}}$le \cite{key2} developed Malliavin-Stein method for variance-gamma ($VG$) approximation on the Wiener space. The Stein equation considered in \cite{key2} is a second-order differential equation. Using the estimates of the solution of the Stein equation, they obtain the bounds for $VG$ approximation in terms of Malliavin operators and norms of contractions. However, their bounds contain two indeterminate positive constants (see Equation (4.1) of \cite{key2}). Further, the authors establish a ``six moment theorem'' for the $VG$ approximation of double Wiener-It$\hat{\text{o}}$ integrals. Later, $VG$ approximations on the Wiener space are also discussed in \cite{rosen3,MSPN,MSLC0,rosen1}, in which the limiting distributions are represented as the difference of two centered gamma rvs or the product of two mean zero normal rvs. Also, Gaunt \cite{kk2} considered symmetric variance-gamma ($SVG$) approximation of double Wiener-It$\hat{\text{o}}$ integrals. Recently, Azmoodeh {\it et al.} \cite{MS001} established a six moment theorem for $VG$ approximation of double Wiener-It$\hat{\text{o}}$ integrals with optimal convergence rate. More recently, Gaunt \cite{gauntnew} obtains some new bounds for the solution of $VG$ Stein equation. As an application, the author obtains explicit error bounds for $VG$ approximation of double Wiener-It$\hat{\text{o}}$ integrals.

\vskip 1ex
\noindent
The bilateral-gamma ($BG$) distributions are the convolution of two gamma distributions, which have a rich distributional theory and applications (see \cite{bilateral0} and \cite{bilateral1}). This family contains several classical distributions as special or limiting cases, such as normal, product normal, gamma, $SVG$, $VG$, normal variance-mean mixture and Laplace distributions. The problem of $BG$ approximation on Wiener space is new and only some special or limiting cases such as gamma, normal, product-normal, Laplace, $VG$ and $SVG$ approximations have been considered so far in the literature. Therefore, it is of interest to study $BG$ approximation for functionals on Wiener space and obtain the error bounds so that the corresponding bounds and limit theorems follow for some special distributions mentioned above.

\vskip 1ex
\noindent
In this article, we combine Malliavin calculus and Stein's method to derive bounds for the $BG$ approximation of functionals of an isonormal Gaussian process. In particular, we derive bounds for the $BG$ approximation of the rvs belonging to the second Wiener chaos, in terms of their cumulants. Our approach of deriving bounds is also new as our Stein equation is based on integral operators, rather than differential operators commonly used in the literature. Also, this approach is rather simpler as can be seen for example in the case of $VG$ approximations. It is also shown that some of our bounds are sharper than the existing bounds. Further, we establish that a sequence of rvs in the second Wiener chaos converges in distribution to a $BG$ rv if their cumulants of order two to six converge to that of a $BG$ rv. We discuss also the approximations of homogeneous sums of independent rvs to a $BG$ distribution and show that the limiting distribution belongs to the class of $BG$ distributions. Finally, as an application, the convergence of a $U$-statistic to the $BG$ distribution is also presented.

\vskip 1ex
\noindent
The article is organized as follows. In Section \ref{nopre}, we introduce some notations and review some required preliminary results. Section \ref{probgdistribution} deals with $BG$ distribution, and its properties. A Stein equation for a $BG$ distribution, and its solution are obtained. In Section \ref{BSBGD}, we derive the error in approximation of a functionals of an isonormal Gaussian process to a $BG$ distribution. Also, for a sequence of rvs from the second Wiener chaos and its approximation to a $BG$ distribution, the error bounds are obtained in terms of their cumulants. As a special case, we mention the results for normal, gamma, $VG$, $SVG$, and Laplace distributions. A comparison of our results with the existing results is also presented. In Section \ref{applica}, we consider approximation problem of homogeneous sums of independent rvs to a $BG$ distribution. As corollaries, the limit theorems for $SVG$ and normal distributions are pointed out. As an example, an approximation of a $U$-statistic to the $BG$ distribution is considered.

\section{Notations and Preliminaries}\label{nopre}
\noindent
In this section, we introduce the notations and state some preliminaries required for this article.
\subsection{Function spaces and probability metrics}\label{PP2:FS}
Let $\mathbb{N}= \{1,2,\ldots,\infty \}$ be the set of all natural numbers and  $\mathbb{N}_{0}=\mathbb{N} \cup \{ 0\}$. Let $\mathcal{S}(\mathbb{R})$ be the Schwartz space defined by
$$\mathcal{S}(\mathbb{R}):=\left\{f\in C^\infty(\mathbb{R}): \lim_{|x|\rightarrow \infty} |x^m f^{(n)}(x)|=0, \text{ for all } m,n\in \mathbb{N}_{0} \right\},$$
\noindent
where $f^{(n)}$, $n\geq 1$, denotes henceforth the $n$th derivative of $f$ with $f^{(0)}=f$ and $C^\infty(\mathbb{R})$ is the space of infinitely differentiable functions on $\mathbb{R}$. Note that the Fourier transform (FT) on $\mathcal{S}(\mathbb{R})$ is automorphism. If $f\in \mathcal{S}(\mathbb{R})$, then its FT is given by $\widehat{f}(u):=\int_{\mathbb{R}}e^{-iux}f(x)dx,~~\text{  }u\in\mathbb{R} $ and its inverse FT is given by $f(x):=\frac{1}{2 \pi}\int_{\mathbb{R}}e^{iux}\widehat{f}(u)du,~~\text{  }x\in\mathbb{R} $, see \cite{stein}.

\vskip 2ex
\noindent
Next, we define smooth Waserstein distance (see \cite{k0} or \cite{MS001}). Consider the function space
\vfill
\vfill

\begin{align}\label{fs1}
\mathcal{W}_r = \left\{h:\mathbb{R}\to \mathbb{R} ~\big|~ h \mbox{ is $r$ times differentiable and }\|h^{(k)}\|\leq 1,~ k =0, 1,\ldots, r \right\},
\end{align} 
\noindent
where $\|h\|=\sup_{x\in\mathbb{R}}|h(x)|$. Then, for any two random variables $Y$ and $Z$,
\begin{equation}\label{smwdis}
\sup_{h \in \mathcal{W}_r}\left|\mathbb{E}[h(Y)]-\mathbb{E}[h(Z)]\right|:=d_{\mathcal{W}_r}(Y,Z)=d_{r}(Y,Z)~(\text{say})
\end{equation}
\noindent
is called the smooth Wasserstein distance. Also, when we choose the function spaces
\begin{align*}
	\mathcal{M}_K &= \left\{h:\mathbb{R}\to \mathbb{R}~\big|~h=\mathbf{1}_{(-\infty,x]},~x\in \mathbb{{R}} \right\},\\
	\mathcal{M}_W &= \left\{h:\mathbb{R}\to \mathbb{R}~\big|~h \text{ is 1-Lipschitz and }|h(x)-h(y)|\leq |x-y|  \right\},
\end{align*}


\noindent
$d_{\mathcal{M}_K}=d_K$ (say) and $d_{\mathcal{M}_W}=d_W$ (say) are called the Kolmogorov and Wasserstein distances, respectively (see \cite{k0}). It is known from Appendix A of \cite{k0}
\begin{align*}
d_{{r-1}}(Y,Z) \leq 3 \sqrt{2} \sqrt{d_{{r}}(Y,Z)}, ~r \geq 2,
\end{align*}
\noindent
and
\begin{align}\label{mre1}
d_{{1}}(Y,Z) \leq \left(3 \sqrt{2} \right)^{\sum_{i=1}^{r-1}\frac{1}{2^{i-1}}} \left(d_{{r}}(Y,Z) \right)^{\frac{1}{2^{r-1}}},~r\geq 2.
\end{align}

\noindent
Also, 
$$d_{K}(Y,Z) \leq c \sqrt{ d_{{1}}(Y,Z)},$$
\noindent
where $c$ is some positive constant. Using \eqref{mre1}, we get
$$d_{K}(Y,Z) \leq c\left(3 \sqrt{2} \right)^{{(1-2^{1-r})}} \left(d_{{r}}(Y,Z) \right)^{2^{-r}},~r\geq 2.$$
\noindent
In addition, the order relationship between $d_{r}$ and $d_W$ is 
\begin{align}\label{metrico1}
d_{{r}}(Y,Z) \leq d_{{1}}(Y,Z) \leq d_{W}(Y,Z),~r \geq 1. 
\end{align}

\subsection{Some basic results on Malliavin calculus}\label{mcal}
Here, we present some basic properties from Gaussian analysis
and Malliavin calculus, that will be used later. Let $\mathcal{H}$ be a real separable Hilbert space and $Y=\{Y(h): h\in \mathcal{H} \}$ be a centered Gaussian family defined on a probability space $(\Omega,\mathcal{F},\mathbb{P})$ such that $\mathbb{E}(Y(h))=0$ and $Cov(Y(h),Y(g))=\langle h,g \rangle_{\mathcal{H}}$. Note that $Y$ is called the isonormal Gaussian process over $\mathcal{H}$. Let $\mathcal{H}^{\otimes m}$ and $\mathcal{H}^{\odot m}$ denote respectively $m$th tensor product and $m$th symmetric tensor product of $\mathcal{H}$ with convention $\mathcal{H}^{\otimes 0}=\mathcal{H}^{\odot 0}=\mathbb{{R}}.$ For $q\geq 1$, the $q$th degree Hermite polynomial, denoted by $H_q(x)$, $x\in \mathbb{{R}}$, is defined as 
\begin{align}H_q(x)=(-1)^q e^{\frac{x^2}{2}} \frac{d^q}{dx^q}(e^{-\frac{x^2}{2}})=(-1)^{q} \frac{\phi ^{(q)}(x)}{\phi(x)}, 
\end{align}
\noindent
 where $\phi(x)$ denotes the density of the standard normal $ \mathcal{N}(0,1)$ distribution. Then, the $q$th Wiener chaos of $Y$, denoted by $\mathcal{H}_q$, defined as the closed linear subspace of $L^2 (\Omega,\mathcal{F},\mathbb{P}):=L^2(\Omega)$, which is generated by the family $\{H_q(Y(h)) : h \in \mathcal{H}, \|h\|_{\mathcal{H}} =1 \}$. We write by convention $\mathcal{H}_0=\mathbb{{R}}.$ For $q\geq 1$, the mapping $I_q(h^{\otimes q})= H_{q}(Y(h))$ can be extended to a linear isometry between $\mathcal{H}^{\odot q}$ and the $q$th Wiener chaos $\mathcal{H}_q.$ For $q=0$, we write $I_0(c)=c$, $c\in \mathbb{{R}}$. It is well-known that $L^2(\Omega)$ can be decomposed into the infinite orthogonal sum of the spaces $\mathcal{H}_q$. Hence for any $G\in L^2(\Omega)$, we get by Stroock formula
\begin{align}\label{expansion}
	G=\sum_{q=0}^{\infty}I_{q}(h^{\odot q})=\mathbb{E}(G)+\sum_{q=1}^{\infty}H_q(Y(h)),
\end{align}
\noindent
where $h^{\odot q} \in \mathcal{H}^{\odot q},~q\geq 0,$ with $h^{\odot 0}=\mathbb{E}(G).$ 
\vskip 2ex
\noindent
Let $1 \leq p \leq q$ be integers and let $\{e_j \}_{j \geq 1}$ be an orthonormal basis of $\mathcal{H}$. Given integers $j_1,\ldots,j_p,k_1,\ldots,k_q$ and $r=0,1,\ldots,p,$ the $r$th contraction of the two tensor products $e_{j_1} \otimes \ldots \otimes e_{j_p}$ and $e_{k_1} \otimes \ldots \otimes e_{k_q},$ is defined as follows.  
$$(e_{j_1} \otimes \ldots \otimes e_{j_p}) \otimes_0 (e_{k_1} \otimes \ldots \otimes e_{k_q})=e_{j_1} \otimes \ldots e_{j_p} \otimes e_{k_1} \otimes \ldots \otimes e_{k_q},$$
\noindent
and, for $r=1,2,\ldots,p,$
$$(e_{j_1} \otimes \ldots \otimes e_{j_p}) \otimes_r (e_{k_1} \otimes \ldots \otimes e_{k_q})=\bigg[\prod_{l=1}^{r} \langle e_{jl},e_{kl} \rangle_{\mathcal{H}} \bigg]e_{j_{r+1}} \otimes \ldots e_{j_p} \otimes e_{k_{r+1}} \otimes \ldots \otimes e_{k_q}.$$
\noindent
If $r=q=p,$ then
$$(e_{j_1} \otimes \ldots  \otimes e_{j_p}) \otimes_p (e_{k_1} \otimes \ldots \otimes e_{k_q})=\prod_{l=1}^{p} \langle e_{jl},e_{kl} \rangle_{\mathcal{H}}=\langle e_{j_1} \otimes \ldots \otimes e_{j_p},  e_{k_1} \otimes \ldots \otimes e_{k_p}\rangle_{\mathcal{H}^{\otimes p}} .$$
\noindent
Next, let $f\in \mathcal{H}^{\otimes p}$ and $g \in \mathcal{H}^{\otimes q}$, then for every $r=0,1,\ldots,p\wedge q,$ the contraction of $f$ and $g$ of order $r$ is the element of $\mathcal{H}^{\otimes (p+q-2r)}$, defined by
$f \otimes_0 g=f \otimes g$, and for $p=q$, $f \otimes_p g= \langle f,g \rangle _{ \mathcal{H}^{\otimes p}},$ and for $1\leq r \leq (p-1),$

$$f \otimes_r g= \sum_{i_1,\ldots,i_r=1}^{\infty} \langle f, e_{i_1} \otimes \ldots \otimes e_{i_r} \rangle_{\mathcal{H} ^{\otimes r}} \otimes \langle g, e_{i_1} \otimes \ldots \otimes e_{i_r} \rangle_{\mathcal{H} ^{\otimes r}}. $$

\noindent
Note that $f \otimes_r g$ does not depend on the basis and also not necessarily symmetric. The symmetric version is denoted by $f \tilde{\otimes}_r g \in \mathcal{H}^{\odot (p+q-2r)}$. Also, if $f \in \mathcal{H}^{\odot p}$ and $g \in \mathcal{H}^{\odot q},$ and $1\leq q \leq p$, then
\begin{align}\label{prductfor1}
I_p(f)I_q(g)=\sum_{r=0}^{p \wedge q} \binom{p}{r} \binom{q}{r} I_{p+q-2r}(f \tilde{\otimes}_r g ),
\end{align} 
\noindent
and

\begin{align}\label{prductfor2}
\mathbb{E} \bigg(I_p(f) I_q (g) \bigg)&=\begin{cases}
p! \langle f, g\rangle_{\mathcal{H}^{ \otimes p} },& \text{ if }q=p\\
0, &\text{ otherwise}.
\end{cases} 
 \end{align}

\vskip 2ex
\noindent
  Let $Y$ be an isonormal Gaussian process. Let $g: \mathbb{{R}}^n \to \mathbb{{R}}$ be a $C^\infty$-function such that its partial derivatives have polynomial growth. Then a rv $g$ of the form $G=(Y(h_1),Y(h_2),\ldots,Y(h_n))$
 with $n\geq 1$, $h_1,h_2,\ldots,h_n \in \mathcal{H}$, is called a smooth rv and let $\mathcal{S}$ be the set of all smooth rvs. The Malliavin derivative of $G$ with respect to $Y$ is the element of $L^{2}(\Omega, \mathcal{H})$ defined as
 $$DG=\sum_{i=1}^{n}\frac{\partial }{\partial y_i}g(Y(h_1),\ldots,Y(h_n)) h_i,$$
 \noindent
  and the $m$th derivative ($m\geq 2$) $D^m G$ is an element of $L^2 (\Omega, \mathcal{H}^{\odot m})$, defined by
 $$D^m G=\sum_{i_1,\ldots,i_m=1}^{m}\frac{\partial ^m}{\partial y_{i_1},\ldots,\partial y_{i_m} }g(Y(h_1),\ldots,Y(h_m))h_{i_1} \otimes \ldots \otimes h_{i_m}.$$
 \noindent
 For $m\geq 1$ and $p\geq 1$, let $\mathbb{D}^{m,p}$ denotes the closure of $\mathcal{S}$ with respect to the norm defined as
 $$\|G \|^{p}_{m,p}= \mathbb{E}[|G|^p]+\sum_{i=1}^{m}\mathbb{E}[\|D^{i}G\|^{p}_{\mathcal{H}^{\otimes i}}  ],$$
 \noindent
 and $\mathbb{D}^{\infty}:= \cap_{m\geq 1}\cap_{p\geq 1}\mathbb{D}^{m,p}$. That is, $\mathbb{D}^{m,p}$ is the domain of $D^m$ in $L^p(\Omega)$. Moreover, for any $q\geq p \geq 2$ and $n \geq m$, $\mathbb{D}^{n,q} \subset \mathbb{D}^{m,p}$. 
 
 \vskip 2ex
 \noindent
 For $p\geq 1$, the adjoint of the operator $D^p$, called the $p$-th divergence operator, is denoted by $\delta^p$. The domain of $\delta^p$, denoted by $Dom(\delta^{p})$, is defined as 
 \begin{align*}
 	Dom(\delta^{p})=\Big\{u\in L^{2}(\Omega, \mathcal{H}^{\otimes p} ) & ~\Big|~  |\mathbb{E}[\langle D^{p}G, u  \rangle_{\mathcal{H} ^{\otimes p}} ]| \\
 	& \leq c_{u} \sqrt{\mathbb{E} (G^2)},~~\text{for some constant $c_u>0$} \Big\},
 \end{align*}
 \noindent
 see \cite{MS1}.  Then the $p$th divergence operator $\delta^{p}:Dom(\delta^{p}) \subset L^{2}(\Omega, \mathcal{H}^{\otimes p} ) \to L^{2}(\Omega)$ is defined, for each $u\in Dom (\delta^p),$ by
\begin{align}\label{dualfo}
\mathbb{E}(G\delta^{p}(u))=\mathbb{E}(\langle D^{p}G,u \rangle_{\mathcal{H}^{\otimes p}}),~G\in \mathcal{S},
\end{align}
\noindent
which is sometimes called the duality formula. 

 \noindent
 It is well-known that the Ornstein-Uhlenbeck (O-U) semigroup (see Section 2.8 of \cite{nourdin}) is defined by

 \begin{align}
 	P_t(G)=\sum_{q=0}^{\infty}e^{-qt}J_{q}(G),~t\geq 0,
 \end{align}
 \noindent
 where $J_{q}(G)=\text{Proj}(G|\mathcal{H}_q)=I_q(h^{\odot q})$ ($q\geq 0$) for any $G$ as in \eqref{expansion}. Also, the infinitesimal generator of the O-U semigroup is given by $L=\sum_{q=0}^{\infty}-qJ_q,$ where the domain of $L$ is
 $Dom(L)=\{F\in L^{2}(\Omega) : \sum_{q=1}^{\infty}q^2 \mathbb{E} (J{_q}(G))^2 < \infty   \}.$ It is known from the Proposition 2.8.8 of \cite{nourdin} that $\delta (DG)=-LG.$ Also, $D(f(G))=f^{\prime}(G)DG.$

 \noindent
 For any $G\in L^{2}(\Omega),$ the operator $L^{-1}G=\sum_{q=1}^{\infty}-\frac{1}{q}J_{q}(G),$ is called pseudo-inverse of $L$, and $L^{-1}G \in \text{Dom}L,$ and $LL^{-1}G=G-\mathbb{E}(G).$
 
 \vskip 1ex
 \noindent
 Let $f:\mathbb{{R}} \to \mathbb{{R}}$ be a differentiable function having bounded derivative. Then, for any $G,H\in \mathbb{D}^{1,2}$, the Malliavin integration by parts formula (see Theorem 2.9.1 of \cite{nourdin}) gives
 \begin{align}
 	\mathbb{E} \left(Gf(H)   \right)&=\mathbb{E}(G)\mathbb{E}(f(H))+\mathbb{E}\left(f^{\prime} (H) \langle DH,-DL^{-1} G\rangle_{\mathcal{H}} \right).\label{intbypfo}
 \end{align}
 \noindent
 When $H=G$, we get
 \begin{align}
 \nonumber\mathbb{E} \left(Gf(G)   \right)&=\mathbb{E}(G)\mathbb{E}(f(G))+\mathbb{E}\left(f^{\prime} (G) \langle DG,-DL^{-1} G\rangle_{\mathcal{H}} \right)\\
 &=\mathbb{E}(G)\mathbb{E}(f(G))+\mathbb{E}\left(f^{\prime} (G) \Gamma_{2}(G) \right),\label{ibp00}
 \end{align}
 \noindent
 where, for a rv $G\in\mathbb{D}^{\infty}$, $\Gamma_1(G)=G$, and for every $m\geq2,$
\begin{align}\label{gamop}
\Gamma_{m}(G)=\langle DG,-DL^{-1} \Gamma_{m-1}(G)\rangle_{\mathcal{H}} \in \mathbb{D}^{\infty},	
\end{align}
\noindent
are called $\Gamma$-operators. The relation \eqref{gamop} also holds under weak assumptions on the regularity of $G$, see Theorem 4.3 of \cite{nourdin0}. For $G\in \mathbb{D}^{1,2}$, it follows that $\Gamma_{2}(G) \in L^1(\Omega)$ and $Var(G)=\mathbb{E}(\Gamma_{2}(G))$, and for $G\in \mathbb{D}^{1,4}$, it follows that $\Gamma_{2}(G) \in L^2(\Omega)$.


\vskip 1ex
\noindent
Next we discuss relationship between $\Gamma$-operators and cumulants (see \cite{nourdin0} for more detail). Let $G$ be a rv with cf $\phi_{G}(t)=\mathbb{E} (e^{itG})$, $t\in \mathbb{{R}}$, and $\mathbb{E}(G^m)<\infty$, for $m\geq1$. Then its $m$-th cumulant is defined as
\begin{align}
\kappa_{m}(G):=(-i)^{m}\left[\frac{d^{m}}{dt^{m}}\log\phi_{G}(t)\right]_{t=0},~m\geq 1. 
\end{align}
\noindent
It is known that $\kappa_1(G)=\mathbb{E}(G)$, $\kappa_{2}(G)=Var(G)$. Also, for a rv $G\in \mathbb{D}^{\infty}$ (see \cite{MS0}), 
\begin{align}\label{CuGamma}
	\kappa_{m}(G)=(m-1)! \mathbb{E}(\Gamma_{m}(G)), ~m\geq 1.
\end{align}

\subsubsection{Some useful properties of the second Wiener chaos} For a rv $G=I_2(f)$, where $f\in\mathcal{H}^ {\odot 2}$, the Hilbert-Schmidt operator is defined (see \cite[p.42]{nourdin}) as 
$$A_f : \mathcal{H} \to \mathcal{H};~A_f(g) \to f \otimes_1 g ~\text{(1-contraction of $g$)},$$
\noindent
with eigen values $\{\lambda_{f,j} \}_{j\geq 1}$ and eigen vectors $\{e_{f,j} \}_{j\geq 1}.$ The sequence of auxiliary kernels $\{f \otimes_{1}^{(p)} f : p\geq 1  \} \subset \mathcal{H}^{\odot 2}$ are recursively defined by :
\begin{align}
f \otimes_1^{(1)}f=f;~f \otimes_1^{(p)}f=(f \otimes_1^{(p-1)}f)\otimes_1 f, 
\end{align}
\noindent
 for $p\geq 2$. In particular, $f \otimes_1^{(2)}f=(f \otimes_1^{(1)}f)\otimes_1 f=f \otimes_1 f.$ Also, for $m,p\geq 1,$
 \begin{align}\label{contract1}
 	\langle f\otimes_{1}^{(m)}f,f\otimes_{1}^{(p)}f\rangle_{\mathcal{H}^{\odot 2}}=\langle f\otimes_{1}^{(m+p-1)}f,f\otimes_{1}^{(1)}f\rangle_{\mathcal{H}^{\odot 2}}=\langle f\otimes_{1}^{(m+p-1)}f,f\rangle_{\mathcal{H}^{\odot 2}}.
 \end{align}


\begin{pro}\cite[p.43 ]{nourdin}
	Let $G=I_2(f)$ with $f\in \mathcal{H}^{\odot 2}$ be a rv in the second Wiener chaos. Then
	\begin{enumerate}
		\item [(i)] The rv $G$ admits the representation $G = \sum_{j=1}^{\infty}\lambda_{f,j} (Z_{j}^{2} -1),$ where $Z_j \sim \mathcal{N}(0,1).$ The random series converges in $L^2 (\Omega).$
		\item [(ii)] For every $p\geq 2,$ the $p$-th cumulant of $G$ is given by
		\begin{align}\label{sechaoscum}
		\kappa_p(G)=2^{p-1}(p-1)!\sum_{j=1}^{\infty}\lambda_{f,j}^{p}=2^{p-1}(p-1)!\langle f\otimes_1^{(p-1)} f,f \rangle_{\mathcal{H}^ {\odot 2}}.
		\end{align}
	\end{enumerate}
\end{pro}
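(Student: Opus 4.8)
The plan is to prove the two statements as natural consequences of the spectral structure of the Hilbert--Schmidt operator $A_f$ attached to the symmetric kernel $f\in\mathcal{H}^{\odot 2}$. The key observation is that $A_f$ is a symmetric Hilbert--Schmidt operator, so the spectral theorem supplies an orthonormal system of eigenvectors $\{e_{f,j}\}_{j\geq 1}$ with real eigenvalues $\{\lambda_{f,j}\}_{j\geq 1}$ satisfying $\sum_j \lambda_{f,j}^2<\infty$, and one can expand $f$ along the associated symmetric tensors. Concretely, I would first record that $f=\sum_{j\geq 1}\lambda_{f,j}\,e_{f,j}\otimes e_{f,j}$ in $\mathcal{H}^{\odot 2}$, which is the defining spectral decomposition of $A_f$ rewritten at the level of the kernel.

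For part (i), I would apply the second-order multiplication formula to each rank-one piece. Setting $Z_j=Y(e_{f,j})$, the orthonormality of the $e_{f,j}$ makes the $Z_j$ i.i.d.\ $\mathcal{N}(0,1)$, and by the identification $I_2(e_{f,j}\otimes e_{f,j})=H_2(Y(e_{f,j}))=Z_j^2-1$ we get, using linearity of $I_2$ and the spectral expansion of $f$, that $G=I_2(f)=\sum_{j\geq 1}\lambda_{f,j}(Z_j^2-1)$. Convergence in $L^2(\Omega)$ is then immediate from the isometry $\mathbb{E}[I_2(f)^2]=2\|f\|_{\mathcal{H}^{\odot 2}}^2$ together with $\|f\|_{\mathcal{H}^{\odot 2}}^2=\sum_j\lambda_{f,j}^2<\infty$, since the tail sums have $L^2$-norm controlled by the tail of $\sum_j\lambda_{f,j}^2$.

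For part (ii), I would compute the cumulants directly from the representation in (i). Because the summands $\lambda_{f,j}(Z_j^2-1)$ are independent and each $Z_j^2$ is $\chi^2_1$, cumulants add and the $p$-th cumulant of $\lambda(Z^2-1)$ equals $\lambda^p\,2^{p-1}(p-1)!$ (the centering by $1$ affects only the first cumulant, which does not enter for $p\geq 2$); summing over $j$ yields $\kappa_p(G)=2^{p-1}(p-1)!\sum_{j\geq 1}\lambda_{f,j}^p$. To obtain the second equality I would translate the power sum $\sum_j\lambda_{f,j}^p$ into a contraction inner product: iterating the definition of $A_f$ gives $\langle f\otimes_1^{(p-1)}f,\,f\rangle_{\mathcal{H}^{\odot 2}}=\sum_j\lambda_{f,j}^p$, which one checks by diagonalizing $A_f$ and using that the $(p-1)$-fold $1$-contraction of $f$ with itself corresponds to applying $A_f^{p-1}$, whose eigenvalues are $\lambda_{f,j}^{p-1}$, and pairing against $f$ recovers one more factor of $\lambda_{f,j}$.

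The main obstacle, and the step requiring the most care, is the justification of the interchange of the infinite sum with the operator $I_2$ and the verification that the cumulant formula for $\lambda(Z^2-1)$ transfers correctly to the infinite independent sum; this is where one must invoke $L^2$-convergence from (i) together with the summability $\sum_j|\lambda_{f,j}|^p<\infty$ for $p\geq 2$ (which follows from $\sum_j\lambda_{f,j}^2<\infty$) to guarantee that the cumulant series converges absolutely and that the logarithm of the characteristic function may be expanded term by term. The identity \eqref{contract1} relating iterated $1$-contractions is exactly the bookkeeping tool that makes the final rewriting of $\sum_j\lambda_{f,j}^p$ as a single contraction inner product clean.
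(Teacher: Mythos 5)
The paper does not actually prove this proposition --- it is quoted verbatim from Nourdin and Peccati \cite[p.43]{nourdin} --- and your argument is precisely the standard one given there: spectral decomposition $f=\sum_j\lambda_{f,j}\,e_{f,j}\otimes e_{f,j}$ of the Hilbert--Schmidt kernel, the identity $I_2(e_{f,j}\otimes e_{f,j})=H_2(Y(e_{f,j}))=Z_j^2-1$ combined with the $L^2$-isometry of $I_2$ for part (i), and additivity of cumulants over the independent scaled chi-square summands together with the identification of $f\otimes_1^{(p-1)}f$ as the kernel of $A_f^{p-1}$ for part (ii). Your proof is correct, and the two points that genuinely require care --- absolute convergence of the cumulant series, which follows from $\sum_j\lambda_{f,j}^2<\infty$ and hence $\sum_j|\lambda_{f,j}|^p<\infty$ for $p\geq 2$, and the interchange of $I_2$ with the infinite sum via $L^2$-continuity --- are both flagged and handled appropriately.
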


\subsection{Malliavin-Stein approach on Wiener space}\label{SM}
Finally, we discuss the combination of Stein's method with Malliavin calculus on the Wiener space. In general, Stein's method is based on the fact that, any real-valued random variable $Z$ follows a distribution $F_Z$ if and only if there exists an operator $\mathcal{A}$ (also called Stein operator) such that 
\begin{equation*}
	\mathbb{E}\left(\mathcal{A}f(Z) \right)=0,~~\text{for every}~~f \in \mathcal{F},
\end{equation*}
\noindent
where $\mathcal{F} $ is a suitably chosen function space. This characterization leads us to the Stein equation
\begin{equation}\label{normal2}
	\mathcal{A}f(x)=h(x)-\mathbb{E}h(Z), ~ x \in \mathbb{R},
\end{equation}
\noindent
where $h$ is a real-valued test function. Replacing $x$ with a rv $Y$ belongs to a fixed Wiener chaos, and taking expectations on both sides of  (\ref{normal2}), we get
\begin{equation}\label{normal3}
\mathbb{E}h(Y)-\mathbb{E}h(Z)=\mathbb{E}\left(\mathcal{A}f(Y) \right).
\end{equation}
\noindent 
The equality (\ref{normal3}) plays a crucial role in this approach. For a real valued test function $h$, the problem of bounding the quantity $|\mathbb{E}h(Y)-\mathbb{E}h(Z)|$ relies on the bounds for the solution of (\ref{normal2}) and behavior of $Y$. Recall that (see \eqref{smwdis})
$$d_r(Y,Z) =\sup_{h \in \mathcal{W}_r} \left|\mathbb{E}(h(Y))-\mathbb{E}(h(Z))\right|.$$
\noindent
From \eqref{normal3}, it follows that

$$d_r(Y,Z) \leq \sup_{f \in \mathcal{F}(\mathcal{W}_r)}\left| (\mathbb{E}\mathcal{A}f(Y)) \right|,$$
\noindent
 where $\mathcal{F}(\mathcal{W}_r)=\{f_h: h\in \mathcal{W}_r  \}$ is the set of solutions that satisfy \eqref{normal3}. For more details on Malliavin-Stein approach, see the recent review paper \cite{MS0} and the references therein.

\vskip 1ex
\noindent
In particular, if $q\geq 2$ be an integer and $ G_n=I_q(f_{n}),$ $n\geq 1$, such that $f_{n} \in \mathcal{H}^{\odot q}$ be a sequence of rvs belonging to the $q$-th Wiener chaos such that $\mathbb{E}(G_n^2)=1.$ Then, the fourth moment theorem (see Theorem 1 of \cite{nup0}) states that  $G_n \overset{\mathcal{L}}{\to} Z$ if and only if $\mathbb{E}(G_n^4) \to 3$, where $Z\sim \mathcal{N}(0,1)$ and the notation $\overset{\mathcal{L}}{\to}$ denotes the convergence in distribution. Using the Malliavin-Stein approach, Nourdin and Peccati proved (see Theorem 5.2.6 of \cite{nourdin}) that the upper bound of the Kolmogorov distance between $G_n$ and $Z$ is 
\begin{align}\label{fmth}
d_K(G_n,Z) \leq \sqrt{\frac{q-1}{3q}\left(\mathbb{E}(G_n^4)-3  \right)  }=\sqrt{\frac{q-1}{3q}\kappa_{4}(G_n)},
\end{align}
\noindent
where $\kappa_{4}(G_n)$ is the fourth cumulant of $G_n$. From \eqref{fmth}, if $\kappa_{4}(G_n) \to 0$ (or equivalently if $\mathbb{E}(G_n^4) \to \mathbb{E}(Z^4)=3$), then $d_K(G_n,Z) \to 0 \text{ as } n\to \infty,$ which implies that the rv $G_n$ converges to a normal $\mathcal{N}(0,1)$ distribution.

\section{Some basic results for bilateral gamma distribution}\label{probgdistribution}
\noindent
We start with definition and related results.
\subsection{Definition and properties}\label{prebg}
Here, we define $BG$ distributions and review some of their properties. The $BG$ distributions have a rich distributional theory, and contains several classical distributions as special or limiting cases, such as the normal, Laplace, $VG$, normal variance-mean mixtures and etc. (see \cite{bilateral0} and \cite{bilateral1}). Let $X_i,$ $i=1,2,$ be two independent gamma $Ga(\alpha_i,p_i)$ rvs with density
$$f_{i}(x)= \frac{\alpha_i^{p_i}}{\Gamma (p_i)}e^{-\alpha_i x}x^{p_i -1}, ~x>0,~i=1,2.$$
\noindent
Then the distribution of the rv $X=X_1-X_2$ is called the $BG$ distribution and

\noindent
we denote it by $BG(\alpha_1,p_1,\alpha_2,p_2).$ The characteristic function (cf) of a $BG$ distribution is 
\begin{align}
\phi_{bg}(z)&=\left( \frac{\alpha_1}{\alpha_1 -iz} \right)^{p_1}\left( \frac{\alpha_2}{\alpha_2 +iz} \right)^{p_2}\label{bicf0}\\
&=\exp\left(\int_{\mathbb{R}}(e^{izu}-1)\nu_{bg}(du)  \right),~~z\in\mathbb{R},\label{e1}
\end{align}
\noindent
where $\nu_{bg}$ is the L\'evy measure given by
\begin{align}\label{bglevymeasure}
\nu_{bg}(du)=\left(\frac{p_1  }{u}e^{-\alpha_1u}\mathbf{1}_{(0,\infty)}(u)-\frac{p_2  }{u}e^{-\alpha_{2}|u|}\mathbf{1}_{(-\infty,0)}(u)\right)du.
\end{align}

\noindent
Next, we list some special and limiting distributions of the $BG$ family (see \cite{bilateral0}).
\begin{enumerate}
	\item[(i)] When $p_1=p_2=p$, then $BG(\alpha_1,p,\alpha_2,p)$ is the $VG$ distribution, denoted by $VG(\alpha_1,\alpha_2,p)$.
	\item [(ii)] When $\alpha_1=\alpha_2=\alpha$ and $p_1=p_2=p$, then $BG(\alpha,p,\alpha,p)$ follows $SVG$ distribution, denoted by $SVG(\alpha,p)$.
	\item [(iii)] The $SVG(\alpha,1)$ is called the Laplace distribution, which we denote by $La(\alpha)$.
	
	\item [(iv)] The limiting case as $p\to \infty$,  the $SVG( \sqrt{2p}/\alpha,p)$ converges in law to a $\mathcal{N}(0,\alpha^2)$ distribution.
	
	\item [(v)] The limiting case as $\alpha_2 \to \infty$, the $BG(\alpha_1,p_1,\alpha_2,p_2)$ converges in law to

	\noindent
	$Ga(\alpha_1,p_1)$ distribution.
\end{enumerate}
\noindent
Note that the $BG$ distributions are infinitely divisible and self-decomposable, see \cite{bilateral0}. Also, it is known that (see \cite{bilateral0}), if $Y_1 \sim BG(\alpha_1,p_1,\alpha_2,p_2)$ and $Y_2 \sim BG(\alpha_1,q_1,\alpha_2,q_2)$, then $Y_1+Y_2 \sim BG(\alpha_1,p_1+q_1,\alpha_2,p_2+q_2),$ that is, when the scale parameters of the gamma distributions are the same, the $BG$ distributions are stable under convolution. Let now $X\sim BG(\alpha_1,p_1,\alpha_2,p_2)$. The $j$th cumulant (see \cite{bilateral0}) is given by
\begin{align}
\kappa_{j}(X)&=(j-1)!\left(\frac{p_1}{\alpha_1^j}+(-1)^j \frac{p_2}{\alpha_2^j}  \right),~j\geq 1. \label{cuforbg}
\end{align}
\noindent
In particular, the first two cumulants (that is, mean and variance) of $X$ are 
$\kappa_1(X)=\mathbb{E}(X)= \left(\frac{p_1}{\alpha_1}-\frac{p_2}{\alpha_2}\right) \text{ and } \kappa_{2}(X)=\text{Var}(X)=\left(\frac{p_1}{\alpha_1^{2}}+\frac{p_2}{\alpha_{2}^{2}}\right).$ Also, from the definition of $BG$ rv $X$ as $X=X_1-X_2$, where $X_1,$ $X_2$ are independent gamma $Ga(\alpha_i,p_i)$, $i=1,2,$ rvs, we have
\begin{align}
\nonumber \mathbb{E}(X^k )&=\sum_{j=0}^{k}\binom{k}{j}(-1)^j\mathbb{E}(X_1^{k-j})\mathbb{E}(X_2^{j})\\
&=\sum_{j=0}^{k}\binom{k}{j}(-1)^j \frac{1}{ \alpha_{1}^{k-j} \alpha_2^j} \frac{\Gamma (p_1+k-j)  \Gamma(p_2+j) }{\Gamma p_1 \Gamma p_2 },
\end{align}
\noindent
since $\mathbb{E}(X_i^n)=\frac{1}{\alpha_i^{n}} \frac{\Gamma(p_i+n)}{\Gamma(p_i)},$ $i=1,2,$ for $n\geq1$, see \cite{gamdiff}.

\subsection{A Stein equation for a $BG$ distribution}\label{SEBGD}
In this section, we derive a Stein equation for the $BG$ distribution and solve it using the semigroup approach. Recently, a Stein equation for a $BG$ distribution in the form of second order differential equation is given in \cite{gamdiff}. Their approach is based on the density of the $BG$ distributions, which is quite involved. However, we derive here a Stein identity in the integral form involving the L\'evy measure. 
\begin{pro}\label{th1}
	Let $X \sim BG(\alpha_1,p_1,\alpha_2,p_2).$ Then,
	\begin{equation}\label{PP2:StenIdTSD}
	\mathbb{E}\left(Xf(X)-\displaystyle\int_{\mathbb{R}}f(X+u)u\nu_{bg}(du) \right)=0,~~f\in\mathcal{S}(\mathbb{R}),  
	\end{equation}
	\noindent
	where $\nu_{bg}$ is the L\'evy measure given in \eqref{bglevymeasure}.	
\end{pro}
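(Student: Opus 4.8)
The plan is to avoid the density of the $BG$ law entirely and work on the Fourier side, exploiting the L\'evy--Khintchine representation \eqref{e1}. Since $f\in\mathcal{S}(\mathbb{R})$, I would write $f(x)=\frac{1}{2\pi}\int_{\mathbb{R}}e^{izx}\widehat{f}(z)\,dz$ and insert this into both terms of \eqref{PP2:StenIdTSD}. For the first term, interchanging the expectation with the $z$-integral gives $\mathbb{E}(Xf(X))=\frac{1}{2\pi}\int_{\mathbb{R}}\widehat{f}(z)\,\mathbb{E}(Xe^{izX})\,dz$, and I would rewrite $\mathbb{E}(Xe^{izX})=-i\,\phi_{bg}'(z)$. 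For the second term, the same substitution followed by Fubini yields $\mathbb{E}\int_{\mathbb{R}}f(X+u)u\,\nu_{bg}(du)=\frac{1}{2\pi}\int_{\mathbb{R}}\widehat{f}(z)\,\phi_{bg}(z)\big(\int_{\mathbb{R}}u e^{izu}\,\nu_{bg}(du)\big)\,dz$. Thus the identity reduces to checking that the two integrands against $\widehat{f}(z)$ agree for each fixed $z$.

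The heart of the matter is then a one-line consequence of \eqref{e1}. Differentiating $\log\phi_{bg}(z)=\int_{\mathbb{R}}(e^{izu}-1)\,\nu_{bg}(du)$ under the integral sign gives $\phi_{bg}'(z)/\phi_{bg}(z)=\int_{\mathbb{R}}iu e^{izu}\,\nu_{bg}(du)$, so that $-i\,\phi_{bg}'(z)=\phi_{bg}(z)\int_{\mathbb{R}}u e^{izu}\,\nu_{bg}(du)$, which is exactly the integrand appearing on the second side. Hence the two Fourier representations coincide and \eqref{PP2:StenIdTSD} follows. I would also record the explicit evaluation $\int_{\mathbb{R}}u e^{izu}\,\nu_{bg}(du)=\frac{p_1}{\alpha_1-iz}-\frac{p_2}{\alpha_2+iz}$, obtained directly from \eqref{bglevymeasure}; setting $z=0$ returns $\mathbb{E}(X)=\frac{p_1}{\alpha_1}-\frac{p_2}{\alpha_2}$, a useful consistency check and the reason no separate drift term survives in the Stein operator.

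The main obstacle is purely one of licensing the interchanges of integration, since $\nu_{bg}$ is not a finite measure: it has a non-integrable singularity $\sim p_1/u$ at the origin. The saving fact is that the weight $u$ tames this singularity, so that $u\,\nu_{bg}(du)$ is a finite measure near $0$, and the explicit formula above shows $\big|\int_{\mathbb{R}}u e^{izu}\,\nu_{bg}(du)\big|=O(1/|z|)$ as $|z|\to\infty$ while $|\phi_{bg}(z)|\leq 1$. Combined with the rapid decay of $\widehat{f}$ for $f\in\mathcal{S}(\mathbb{R})$, this makes every double (respectively triple) integral absolutely convergent, so that Fubini's theorem and differentiation under the integral sign are both legitimate. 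I would carry out these integrability estimates carefully, controlling the contribution of $\{|u|\leq 1\}$ via $\int_{|u|\leq 1}|u|\,\nu_{bg}(du)<\infty$ and the tails via the exponential factors $e^{-\alpha_i|u|}$; this is where the real work lies, as the algebraic identity itself is immediate once the manipulations are justified.
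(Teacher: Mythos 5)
Your proposal is correct and rests on exactly the same key identity as the paper's proof, namely $\phi_{bg}'(z)=i\,\phi_{bg}(z)\int_{\mathbb{R}}u e^{izu}\nu_{bg}(du)$ obtained by differentiating the logarithm of the L\'evy--Khintchine representation \eqref{e1}; the only difference is that you pair $\widehat{f}$ directly with the characteristic function, whereas the paper transfers the identity to the density $h_X$ of $X$ by Fourier inversion and then integrates against $f$ --- a Parseval-dual execution of the same argument. Your version has the mild advantage of never invoking the density of $X$, and your integrability checks (finiteness of $\int_{\mathbb{R}}|u|\,\nu_{bg}(du)$, boundedness of $\phi_{bg}$, rapid decay of $\widehat{f}$) are exactly what is needed to license the Fubini steps that the paper performs without comment.
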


\begin{proof}
 Taking logarithms on both sides of (\ref{e1}), and differentiating with respect to $z$, we have
	\begin{equation}\label{PP2:e4}
	\phi^{\prime}_{bg}(z)=i\phi_{bg}(z)\int_{\mathbb{R}}ue^{izu}\nu_{bg}(du).
	\end{equation}
	
	\noindent
	Let $h_X(x)$ denote density of $X$. Then
	\begin{equation}\label{PP2:e5}
	\phi_{bg}(z)=\displaystyle\int_{\mathbb{R}}e^{izx}h_X(x)dx~~\text{and}~~\phi^{\prime}_{bg}(z)=i\displaystyle\int_{\mathbb{R}}xe^{izx}h_X(x)dx.
	\end{equation}
	
	\noindent
	Substituting \eqref{PP2:e5} into  \eqref{PP2:e4} and rearranging the integrals, we have
	\begin{align}	
	\displaystyle\int_{\mathbb{R}}xe^{izx}h_X(x)dx-\phi_{bg}(z)\int_{\mathbb{R}}ue^{izu}\nu_{bg}(du)=0\label{PP2:e6}
	\end{align}
	\noindent
	The second integral of \eqref{PP2:e6} can be written as
	\begin{align}
	\nonumber \left(\int_{\mathbb R}ue^{izu}\nu_{bg}(du)\right)\phi_{bg}(z)&=\int_{\mathbb R}\int_{\mathbb R}ue^{izu}e^{izx}h_X(x)dx\nu_{bg}(du)\\
	\nonumber &=\int_{\mathbb R}\int_{\mathbb R}ue^{iz(u+x)}\nu_{bg}(du)h_X(x)dx\\
	\nonumber &=\int_{\mathbb R}\int_{\mathbb R}ue^{izy}\nu_{bg}(du)h_X(y-u)dy\\
	&=\int_{\mathbb R}e^{izx}\int_{\mathbb R}uh_X(x-u)\nu_{bg}(du)dx.\label{PP2:e7}
	\end{align}
	\noindent
	Substituting \eqref{PP2:e7} into \eqref{PP2:e6}, we have
	\begin{align}
	\nonumber	0&=\displaystyle\int_{\mathbb{R}}xe^{izx}h_X(x)dx-\int_{\mathbb R}e^{izx}\int_{\mathbb R}uh_{X}(x-u)\nu_{bg}(du)dx\\
	&=\displaystyle\int_{\mathbb{R}}e^{izx} \left( xh_X(x)-\int_{\mathbb R}uh_X(x-u)\nu_{bg}(du) \right)dx. \label{PP2:e8}
	\end{align}
	
	\noindent
	Applying FT to \eqref{PP2:e8}, multiplying with $f\in \mathcal{S}(\mathbb{R}),$ and integrating over $\mathbb{R},$ we get

	\begin{align}\label{PP2:e9}
	\displaystyle\int_{\mathbb{R}}f(x) \left( xh_X(x)-\int_{\mathbb R}uh_X(x-u)\nu_{bg}(du) \right)dx=0.
	\end{align}
	
	\noindent
	The second integral of \eqref{PP2:e9} can be seen as

	\begin{align}
	\nonumber \int_{\mathbb R}\int_{\mathbb R}uf(x)h_X(x-u)\nu_{bg}(du)dx&=\int_{\mathbb R}\int_{\mathbb R}uf(y+u)h_X(y)\nu_{bg}(du)dy\\ 
	&=\mathbb{E}\left(\int_{\mathbb R}f(X+u)u\nu_{bg}(du)\right).\label{PP2:e10}
	\end{align}
	\noindent
	Substituting \eqref{PP2:e10} in \eqref{PP2:e9}, we have
	\begin{align*}
	\mathbb{E}\left(Xf(X)-\displaystyle\int_{\mathbb{R}}f(X+u)u\nu_{bg}(du) \right)=0.
	\end{align*}
	\noindent
     This proves the result.
\end{proof}
\noindent
Note that for any $f \in \mathcal{S}(\mathbb{R})$,
\begin{align}\label{bisop}
	\mathcal{A}f(x):=-xf(x)+\displaystyle\int_{\mathbb{R}}f(x+u)u\nu_{bg}(du),
\end{align}
\noindent 
is a Stein operator for $BG$ distribution. Also, a Stein equation for $BG(\alpha_1,p_1,\alpha_2,p_2)$ distribution is given by
\begin{equation}\label{PP2:e15}
\mathcal{A}f(x)=-xf(x)+\displaystyle\int_{\mathbb{R}}f(x+u)u\nu_{bg}(du)= h(x)-\mathbb{E}(h(X)),
\end{equation}
\noindent
where $h\in \mathcal{M}$, a class of test functions. We now apply the semigroup approach to find $f_h$ that satisfies the integral equation \eqref{PP2:e15} for a given $h\in \mathcal{M}$. This approach for solving Stein equations is developed in Barbour \cite{k8} and Arras and Houdr\'e \cite{k0} generalized it for infinitely divisible distributions with the finite first moment. Henceforth, we consider $\mathcal{F}=\overline{\mathcal{S}}(\mathbb{R})$, the closure of $\mathcal{S}(\mathbb{{R}})$. Following Barbour's approach \cite{k8}, we choose a family of operators $(P_{t})_{t\geq0}$, for all $x\in\mathbb{R}$, as

\begin{equation}\label{PP2:e16}
P_{t}f(x):=\frac{1}{2\pi}\int_{\mathbb{R}}\widehat{f}(y)e^{iyxe^{-t} }\phi_t(y)dy, ~~f\in\mathcal{F},
\end{equation}
\noindent 
where $\hat{f}$ is the FT of $f$ and
\begin{align}\label{nPP2:a15}
\phi_t(y):=\frac{\phi_{bg}(y)}{\phi_{bg}(e^{-t}y)},~y\in \mathbb{{R}}.
\end{align}
\noindent
Since $BG$ distributions are self-decomposable (see \cite[p.263]{bilateral0}), $\phi_{t}$ is also a cf \cite[p.90]{sato} of some rv $X_{(t)}$, say. That is, for all $y\in \mathbb{R},$ and $t\geq 0,$
\begin{align}\label{PP2:a15}
\phi_t(y)=\displaystyle\int_{\mathbb{R}}e^{iy u}F_{X_{(t)}}(du),
\end{align}
\noindent
where $F_{X_{(t)}}$ is the law of $X_{(t)}$. Using \eqref{PP2:a15}, we get
\begin{align}
\nonumber  P_tf(x)&=\frac{1}{2\pi}\int_{\mathbb R}\int_{\mathbb{R}}\widehat{f}(y)e^{iy xe^{-t}}e^{iy u}F_{X_{(t)}}(du)dy\\
\nonumber  &=\frac{1}{2\pi}\int_{\mathbb R}\int_{\mathbb{R}}\widehat{f}(y)e^{iy (u+xe^{-t})}F_{X_{(t)}}(du)dy\\
&=\displaystyle\int_{\mathbb{R}}f(u+xe^{-t})F_{X_{(t)}}(du),\label{PP2:a17}
\end{align}
\noindent
where the last step follows by inverse FT (see Section \ref{PP2:FS}).

\begin{pro}\label{PP2:proSem}
	Let $(P_{t})_{t\geq0}$ be a family of operators defined in \eqref{PP2:e16}. Then
	\begin{itemize}
		\item [(i)] $(P_{t})_{t\geq 0}$ is a $\mathbb{C}_0$-semigroup on $\mathcal{F}$.
		\item [(ii)] Its generator $L$ is given by
		\begin{align}
		\nonumber Lf(x)&=-xf^{\prime}(x)+\displaystyle\int_{\mathbb R}f^{\prime}(x+u)u\nu_{bg}(du),~~f\in\mathcal{S}(\mathbb{R})\\
		\label{e7}&=\mathcal{A}f^{\prime}(x),
		\end{align}	
		
		\noindent
		where $\mathcal{A}$ is defined in \eqref{bisop}.
	\end{itemize}
\end{pro}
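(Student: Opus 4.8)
The plan is to verify the two claimed properties of the operator family $(P_t)_{t\geq 0}$ directly from the integral representation \eqref{PP2:a17}, namely $P_tf(x)=\int_{\mathbb{R}}f(u+xe^{-t})F_{X_{(t)}}(du)$, which identifies $P_t$ as an expectation operator $P_tf(x)=\mathbb{E}[f(xe^{-t}+X_{(t)})]$ against the law of the self-decomposability remainder $X_{(t)}$.

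For part (i), I would first establish the semigroup property $P_{s+t}=P_s P_t$ together with $P_0=\mathrm{id}$. The identity $P_0=\mathrm{id}$ is immediate from \eqref{nPP2:a15}, since $\phi_0(y)=\phi_{bg}(y)/\phi_{bg}(y)=1$, so $F_{X_{(0)}}=\delta_0$. For the composition law, the cleanest route is through the characteristic-function identity \eqref{nPP2:a15}: I would compute $\phi_{s+t}(y)=\phi_{bg}(y)/\phi_{bg}(e^{-(s+t)}y)$ and factor it as $\phi_s(y)\,\phi_t(e^{-s}y)$, which at the level of distributions says that $X_{(s+t)}$ equals $X_{(s)}$ convolved with $e^{-s}X_{(t)}$ (both independent). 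Substituting this factorization into \eqref{PP2:a17} and using Fubini gives $P_{s+t}f(x)=P_s(P_tf)(x)$ after a change of variables absorbing the $e^{-s}$ scaling. The $\mathbb{C}_0$ (strong continuity) requirement then amounts to showing $\|P_tf-f\|\to 0$ as $t\downarrow 0$ for $f\in\mathcal{F}=\overline{\mathcal{S}}(\mathbb{R})$; since $X_{(t)}\Rightarrow 0$ and $xe^{-t}\to x$, this follows from dominated convergence using uniform continuity of $f$ on the dense class $\mathcal{S}(\mathbb{R})$ and the contraction bound $\|P_tf\|\leq\|f\|$, extended to the closure by a standard $3\varepsilon$ argument.

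For part (ii), I would compute the generator $Lf=\lim_{t\downarrow 0}\frac{1}{t}(P_tf-f)$ by differentiating the Fourier representation \eqref{PP2:e16} in $t$ at $t=0$. Writing $P_tf(x)=\frac{1}{2\pi}\int_{\mathbb{R}}\widehat{f}(y)e^{iyxe^{-t}}\phi_t(y)\,dy$, I would bring $\frac{d}{dt}\big|_{t=0}$ inside the integral (justified by the Schwartz decay of $\widehat{f}$ and the differentiability of $\phi_t$ in $t$). Two terms appear: the derivative of $e^{iyxe^{-t}}$ contributes $-iyx$, and the derivative of $\phi_t(y)$ at $t=0$ is obtained by differentiating $\log\phi_t(y)=\log\phi_{bg}(y)-\log\phi_{bg}(e^{-t}y)$, which by \eqref{PP2:e4} yields a factor matching $\int_{\mathbb{R}}iyu\,e^{iyu}\nu_{bg}(du)$-type expression. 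Translating the $-iyx$ term back through the inverse Fourier transform produces $-xf'(x)$, and the $\phi_t$-term produces the integral $\int_{\mathbb{R}}f'(x+u)u\,\nu_{bg}(du)$, giving exactly $Lf(x)=\mathcal{A}f'(x)$ with $\mathcal{A}$ as in \eqref{bisop}.

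The main obstacle I anticipate is the rigorous justification of differentiating under the integral sign in part (ii) and of the limit interchanges in part (i), because the Lévy measure $\nu_{bg}$ in \eqref{bglevymeasure} has a non-integrable $1/u$ singularity at the origin. The integral $\int_{\mathbb{R}}f'(x+u)u\,\nu_{bg}(du)$ is well-defined since the factor $u$ cancels the singularity, but when differentiating $\phi_t$ one must keep careful track of the compensator to ensure the $t$-derivative of $\log\phi_{bg}(e^{-t}y)$ converges and interacts correctly with the $e^{izu}-1$ structure in \eqref{e1}; establishing the needed uniform integrability so that the generator is well-defined on the core $\mathcal{S}(\mathbb{R})$, and verifying that this core is dense and invariant, is where the technical work concentrates.
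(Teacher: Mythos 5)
Your proposal is correct and follows essentially the same route as the paper: both rest on the cocycle identity $\phi_{t+s}(y)=\phi_{s}(y)\phi_{t}(ye^{-s})$ from \eqref{nPP2:a15} for the semigroup law, and both obtain the generator by differentiating the Fourier representation \eqref{PP2:e16} at $t=0$ and using \eqref{PP2:e4} before inverting the transform. The only differences are cosmetic or supplementary: you verify $P_{t+s}=P_tP_s$ probabilistically, by reading the factorization as a convolution of the laws $F_{X_{(t)}}$ and applying Fubini to \eqref{PP2:a17}, where the paper performs a direct Fourier computation with a Dirac delta, and you explicitly address the strong-continuity part of the $\mathbb{C}_0$ claim, which the paper passes over after checking $P_0f=f$.
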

\begin{proof}
	(i) First note that $\lim_{t\to 0} \phi_{t}(y)=1$ and $\lim_{t\to\infty} \phi_{t}(y)=\phi_{bg}(y).$ Then it can be shown that $P_{0}f(x)=f(x) ~~\text{and}~~ \lim_{t\to\infty}P_{t}(f)(x)=\displaystyle\int_{\mathbb R}f(x)F_{X}(dx)=\mathbb{E}(f(X))$. 
	\noindent
	Now, for any $s,t\geq0$, we have
	\begin{align}\label{PP2:a22}
	\phi_{t+s}(y)=\frac{\phi_{bg}(y)}{\phi_{bg}(e^{-(t+s)}y)}
	=\frac{\phi_{bg}(y)}{\phi_{bg}(e^{-s}y)}\frac{\phi_{bg}(e^{-s}y)}{\phi_{bg}(e^{-(t+s)}y)}
	=\phi_{s}(y)\phi_{t}(ye^{-s}).
	\end{align}
	\noindent
	So, from \eqref{PP2:e16}, we have
	\begin{align}
	\nonumber	P_{t+s}(f)(x)&=\frac{1}{2\pi}\int_{\mathbb R}\widehat{f}(y)e^{iy xe^{-(t+s)}}\phi_{t+s}(y)dy\\
	\label{PP2:a022}	&=\frac{1}{2\pi}\int_{\mathbb R}\widehat{f}(y)e^{iy xe^{-(t+s)}}\phi_{s}(y)\phi_{t}(ye^{-s}) dy.
	\end{align}
	\noindent
	Let $\delta$ denote the Dirac $\delta$-function defined by
	$$\delta(x-a)=\frac{1}{2 \pi} \displaystyle\int_{\mathbb{R}}e^{i(x-a)y} dy, ~x,a\in\mathbb{{R}}.$$
	\noindent
	Then
	\begin{align*}
P_{t}(P_{s}(f))(x)&=\frac{1}{2\pi}\int_{\mathbb R}\widehat{P_{s}}(f)(y)e^{iy xe^{-t}}\phi_{t}(y)dy\\
	&=\frac{1}{2\pi}\int_{\mathbb R}\left(\int_{\mathbb{R}}e^{-ivy}P_{s}(f)(v)dv  \right)e^{iy xe^{-t}}\phi_{t}(y)dy\\
	&=\frac{1}{(2\pi)^{2}}\int_{\mathbb R}\left(\int_{\mathbb{R}}e^{-ivy}\left(\int_{\mathbb{R}}\widehat{f}(w)e^{iwe^{-s}v}\phi_{s}(w)dw  \right)dv  \right)e^{iy xe^{-t}}\phi_{t}(y)dy\\
	&=\frac{1}{(2\pi)^{2}}\int_{\mathbb R}\widehat{f}(w)\phi_{s}(w)\int_{\mathbb R}e^{iy xe^{-t}}\phi_{t}(y) \left(\int_{\mathbb R}e^{iv(we^{-s}-y)}dv  \right)dy dw\\
	&=\frac{1}{(2\pi)^{2}}\int_{\mathbb R}\widehat{f}(w)\phi_{s}(w)\int_{\mathbb R}e^{iy xe^{-t}}\phi_{t}(y) 2\pi \delta (we^{-s} -y) dy dw\\
	&=\frac{1}{2\pi}\int_{\mathbb R}\widehat{f}(w)\phi_{s}(w)e^{i e^{-s}wxe^{-t}}\phi_{t}(we^{-s})dw\\
	&=\frac{1}{2\pi}\int_{\mathbb R}\widehat{f}(w)e^{iwx e^{-(t+s)}}\phi_{t+s}(w) dw\\
	&=P_{t+s}(f)(x).
	\end{align*}
	\noindent
	(ii) 	For $f\in \mathcal{S}(\mathbb{R}),$
	\begin{align}
	\nonumber Lf(x)&=\lim_{t\to0^{+}}\frac{1}{t}\bigg(P_{t}f(x) -f(x)   \bigg)\\
	\nonumber &=\lim_{t\to0^{+}}\frac{1}{t} \bigg(\frac{1}{2\pi} \displaystyle\int_{\mathbb{R}}\widehat{f}(y)e^{iyxe^{-t}}\phi_{t}(y)dy- \frac{1}{2\pi} \displaystyle\int_{\mathbb{R}}\widehat{f}(y)e^{iyx}dy\bigg)\\
	 &=\frac{1}{2\pi}\int_{\mathbb R}\widehat{f}(y)e^{iy x} \lim_{t\to0^{+}}\frac{1}{t}\bigg(e^{iy x(e^{-t}-1)}\phi_{t}(y)-1\bigg)dy, \label{bgnlb0}
	\end{align}
	\noindent
	 by dominated convergence theorem. Now by L'Hospital rule and \eqref{PP2:e4}, we get
	\begin{align}
	\nonumber	\lim_{t\to0^{+}}\frac{1}{t}\big(e^{iy x(e^{-t}-1)}\phi_{t}(y)-1\big)&=-iyx+\lim_{t\to0^{+}} \phi_{t}^{\prime}(y)\\
	\nonumber&=-iyx+\frac{y \phi_{bg}^{\prime}(y)}{\phi_{bg}(y)}\\
		&=-iyx+iy\displaystyle\int_{\mathbb{R}}e^{iy u }u\nu_{bg}(du). \label{bgnlb1}
	\end{align}
\noindent
Using \eqref{bgnlb1} in \eqref{bgnlb0}, we get
\begin{align}
\nonumber	Lf(x)& = \frac{1}{2\pi}\int_{\mathbb R}\widehat{f}(y)e^{iy x}(iy)\left(-x+ \displaystyle\int_{\mathbb{R}}e^{iy u }u\nu_{bg}(du) \right )dy\\
	\nonumber&=-xf^{\prime}(x)+\int_{\mathbb{R}}f^{\prime}(x+u)u\nu_{bg}(du),
	\end{align}
	\noindent
	where the last equality follows by inverse FT.	
\end{proof}
\noindent
Next, we provide the solution of our Stein equation \eqref{PP2:e15}.  
\begin{thm}\label{thmsol}
	Let $X \sim BG(\alpha_1,p_1,\alpha_2,p_2)$ and $h\in\mathcal{W}_{r}$, defined in \eqref{fs1}. Then the function $f_{h}:\mathbb{R}\to \mathbb{R}$ defined by 
	\begin{equation}\label{PP2:SolSe}
	f_{h}(x):=-\displaystyle
	\int_{0}^{\infty}\frac{d}{dx}P_{t}h(x)dt,
	\end{equation}
	solves \eqref{PP2:e15}.	
\end{thm}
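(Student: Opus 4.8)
The plan is to follow the standard semigroup recipe: realise $f_{h}$ as the spatial derivative of a function solving the \emph{integrated} Stein equation $Lg = h - \mathbb{E}(h(X))$, and then pass from $L$ to $\mathcal{A}$ via the generator identity $Lg = \mathcal{A}g'$ of Proposition \ref{PP2:proSem}(ii). Concretely, I would set
\begin{equation*}
g(x) := -\int_{0}^{\infty}\bigl(P_{t}h(x)-\mathbb{E}(h(X))\bigr)\,dt,
\end{equation*}
note that differentiating in $x$ kills the constant $\mathbb{E}(h(X))$ so that $g'(x)=-\int_{0}^{\infty}\frac{d}{dx}P_{t}h(x)\,dt=f_{h}(x)$, and then reduce the theorem to the two claims that $g$ is well defined and that $Lg = h-\mathbb{E}(h(X))$; granting these, $\mathcal{A}f_{h}=\mathcal{A}g'=Lg=h-\mathbb{E}(h(X))$, which is \eqref{PP2:e15}.

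First I would quantify the decay of $P_{t}h(x)-\mathbb{E}(h(X))$ as $t\to\infty$. Using self-decomposability of the $BG$ law, which underlies \eqref{nPP2:a15}, one has $X\stackrel{d}{=}e^{-t}X'+X_{(t)}$ with $X'$ an independent copy of $X$, so that from \eqref{PP2:a17}
\begin{equation*}
P_{t}h(x)-\mathbb{E}(h(X))=\int_{\mathbb{R}}\Bigl(h(xe^{-t}+u)-\mathbb{E}\,h(e^{-t}X'+u)\Bigr)F_{X_{(t)}}(du).
\end{equation*}
Since every $h\in\mathcal{W}_{r}$ satisfies $\|h'\|\leq1$ and is therefore $1$-Lipschitz, the integrand is bounded in modulus by $e^{-t}\mathbb{E}|x-X'|$, whence
\begin{equation*}
\bigl|P_{t}h(x)-\mathbb{E}(h(X))\bigr|\leq e^{-t}\bigl(|x|+\mathbb{E}|X|\bigr),
\end{equation*}
which is integrable in $t$ and locally uniform in $x$ because $BG$ laws have finite first moment; this makes $g$ well defined. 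The same bound applied to $h'$, together with the commutation relation $\frac{d}{dx}P_{t}h=e^{-t}P_{t}h'$ read off from \eqref{PP2:a17}, gives $\bigl|\frac{d}{dx}P_{t}h(x)\bigr|\leq e^{-t}$, justifying differentiation under the integral sign and hence $g'=f_{h}$.

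Next I would evaluate $Lg$. Because $(P_{t})_{t\geq0}$ is a $\mathbb{C}_{0}$-semigroup preserving constants (Proposition \ref{PP2:proSem}(i)), for $s>0$ the semigroup property collapses the difference quotient to
\begin{equation*}
\frac{P_{s}g(x)-g(x)}{s}=\frac{1}{s}\int_{0}^{s}\bigl(P_{t}h(x)-\mathbb{E}(h(X))\bigr)\,dt\xrightarrow[s\to0^{+}]{}P_{0}h(x)-\mathbb{E}(h(X))=h(x)-\mathbb{E}(h(X)),
\end{equation*}
using $P_{0}h=h$ and the continuity established in Proposition \ref{PP2:proSem}. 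This shows $g$ lies in the domain of $L$ with $Lg=h-\mathbb{E}(h(X))$; equivalently one may interchange $L$ with $\int_{0}^{\infty}dt$ and use $LP_{t}h=\frac{d}{dt}P_{t}h$ together with $\lim_{t\to\infty}P_{t}h=\mathbb{E}(h(X))$ to telescope the integral to $h-\mathbb{E}(h(X))$.

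The step I expect to require the most care is passing from $Lg=h-\mathbb{E}(h(X))$ to $\mathcal{A}f_{h}=h-\mathbb{E}(h(X))$, that is, verifying that the generator identity $Lg=\mathcal{A}g'$ is licit for the constructed $g$ rather than merely for $f\in\mathcal{S}(\mathbb{R})$, where Proposition \ref{PP2:proSem}(ii) proves it. Since $\mathcal{A}$ couples differentiation in $x$ with integration against the L\'evy measure $\nu_{bg}$, which carries a $1/u$ singularity at the origin, I would need the exponential estimates above to propagate to $g'$ and its increments so as to control $\int_{\mathbb{R}}g'(x+u)u\,\nu_{bg}(du)$. I expect this to follow from the smoothness and boundedness inherited by $g$ from $h$ through the semigroup and from the finiteness of the relevant moments of $\nu_{bg}$, but it is the point where the argument must be made precise.
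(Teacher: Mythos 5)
Your proposal is correct and follows essentially the same route as the paper: define $g_{h}(x)=-\int_{0}^{\infty}\bigl(P_{t}h(x)-\mathbb{E}h(X)\bigr)\,dt$, observe $g_{h}'=f_{h}$, identify $Lg_{h}=h-\mathbb{E}h(X)$ by the semigroup fundamental theorem, and conclude via the generator identity $Lg=\mathcal{A}g'$ from Proposition \ref{PP2:proSem}(ii). Your additional estimates (the $e^{-t}$ decay making $g_{h}$ well defined, the commutation $\frac{d}{dx}P_{t}h=e^{-t}P_{t}h'$, and the flagged domain issue for applying the generator identity to $g_{h}$ rather than to a Schwartz function) are sound and in fact supply justifications that the paper's proof leaves implicit.
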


\begin{proof}
	Let

	 $$g_{h}(x)=-\displaystyle\int_{0}^{\infty}\bigg(P_{t}(h)(x)-\mathbb{E}h(X)  \bigg)dt .$$

	\noindent
	Then $g_{h}^{\prime}(x)=f_{h}(x).$ Also from \eqref{e7}, we get
	\begin{align}
	\nonumber\mathcal{A}f_{h}(x)&=-xf_{h}(x)+\int_{\mathbb{R}} f_{h}(x+u)u\nu_{bg} (du)\\
	\nonumber&=Lg_{h}(x) \\
	\nonumber&=-\displaystyle\int_{0}^{\infty}LP_{t}(h)(x)dt\\
	\nonumber&=-\displaystyle\int_{0}^{\infty}\frac{d}{dt}P_{t}h(x)dt\text{ (see \cite[p.68]{nourdin})}\\
	\nonumber&=P_{0}h(x)-P_{\infty}h(x)\\
	\nonumber&=h(x)-\mathbb{E}h(X)~(\text{by Proposition \ref{PP2:proSem}}).
	\end{align}
	\noindent
	Hence, $f_{h}$ is the solution to \eqref{PP2:e15}. 	
\end{proof}

\noindent
Next, we obtain some regularity of $f_{h}$. 
\begin{lem}\label{th3}
	For $h\in\mathcal{W}_{k+1}$, let $f_{h}$ be defined in \eqref{PP2:SolSe}. Then
	\begin{align}\label{PP2:pr1}
	\|f_{h}^{(k)}\|\leq \frac{1}{k+1} ,~k\geq 0,
	\end{align}	
	where $f^{(k)}$, $k\geq 1$, denotes the $k$th derivative of $f$ with $f^{(0)}=f$. 
\end{lem}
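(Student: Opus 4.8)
The plan is to work directly from the explicit representation \eqref{PP2:a17} of the semigroup, namely $P_{t}h(x)=\int_{\mathbb{R}}h(u+xe^{-t})F_{X_{(t)}}(du)$, in which $F_{X_{(t)}}$ is a genuine probability law on $\mathbb{R}$. The two facts I would exploit are: (a) differentiating $P_t h$ in $x$ pulls out a factor $e^{-t}$ through the chain rule while producing the same semigroup expression with $h$ replaced by its derivative; and (b) $P_t$ is a contraction for the supremum norm, since integrating against a probability measure cannot increase $\|\cdot\|$.

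First I would establish the differentiation rule. Differentiating \eqref{PP2:a17} under the integral sign $k+1$ times with respect to $x$, and using that $h\in\mathcal{W}_{k+1}$ is $(k+1)$-times differentiable, gives
$$\frac{d^{k+1}}{dx^{k+1}}P_{t}h(x)=e^{-(k+1)t}\int_{\mathbb{R}}h^{(k+1)}(u+xe^{-t})F_{X_{(t)}}(du)=e^{-(k+1)t}P_{t}h^{(k+1)}(x).$$
Since $F_{X_{(t)}}$ has total mass one, $\|P_{t}g\|\leq\|g\|$ for every bounded $g$; in particular $\|P_{t}h^{(k+1)}\|\leq\|h^{(k+1)}\|\leq 1$, the last inequality because $h\in\mathcal{W}_{k+1}$.

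Next, recalling that $f_{h}(x)=-\int_{0}^{\infty}\frac{d}{dx}P_{t}h(x)\,dt$, I would differentiate $k$ further times in $x$ and interchange the $x$-derivatives with the $t$-integral to obtain
$$f_{h}^{(k)}(x)=-\int_{0}^{\infty}\frac{d^{k+1}}{dx^{k+1}}P_{t}h(x)\,dt=-\int_{0}^{\infty}e^{-(k+1)t}P_{t}h^{(k+1)}(x)\,dt.$$
Taking absolute values and applying the contraction bound yields
$$\|f_{h}^{(k)}\|\leq\int_{0}^{\infty}e^{-(k+1)t}\|h^{(k+1)}\|\,dt\leq\int_{0}^{\infty}e^{-(k+1)t}\,dt=\frac{1}{k+1},$$
which is exactly \eqref{PP2:pr1}.

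The only genuine subtlety, and hence the main obstacle, is the rigorous justification of the two interchanges of limit and integral: differentiation under the probability-measure integral $F_{X_{(t)}}(du)$, and the interchange of $\frac{d^{k}}{dx^{k}}$ with $\int_{0}^{\infty}dt$. Both are handled by dominated convergence, since the $(k+1)$-th $x$-derivative of the integrand is bounded in modulus by $e^{-(k+1)t}\|h^{(k+1)}\|\leq e^{-(k+1)t}$ uniformly in $x$ and $u$, and $e^{-(k+1)t}$ is integrable on $(0,\infty)$ for every $k\geq 0$; this simultaneously guarantees that the integral defining $f_h^{(k)}$ converges and that the differentiations may be carried inside.
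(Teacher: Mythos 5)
Your argument is correct and coincides with the paper's own proof: both differentiate the representation $P_{t}h(x)=\int_{\mathbb{R}}h(u+xe^{-t})F_{X_{(t)}}(du)$ to pull out the factor $e^{-(k+1)t}$, bound the probability-measure integral by $\|h^{(k+1)}\|\leq 1$, and integrate $e^{-(k+1)t}$ over $(0,\infty)$ to get $\tfrac{1}{k+1}$. The paper presents the cases $k=0,1$ explicitly and appeals to induction, while you write the general $k$ in closed form and spell out the dominated-convergence justification the paper leaves implicit; the substance is the same.
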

\begin{proof}
	For $h\in \mathcal{W}_{k+1}$,
	\begin{align*}
	\|f_h\|&=\sup_{x\in\mathbb{R}} \left|-\displaystyle
	\int_{0}^{\infty}\frac{d}{dx}P_{t}h(x)dt\right|\\
	&=\sup_{x\in\mathbb{R}} \left| -\displaystyle
	\int_{0}^{\infty}e^{-t}\int_{\mathbb{R}}h^{(1)}(xe^{-t}+y)F_{X_{(t)}}(dy)dt \right|\\
	&\leq \|h^{(1)}\| \left|\int_{0}^{\infty}e^{-t}dt \right|\\	
	&= \|h^{(1)}\|\\
	&\leq 1.
	\end{align*}
	\noindent
	Since $f_{h}$ is $k$-times differentiable, we have 
	\begin{align*}
	\|f_h^{(1)}\|&=\sup_{x\in\mathbb{R}} \left| -\displaystyle
	\int_{0}^{\infty}e^{-2t}\int_{\mathbb{R}}h^{(2)}(xe^{-t}+y)F_{X_{(t)}}(dy)dt \right|\\
	&\leq \|h^{(2)}\| \left|\int_{0}^{\infty}e^{-2t}dt \right|\\	
	&= \frac{\|h^{(2)}\|}{2}\\
	&\leq \frac{1}{2}.
	\end{align*}
	\noindent
	 Indeed, it follows by induction that
	$$\|f_{h}^{(k)}\|\leq \frac{1}{k+1},~k\geq 1.$$
	\noindent
	This proves the result.
\end{proof}

\section{Bilateral gamma approximation on Wiener space}\label{BSBGD}
\noindent
In this section, we obtain error bounds for $BG$ approximation of functionals of an isonormal Gaussian process $\{Y(h): h\in \mathcal{H} \}$, for the smooth Wasserstein distance $d_r$. The error bounds for $VG$, $SVG$, Laplace, gamma and normal approximations follow as corollaries. Moreover, we give some numerical comparisons of the constants involved in the error bounds between our results and the existing results given by Gaunt \cite{kk2}. First, we need the following integration by parts formulas. Similar type of formulas can also be found in the proof of Theorem 4.1 of \cite{key2}. 

\begin{lem}\label{ibp}
	Let $G\in \mathbb{D}^{2,4}$ and $f:\mathbb{{R}} \to \mathbb{{R}}$ be a twice differentiable function having bounded first and second derivatives. Then,
	\begin{align}
	(i)~\mathbb{E} \left(Gf^{\prime}(G)   \right)=\mathbb{E}(G)\mathbb{E}(f^{\prime}(G))+\mathbb{E}\left(f^{\prime \prime} (G) \Gamma_{2}(G) \right), \text{	and}
	\end{align}
	\begin{align}\label{msidentity1}
	(ii)~\mathbb{E} \left(Gf(G)   \right)=\mathbb{E}(G)\mathbb{E}(f(G))+\mathbb{E}(f^{\prime}(G))\mathbb{E}(\Gamma_{2}(G))+\mathbb{E}\left(f^{\prime\prime}(G) \Gamma_{3}(G) \right),
	\end{align}
	
	where $\Gamma_{j},~j\geq 1,$ is defined as in \eqref{gamop}.
\end{lem}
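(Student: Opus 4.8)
The plan is to derive both identities directly from the Malliavin integration by parts formulas already recorded in the preliminaries, namely the general formula \eqref{intbypfo} and its diagonal specialization \eqref{ibp00}, combined with the defining recursion \eqref{gamop} for the $\Gamma$-operators. No new machinery is needed; the content is a careful chaining of these two identities.

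For part (i), I would simply invoke \eqref{ibp00} with $f$ replaced by $f'$. Since $f$ is twice differentiable with bounded first and second derivatives, the function $f'$ is itself differentiable with bounded derivative $f''$, so the hypotheses of \eqref{ibp00} are satisfied (and $G\in\mathbb{D}^{2,4}\subset\mathbb{D}^{1,2}$). Applying that formula to $f'$ yields $\mathbb{E}(Gf'(G))=\mathbb{E}(G)\mathbb{E}(f'(G))+\mathbb{E}((f')'(G)\Gamma_2(G))$, and since $(f')'=f''$ this is precisely the claimed identity.

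For part (ii), I would start from \eqref{ibp00} itself, which gives $\mathbb{E}(Gf(G))=\mathbb{E}(G)\mathbb{E}(f(G))+\mathbb{E}(f'(G)\Gamma_2(G))$, and then integrate by parts a second time on the residual term $\mathbb{E}(f'(G)\Gamma_2(G))$. The key move is to apply the general formula \eqref{intbypfo} with the random variable $\Gamma_2(G)$ playing the role of $G$, with $H=G$, and with $f'$ playing the role of the test function. This produces $\mathbb{E}(\Gamma_2(G)f'(G))=\mathbb{E}(\Gamma_2(G))\mathbb{E}(f'(G))+\mathbb{E}(f''(G)\langle DG,-DL^{-1}\Gamma_2(G)\rangle_{\mathcal{H}})$. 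By the definition \eqref{gamop}, the inner product $\langle DG,-DL^{-1}\Gamma_2(G)\rangle_{\mathcal{H}}$ is exactly $\Gamma_3(G)$, so the residual term rewrites as $\mathbb{E}(f'(G))\mathbb{E}(\Gamma_2(G))+\mathbb{E}(f''(G)\Gamma_3(G))$. Substituting this back into \eqref{ibp00} delivers the stated formula \eqref{msidentity1}.

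The main point requiring care, and the only real obstacle, is verifying that the hypotheses of \eqref{intbypfo} are genuinely met at the second integration by parts: that formula requires both of its random variable arguments to lie in $\mathbb{D}^{1,2}$. Here the assumption $G\in\mathbb{D}^{2,4}$ is precisely what one needs, since it guarantees that $\Gamma_2(G)=\langle DG,-DL^{-1}G\rangle_{\mathcal{H}}$ is well-defined in $\mathbb{D}^{1,2}$ (so that $-DL^{-1}\Gamma_2(G)$ exists and $\Gamma_3(G)\in L^1(\Omega)$), while $f'$ has bounded derivative $f''$ by hypothesis. I expect this integrability bookkeeping, namely confirming that the regularity $\mathbb{D}^{2,4}$ of $G$ pushes $\Gamma_2(G)$ into the domain $\mathbb{D}^{1,2}$, to be where the attention is needed, since the algebraic manipulation is otherwise a transparent two-step application of the integration by parts identities.
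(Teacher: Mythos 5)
Your proposal is correct and follows essentially the same route as the paper: part (i) is obtained exactly as you describe by substituting $f'$ into \eqref{ibp00}, and part (ii) by a second integration by parts on $\mathbb{E}(f'(G)\Gamma_2(G))$ that produces $\mathbb{E}(f'(G))\mathbb{E}(\Gamma_2(G))+\mathbb{E}(f''(G)\Gamma_3(G))$ via the recursion \eqref{gamop}. The only cosmetic difference is that the paper re-derives that second step from scratch (writing $\Gamma_2(G)-\mathbb{E}\Gamma_2(G)=LL^{-1}\Gamma_2(G)$ and using $\delta(D\cdot)=-L\cdot$, the duality formula \eqref{dualfo}, and the chain rule) rather than citing \eqref{intbypfo} with arguments $(\Gamma_2(G),G,f')$ as you do, but these are the same argument.
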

\begin{proof}
	(i) Recall from the integration by parts formula given in \eqref{ibp00}
	\begin{align}\label{ibp0}
	\mathbb{E}\left(f^{\prime} (G) \Gamma_{2}(G) \right)=\mathbb{E} \left(Gf(G)   \right)-\mathbb{E}(G)\mathbb{E}(f(G)).
	\end{align}
	\noindent
	Replacing $f$ by $f^{\prime}$ in \eqref{ibp0} and rearranging the terms, we get
	\begin{align}
	\mathbb{E} \left(Gf^{\prime}(G)   \right)=\mathbb{E}(G)\mathbb{E}(f^{\prime}(G))+\mathbb{E}\left(f^{\prime \prime} (G) \Gamma_{2}(G) \right).
	\end{align}
	\noindent
	(ii) Using the fact $LL^{-1}G=G-\mathbb{E}(G) $, we observe that
	\begin{align}
	\nonumber\mathbb{E}\bigg(\bigg(\Gamma_{2}(G)-\mathbb{E}(\Gamma_{2}(G))\bigg)f^{\prime}(G)  \bigg)&=\mathbb{E}\bigg(LL^{-1}\Gamma_{2}(G) f^{\prime}(G)   \bigg)\\
	\nonumber&=\mathbb{E} \bigg\{\delta \left(-DL^{-1}\Gamma_{2}(G)   \right) f^{\prime} (G)   \bigg\} \\
	\nonumber&\quad\quad\quad\quad\quad\quad \text{ (since $\delta(DG)=-LG$)}\\
	\nonumber&=\mathbb{E} \bigg( \langle Df^{\prime}(G), -DL^{-1}\Gamma_{2}(G) \rangle_{\mathcal{H}}    \bigg) \text{ (by \eqref{dualfo})}\\
	\nonumber&=\mathbb{E}\bigg(f^{\prime\prime}(G)\langle DG, -DL^{-1}\Gamma_{2}(G) \rangle_{\mathcal{H}}    \bigg) \text{ (by chain rule)}\\
	&=\mathbb{E}\bigg(f^{\prime\prime}(G) \Gamma_{3}(G)   \bigg),\label{ibp1}
	\end{align}
	which leads to
	\begin{align}
	\mathbb{E}\bigg(f^{\prime}(G)\Gamma_{2}(G)  \bigg)=\mathbb{E}\bigg(f^{\prime}(G)\bigg)\mathbb{E}\bigg(\Gamma_{2}(G)\bigg)+\mathbb{E}\bigg(f^{\prime\prime}(G) \Gamma_{3}(G)   \bigg)\label{ibp2}
	\end{align}
	\noindent
	Putting \eqref{ibp0} in \eqref{ibp2}, we arrive at the identity in \eqref{msidentity1}.
\end{proof}

\subsection{Bounds in terms of $\Gamma$-operators} Here, we establish our bounds, for the $d_3$ distance, in terms of $\Gamma$-operators. Our approach is new and exploits Stein equation based on integral operator, rather than the one based on differential forms.
\begin{thm}\label{bgapp:th1}
	Let $G \in \mathbb{D}^{2,4}$ and $X\sim BG(\alpha_1,p_1,\alpha_2,p_2)$, and $\Gamma_{j}$, $j\geq 1$, denote the $\Gamma$-operators defined in \eqref{gamop}.  Also, let $\alpha_1\alpha_2>1+|\alpha_1-\alpha_2|$,
	\begin{align}\label{alpi}
		\alpha_{12}=\frac{\alpha_1 \alpha_2}{\alpha_1\alpha_2-(1+|\alpha_1-\alpha_2|)} \text{ and } \alpha_{13}=\frac{1}{\alpha_1}-\frac{1}{\alpha_2}.
	\end{align}
	Then,
	\begin{align}
		\nonumber d_{{3}}(G,X) \leq&~ \frac{1}{3}\alpha_{12}\mathbb{E} \left|\frac{1}{\alpha_1 \alpha_2}G+ \alpha_{13}\Gamma_{2}(G)-\Gamma_{3}(G)   \right|\\
		&\quad\quad+\frac{1}{2} \alpha_{12} \left|\frac{p_1+p_2}{\alpha_1 \alpha_2}+\alpha_{13}\mathbb{E}(G)- \mathbb{E}\left(\Gamma_{2}(G) \right)  \right|+\alpha_{12} \bigg| \mathbb{E} (X)-\mathbb{E}(G)\bigg|.\label{sm6}
	\end{align}
\end{thm}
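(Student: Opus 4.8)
The plan is to bound $d_3(G,X)$ by $\sup_{h\in\mathcal{W}_3}\left|\mathbb{E}(\mathcal{A}f_h(G))\right|$, where $f_h$ is the Stein solution from Theorem \ref{thmsol} (so that $\mathbb{E}h(G)-\mathbb{E}h(X)=\mathbb{E}(\mathcal{A}f_h(G))$), and then to evaluate $\mathbb{E}(\mathcal{A}f_h(G))$ via an exact operator identity. Writing $g_f(x):=\int_{\mathbb{R}}f(x+u)u\,\nu_{bg}(du)$, so that $\mathcal{A}f=g_f-\mathrm{id}\cdot f$ by \eqref{bisop}, I would first split $g_f=T_1f-T_2f$ into the two exponential pieces of \eqref{bglevymeasure}, record the first-order relations $(T_1f)'=\alpha_1T_1f-p_1f$ and $(T_2f)'=-\alpha_2T_2f+p_2f$ obtained by differentiating under the integral, and eliminate $T_1f,T_2f$ by applying $(\alpha_1-D)(\alpha_2+D)$ (with $D=d/dx$). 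This produces the identity
\[
\alpha_1\alpha_2\,g_f=g_f''-(\alpha_1-\alpha_2)g_f'+(p_1\alpha_2-p_2\alpha_1)f+(p_1+p_2)f',
\]
valid for every sufficiently smooth $f$.

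Next I would substitute $g_f=\mathcal{A}f+\mathrm{id}\cdot f$, use the translation invariance $g_f'=g_{f'}$, $g_f''=g_{f''}$ to write $g_{f_h}''=\mathcal{A}f_h''+\mathrm{id}\cdot f_h''$ and $g_{f_h}'=\mathcal{A}f_h'+\mathrm{id}\cdot f_h'$, take expectations at $G$, and invoke Lemma \ref{ibp}: applying \eqref{msidentity1} to $\mathbb{E}(Gf_h(G))$ and part (i) to $\mathbb{E}(Gf_h'(G))$ turns the polynomial-in-$G$ terms into $\Gamma_2(G)$ and $\Gamma_3(G)$, while $\mathbb{E}(Gf_h''(G))$ is kept and supplies the $\tfrac{1}{\alpha_1\alpha_2}G$ term. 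Collecting the coefficients of $f_h''(G),f_h'(G),f_h(G)$ (using $\mathbb{E}(X)=\tfrac{p_1}{\alpha_1}-\tfrac{p_2}{\alpha_2}$ from \eqref{cuforbg} and $\alpha_{13}=-(\alpha_1-\alpha_2)/(\alpha_1\alpha_2)$) reproduces exactly $B_1=\tfrac{1}{\alpha_1\alpha_2}G+\alpha_{13}\Gamma_2(G)-\Gamma_3(G)$, $B_2=\tfrac{p_1+p_2}{\alpha_1\alpha_2}+\alpha_{13}\mathbb{E}(G)-\mathbb{E}(\Gamma_2(G))$, $B_3=\mathbb{E}(X)-\mathbb{E}(G)$, and leaves the master identity
\[
\mathbb{E}(\mathcal{A}f_h(G))=\mathbb{E}(f_h''(G)B_1)+\mathbb{E}(f_h'(G))B_2+\mathbb{E}(f_h(G))B_3+\tfrac{1}{\alpha_1\alpha_2}\mathbb{E}(\mathcal{A}f_h''(G))-\tfrac{\alpha_1-\alpha_2}{\alpha_1\alpha_2}\mathbb{E}(\mathcal{A}f_h'(G)).
\]
The decisive feature is that this identity holds verbatim with $f_h$ replaced by any derivative $f_h^{(k)}$, since its proof used only the operator identity and Lemma \ref{ibp}.

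I would then iterate. Setting $a_k=\mathbb{E}(\mathcal{A}f_h^{(k)}(G))$ and $S_k=\mathbb{E}(f_h^{(k+2)}(G)B_1)+\mathbb{E}(f_h^{(k+1)}(G))B_2+\mathbb{E}(f_h^{(k)}(G))B_3$, the master identity reads $a_k=S_k+q_1a_{k+2}+q_2a_{k+1}$ with $q_1=\tfrac{1}{\alpha_1\alpha_2}$, $q_2=-\tfrac{\alpha_1-\alpha_2}{\alpha_1\alpha_2}$. Unrolling the recursion writes $a_0=\sum_k c_kS_k$, where the total weight equals $\sum_k|c_k|=\sum_{n\ge0}(|q_1|+|q_2|)^{n}=\tfrac{1}{1-q}=\alpha_{12}$ with $q=|q_1|+|q_2|=\tfrac{1+|\alpha_1-\alpha_2|}{\alpha_1\alpha_2}$; this is exactly where the hypothesis $\alpha_1\alpha_2>1+|\alpha_1-\alpha_2|$ (that is $q<1$) enters and where $\alpha_{12}$ is produced. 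Bounding $|S_k|\le\|f_h^{(k+2)}\|\,\mathbb{E}|B_1|+\|f_h^{(k+1)}\|\,|B_2|+\|f_h^{(k)}\|\,|B_3|$ and using the monotone estimates $\|f_h^{(j)}\|\le\tfrac{1}{j+1}$ of Lemma \ref{th3} (so $\|f_h^{(k+2)}\|\le\tfrac13$, $\|f_h^{(k+1)}\|\le\tfrac12$, $\|f_h^{(k)}\|\le1$ for all $k\ge0$) gives $|S_k|\le\tfrac13\mathbb{E}|B_1|+\tfrac12|B_2|+|B_3|$ uniformly, whence $|a_0|\le\alpha_{12}\big(\tfrac13\mathbb{E}|B_1|+\tfrac12|B_2|+|B_3|\big)$, which is \eqref{sm6}.

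The main obstacle I anticipate is making the infinite iteration rigorous. One must show the tail of the unrolled recursion vanishes, i.e. $q^{N}\sup_k|a_k|\to0$, so a uniform a priori bound $\sup_k|a_k|<\infty$ is required; this follows from the crude estimate $|a_k|\le\|f_h^{(k)}\|\big(\mathbb{E}|G|+\int_{\mathbb{R}}|u|\,\nu_{bg}(du)\big)$ together with $G\in\mathbb{D}^{2,4}\subset L^1(\Omega)$. Because the scheme uses $f_h^{(k)}$ of every order, the cleanest route is to prove \eqref{sm6} first for smooth $h$ with $\|h^{(k)}\|\le1$ for all $k$ (a subclass dense enough to realize the $d_3$ supremum, and for which Lemma \ref{th3} delivers $\|f_h^{(k)}\|\le\tfrac{1}{k+1}$ for every $k$), and then pass to general $h\in\mathcal{W}_3$ by approximation. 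The regularity needed to apply Lemma \ref{ibp} at each step should also be checked, but this is routine once $G\in\mathbb{D}^{2,4}$ is assumed.
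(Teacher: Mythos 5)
Your operator identity is correct and is algebraically the same computation the paper performs: applying $(\alpha_1-D)(\alpha_2+D)$ to $g_f=T_1f-T_2f$ is exactly the paper's two rounds of integration by parts on the two exponential integrals, and it reproduces their master identity \eqref{tt0}; Lemma \ref{ibp} then yields the same $B_1,B_2,B_3$. Where you genuinely diverge is in removing the residuals $\mathbb{E}(\mathcal{A}f_h'(G))$ and $\mathbb{E}(\mathcal{A}f_h''(G))$. The paper treats each residual as a quantity of the form $\mathbb{E}h_i(G)-\mathbb{E}h_i(X)$ with $h_i\in\mathcal{W}_3$, so that after taking the supremum over $h$ the inequality closes in a single step, $d_3\le M+\bigl(|\alpha_{13}|+\tfrac{1}{\alpha_1\alpha_2}\bigr)d_3$, and $\alpha_{12}=(1-q)^{-1}$ arises from solving this linear inequality rather than from summing a geometric series. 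That route needs only $\|f_h\|,\|f_h'\|,\|f_h''\|$, i.e.\ three derivatives of $h$ (though, to be fair, the claim that the residuals are dominated by $d_3(G,X)$ is itself stated rather than proved in the paper).

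The gap in your version is the infinite unrolling. The recursion $a_k=S_k+q_1a_{k+2}+q_2a_{k+1}$ invokes $f_h^{(k)}$ for every $k$, hence derivatives of $h$ of every order; for a generic $h\in\mathcal{W}_3$ the function $f_h'''$ need not exist (its formula involves $h^{(4)}$), and Lemma \ref{th3} gives $\|f_h^{(k)}\|\le\tfrac{1}{k+1}$ only when $h\in\mathcal{W}_{k+1}$. So the scheme can only be run on the subclass of test functions with \emph{all} derivatives bounded by one. Your proposed repair --- prove the bound on that subclass and pass to $\mathcal{W}_3$ by approximation --- does not go through: mollifying $h\in\mathcal{W}_3$ preserves $\|h_\varepsilon^{(j)}\|\le1$ only for $j\le3$, while $\|h_\varepsilon^{(4)}\|$ is generically of order $\varepsilon^{-1}$, so the mollifications do not land in the smaller class, which is therefore not dense in $\mathcal{W}_3$ in the sense required; a supremum over it controls only a weaker metric than $d_3$. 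The fix is to abandon the iteration and close the recursion after one step as the paper does --- your master identity already sets this up --- at the cost of justifying that the two residual terms are bounded by $d_3(G,X)$.
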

\begin{proof}
	Recall the L\'evy measure $\nu_{bg}$ of $BG(\alpha_1,p_1,\alpha_2,p_2)$, given in \eqref{bglevymeasure}, is
	$$\nu_{bg}(du)=\left(\frac{p_1  }{u}e^{-\alpha_1u}\mathbf{1}_{(0,\infty)}(u)-\frac{p_2  }{u}e^{-\alpha_{2}|u|}\mathbf{1}_{(-\infty,0)}(u)\right)du.$$
	
	\noindent
	Also,
	$$\mathbb{E}(X)=\left(\frac{p_1}{\alpha_1}-\frac{p_2}{\alpha_2}   \right).$$
	 \noindent
	Let $(f,h)$ be the solution pair of \eqref{PP2:e15}. Then replacing $x$ by $G \in \mathbb{D}^{2,4}$ in the Stein equation and taking expectation, we get
	\begin{align}
	\nonumber	\mathbb{E}h(G)-\mathbb{E}h(X)=&\mathbb{E}\left(-Gf(G)+\displaystyle\int_{\mathbb{R}}f(G+u)u\nu_{bg}(du)  \right)\\
		\nonumber=&\mathbb{E} \bigg(-Gf(G)+p_1 \displaystyle\int_{0}^{\infty}f(G+u)e^{-\alpha_1 u}du\\
	\nonumber	&\quad\quad\quad\quad\quad\quad -  p_2 \displaystyle\int_{0}^{\infty}f(G-u)e^{-\alpha_2 u}du \bigg)\\
	\nonumber&\quad\quad\quad\quad\quad\quad\text{(splitting the L\'evy measure $\nu_{bg}$)}\\
		\nonumber=& \mathbb{E}\bigg( \left(  \left(\frac{p_1}{\alpha_1}-\frac{p_2}{\alpha_2}   \right) -G \right)f(G)\bigg)+\frac{p_1}{\alpha_1}\mathbb{E} \left(\displaystyle\int_{0}^{\infty}f^{\prime}(G+u)e^{-\alpha_1 u} du \right)\\
		\label{ibpf1}&\quad\quad+\frac{p_2}{\alpha_2}\mathbb{E} \left(\displaystyle\int_{0}^{\infty}f^{\prime}(G-u)e^{-\alpha_2 u} du \right), 
		\end{align}
		\noindent
		where the last equality follows by the integration by parts formula. Note that
		\begin{align}\label{ibpf2}
			\mathbb{E}\bigg(p_1 \int_{0}^{\infty}f^{\prime}(G+u)e^{-\alpha_1u}du-	p_2\int_{0}^{\infty}f^{\prime}(G-u)e^{-\alpha_2u}du\bigg)=\mathbb{E}\bigg(\displaystyle\int_{\mathbb R}uf^{\prime}(G+u)\nu_{bg}(du)\bigg).
		\end{align}
		\noindent
		Using \eqref{ibpf2} in \eqref{ibpf1}, we get
		\begin{align}
		\nonumber\mathbb{E}h(G)-\mathbb{E}h(X)=&\mathbb{E} \bigg(\left(  \mathbb{E}X -G \right)f(G)\bigg)+\left(\frac{1}{\alpha_1}-\frac{1}{\alpha_2} \right)\mathbb{E} \bigg( p_1 \int_{0}^{\infty}f^{\prime}(G+u)e^{-\alpha_1u}du\\
		\nonumber&\quad-	p_2\int_{0}^{\infty}f^{\prime}(G-u)e^{-\alpha_2u}du \bigg)+\frac{p_1}{\alpha_2}\mathbb{E} \left(\displaystyle\int_{0}^{\infty}f^{\prime}(G+u)e^{-\alpha_1 u} du \right)\\
		\nonumber&\quad\quad\quad\quad\quad\quad+\frac{p_2}{\alpha_1}\mathbb{E} \left(\displaystyle\int_{0}^{\infty}f^{\prime}(G-u)e^{-\alpha_2 u} du \right)\\
		\nonumber=&\mathbb{E} \bigg(\left(  \mathbb{E}X -G \right)f(G)\bigg)+\alpha_{13}\mathbb{E} \left( \int_{\mathbb{R}} f^{\prime}(G+u) u\nu_{bg}(du) \right)\\
		\nonumber&\quad\quad\quad+\frac{p_1}{\alpha_2}\mathbb{E} \left(\displaystyle\int_{0}^{\infty}f^{\prime}(G+u)e^{-\alpha_1 u} du \right)\\
			\label{ibpf3}&\quad\quad\quad\quad\quad\quad+\frac{p_2}{\alpha_1}\mathbb{E} \left(\displaystyle\int_{0}^{\infty}f^{\prime}(G-u)e^{-\alpha_2 u} du \right).
			\end{align}
		\noindent
		Applying integration by parts formula to the last two terms of \eqref{ibpf3}, we get	
			\begin{align}
	 \nonumber\mathbb{E}h(G)-\mathbb{E}h(X)=&\mathbb{E} \bigg(\left(  \mathbb{E}X -G \right)f(G)\bigg)+\alpha_{13}\mathbb{E} \left( \int_{\mathbb{R}} f^{\prime}(G+u) u\nu_{bg}(du) \right)\\
	 \nonumber&+\bigg(\frac{p_1+p_2}{\alpha_1 \alpha_2}\bigg)\mathbb{E}\left(f^{\prime}(G) \right)+	\frac{1}{\alpha_1 \alpha_2}\mathbb{E}\bigg(p_1 \int_{0}^{\infty}f^{\prime\prime}(G+u)e^{-\alpha_1u}du\\
	 \nonumber&-p_2\int_{0}^{\infty}f^{\prime\prime}(G-u)e^{-\alpha_2u}du\bigg)\\
	 =&\mathbb{E} \bigg(\left(  \mathbb{E}X -G \right)f(G)\bigg)+\alpha_{13}\mathbb{E} \left( \int_{\mathbb{R}} f^{\prime}(G+u) u\nu_{bg}(du) \right)\\
	 \label{ibpf4}&+\bigg(\frac{p_1+p_2}{\alpha_1 \alpha_2}\bigg)\mathbb{E}\left(f^{\prime}(G) \right)+\frac{1}{\alpha_1\alpha_2}\mathbb{E} \left( \int_{\mathbb{R}} f^{\prime\prime}(G+u) u\nu_{bg}(du) \right),
	 		\end{align}
\noindent
where the last integral follows by using \eqref{ibpf2} (replacing $f^\prime$ by $f^{\prime\prime}$). Also, \eqref{ibpf4} can be written as

\begin{align}
		\nonumber\mathbb{E}h(G)-\mathbb{E}h(X) =&\mathbb{E}\bigg(\frac{1}{\alpha_1 \alpha_2}Gf^{\prime\prime}(G)+ \left(\frac{p_1+p_2}{\alpha_1\alpha_2}+ \alpha_{13}G   \right)f^{\prime}(G)\\
		\nonumber&\quad +\left(  \mathbb{E}X -G \right)f(G)  \bigg)\\
		\nonumber&\quad\quad+\alpha_{13}\mathbb{E} \left(-Gf^{\prime}(G)+ \int_{\mathbb{R}} f^{\prime}(G+u) u\nu_{bg}(du) \right)\\
		&\quad\quad\quad+ \frac{1}{\alpha_1\alpha_2} \mathbb{E} \left(-Gf^{\prime\prime}(G)+ \int_{\mathbb{R}} f^{\prime\prime}(G+u) u\nu_{bg}(du) \right).\label{tt0}
	\end{align}
	\noindent
	Let $h_1,h_2 \in \mathcal{W}_3$ be two associated test functions with $f^\prime$ and $f^{\prime\prime}$ satisfy the Stein equation \eqref{PP2:e15}. Then from \eqref{tt0}, we get
	\begin{align}
	\nonumber\mathbb{E}h(G)-\mathbb{E}h(X) =&\mathbb{E}\bigg(\frac{1}{\alpha_1 \alpha_2}Gf^{\prime\prime}(G)+ \left(\frac{p_1+p_2}{\alpha_1\alpha_2}+ \alpha_{13}G   \right)f^{\prime}(G)\\
	\nonumber&\quad +\left(  \mathbb{E}X -G \right)f(G)  \bigg)+\alpha_{13}\bigg(\mathbb{E}h_1(G)-\mathbb{E}h_1(X)\bigg)\\
	&\quad\quad\quad+ \frac{1}{\alpha_1\alpha_2} \bigg(\mathbb{E}h_2(G)-\mathbb{E}h_2(X)\bigg).\label{ibpf5}
	\end{align}
	\noindent
	For any $h \in \mathcal{W}_3$,
	\begin{align}
	\nonumber\bigg|\mathbb{E}h(G)-\mathbb{E}h(X)\bigg| \leq&\bigg|\mathbb{E}\bigg(\frac{1}{\alpha_1 \alpha_2}Gf^{\prime\prime}(G)+ \left(\frac{p_1+p_2}{\alpha_1\alpha_2}+ \alpha_{13}G   \right)f^{\prime}(G)\\
	\nonumber&\quad +\left(  \mathbb{E}X -G \right)f(G)  \bigg)\bigg|+\bigg|\alpha_{13}\bigg(\mathbb{E}h_1(G)-\mathbb{E}h_1(X)\bigg)\bigg|\\
	&\quad\quad\quad+ \frac{1}{\alpha_1\alpha_2} \bigg|\bigg(\mathbb{E}h_2(G)-\mathbb{E}h_2(X)\bigg)\bigg|.\label{ibpf6}
	\end{align}
	\noindent
	Taking supremum over the functions $h \in \mathcal{W}_3,$ we set
	\begin{align}
	\nonumber d_3(G,X) \leq&\bigg|\mathbb{E}\bigg(\frac{1}{\alpha_1 \alpha_2}Gf^{\prime\prime}(G)+ \left(\frac{p_1+p_2}{\alpha_1\alpha_2}+ \alpha_{13}G   \right)f^{\prime}(G)\\
	&\quad +\left(  \mathbb{E}X -G \right)f(G)  \bigg)\bigg|+|\alpha_{13}|d_3(G,X)+ \frac{1}{\alpha_1\alpha_2}d_3(G,X).\label{ibpf7}
	\end{align}
	\noindent
	That is,
	\begin{align}
	\nonumber&\left(1 - |\alpha_{13}|-\frac{1}{\alpha_1 \alpha_2}\right)d_{3}(G,X)\\	
	 &\quad\quad \leq \bigg|  \mathbb{E}\bigg(\frac{1}{\alpha_1 \alpha_2}Gf^{\prime\prime}(G)+ \left(\frac{p_1+p_2}{\alpha_1\alpha_2}+ \alpha_{13}G   \right)f^{\prime}(G)+\left(  \mathbb{E}X -G \right)f(G)  \bigg)\bigg|.\label{tt1}
	\end{align}
	\noindent
	 Using Lemma \ref{ibp} for $\mathbb{E}(Gf(G))$ and $\mathbb{E}(Gf^{\prime}(G))$, we get after rearranging the terms	
	\begin{align}
		\nonumber&\left(1 - |\alpha_{13}|-\frac{1}{\alpha_1 \alpha_2}\right)d_{3}(G,X)\\	
	\nonumber&\quad\quad \leq \bigg|  \mathbb{E}\bigg(\frac{1}{\alpha_1 \alpha_2}Gf^{\prime\prime}(G)+ \left(\frac{p_1+p_2}{\alpha_1\alpha_2}+ \alpha_{13}G   \right)f^{\prime}(G)+\left(  \mathbb{E}X -G \right)f(G)  \bigg)\bigg|\\
		\nonumber&\quad\quad=\bigg|\mathbb{E}\bigg(f^{\prime\prime}(G) \left(\frac{1}{\alpha_1\alpha_2}G +\alpha_{13}\Gamma_{2}(G)-\Gamma_{3}(G)  \right)\\
		\nonumber&\quad\quad\quad\quad+f^{\prime}(G) \left( \frac{p_1+p_2}{\alpha_1 \alpha_2}+\alpha_{13}\mathbb{E}(G)-  \mathbb{E}(\Gamma_{2}(G)) \right) +f(G)\bigg(\mathbb{E}(X)-\mathbb{E}(G)\bigg)    \bigg)     \bigg| \\
		\nonumber&\quad\quad\leq \|f^{\prime\prime}\|\mathbb{E}\bigg| \frac{1}{\alpha_1\alpha_2}G +\alpha_{13}\Gamma_{2}(G)-\Gamma_{3}(G)  \bigg|\\
		&\quad\quad\quad\quad+\|f^{\prime}\| \bigg|\frac{p_1+p_2}{\alpha_1\alpha_2}+\alpha_{13}\mathbb{E}(G) - \mathbb{E}(\Gamma_{2}(G))   \bigg|+\|f\| \bigg| \mathbb{E} (X)-\mathbb{E}(G)\bigg|.\label{tt4}
	\end{align}
	\noindent
	Using Lemma \ref{th3} in \eqref{tt4}, we get	
	\begin{align}
	\nonumber 	d_{3}(G,X) &\leq  \frac{1}{3}\alpha_{12}\mathbb{E}\bigg| \frac{1}{\alpha_1\alpha_2}G +\alpha_{13}\Gamma_{2}(G)-\Gamma_{3}(G)  \bigg|\\
\nonumber	&\quad\quad+\frac{1}{2}\alpha_{12} \bigg|\frac{p_1+p_2}{\alpha_1\alpha_2}+\alpha_{13}\mathbb{E}(G) - \mathbb{E}(\Gamma_{2}(G))   \bigg|+ \alpha_{12}\bigg| \mathbb{E} (X)-\mathbb{E}(G)\bigg|, 
	\end{align}
	\noindent
	  where $\alpha_{12}$ is given in \eqref{alpi}. This proves the result.
\end{proof}
\begin{rem}
	(i) Observe that when $\alpha_1,\alpha_2>1$, the condition $\alpha_1 \alpha_2 > 1+ |\alpha_1- \alpha_2|$ is satisfied, and in that case $0 \leq |\alpha_{13}| \leq 1.$ It will be interesting to obtain bounds for the case $\alpha_1<1$ or $\alpha_2<1$ or both are less than unity.
	
	\vskip 1ex
	\item [(ii)] The first term on the right hand side of \eqref{sm6} provides a constant times of the $L^1$-distance between $G^{*}:=\frac{1}{\alpha_1 \alpha_2}G+\alpha_{13}\Gamma_{2}(G)$ and $\Gamma_{3}(G)$, which plays a crucial role to present an upper bound of $d_{3}(G,X)$ in terms of cumulants (see Theorem \ref{cumulcor0}).
	
	\vskip 1ex
	\item[(iii)] From \eqref{sm6}, if $\mathbb{E}(G) = \mathbb{E}(X)$, then
	\begin{align}
			 \nonumber d_{{3}}(G,X) \leq&~ \frac{1}{3} \alpha_{12} \mathbb{E} \left|G^*-\Gamma_{3}(G)   \right|+\frac{1}{2} \alpha_{12} \mathbb{E} \left|\left(\frac{p_1}{\alpha_1^{2} } +\frac{p_2}{\alpha_2^{2} }\right)- \mathbb{E}\left(\Gamma_{2}(G) \right)  \right|\\ =&~\frac{1}{3} \alpha_{12} \mathbb{E} \left|G^*-\Gamma_{3}(G)   \right|+\frac{1}{2} \alpha_{12} \mathbb{E} \left|Var(X)- Var(G) \right|.\label{re0}
	\end{align}
	\noindent
	 So, when $\mathbb{E}(G)=\mathbb{E}(X)$ and $Var(G)=Var(X),$ then from \eqref{re0},
	$$d_{{3}}(G,X) \leq~ \frac{1}{3} \alpha_{12} \mathbb{E} \left|G^*-\Gamma_{3}(G)   \right|.$$
	
\end{rem}
\noindent
The following corollary immediately follows for $VG(\alpha_1,\alpha_2,p)$, $SVG(\alpha,p)$ and $La(\alpha)$ distributions.
\begin{cor}\label{vgapproxcor}
	Let $G\in \mathbb{D}^{2,4}$.
	\noindent
		\item [(i)] Let $X_{v}\sim VG(\alpha_1,\alpha_2,p)$, where $\alpha_1\alpha_2>(1+|\alpha_1-\alpha_2|)$, and $\alpha_{12}$ and $\alpha_{13}$ be defined as in \eqref{alpi}. Then,
		\begin{align}
		\nonumber d_{{3}}(G,X_v) \leq&~ \frac{1}{3} \alpha_{12} \mathbb{E} \left|\frac{1}{\alpha_1 \alpha_2}G+ \alpha_{13}\Gamma_{2}(G)-\Gamma_{3}(G)   \right|\\
		&+\frac{1}{2} \alpha_{12}  \left|\frac{2p}{\alpha_1 \alpha_2}+\alpha_{13}\mathbb{E}(G)- \mathbb{E}\left(\Gamma_{2}(G) \right)  \right|+\alpha_{12} | \mathbb{E} (X)-\mathbb{E}(G)|.\label{msb2}
		\end{align}
		\item [(ii)] Let $X_s\sim SVG(\alpha,p)$, where $\alpha>1$. Then,
		\begin{align}
		\nonumber d_{{3}}(G,X_s)& \leq \frac{\alpha^{2}}{3(\alpha^{2}-1)}\mathbb{E}\left| \frac{1}{\alpha^{2}}G-\Gamma_{3}(G)  \right|\\
		 &\quad\quad+\frac{\alpha^{2}}{2(\alpha^{2}-1)}\left|\frac{2p}{\alpha^{2}}-\mathbb{E}\left(\Gamma_{2}(G) \right)  \right|+\frac{\alpha^{2}}{(\alpha^{2}-1)}| \mathbb{E} (X_s)-\mathbb{E}(G)|. \label{PP2:smt1}
		\end{align}
		
		\item [(iii)] Let $X_l\sim La(\alpha)$. Then, for $\alpha>1$,
		\begin{align}
			\nonumber d_{{3}}(G,X_l) & \leq \frac{\alpha^{2}}{3(\alpha^{2}-1)}\mathbb{E}\left| \frac{1}{\alpha^{2}}G-\Gamma_{3}(G)  \right|\\
			 &\quad\quad+\frac{\alpha^{2}}{2(\alpha^{2}-1)}\left|\frac{2}{\alpha^{2}}-\mathbb{E}\left(\Gamma_{2}(G) \right)  \right|+\frac{\alpha^{2}}{(\alpha^{2}-1)}| \mathbb{E} (X_l)-\mathbb{E}(G)|.\label{msb3}
		\end{align}
\end{cor}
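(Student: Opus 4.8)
The plan is to recognize that each of the three target laws is a special or degenerate member of the bilateral gamma family, so that the corollary follows by substituting the appropriate parameter values into the general bound \eqref{sm6} of Theorem \ref{bgapp:th1}. Concretely, from the list of special cases in Section \ref{prebg} one has $VG(\alpha_1,\alpha_2,p) = BG(\alpha_1,p,\alpha_2,p)$, $SVG(\alpha,p) = BG(\alpha,p,\alpha,p)$, and $La(\alpha) = SVG(\alpha,1) = BG(\alpha,1,\alpha,1)$. Since $G \in \mathbb{D}^{2,4}$ by hypothesis, Theorem \ref{bgapp:th1} applies once the admissibility condition $\alpha_1\alpha_2 > 1 + |\alpha_1-\alpha_2|$ is checked in each case, and nothing beyond this specialization is needed.

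For part (i) I would set $p_1 = p_2 = p$ in \eqref{sm6}. The quantities $\alpha_{12}$ and $\alpha_{13}$ from \eqref{alpi} are unchanged, the condition on the scale parameters is identical, and only the correction factor $p_1 + p_2$ collapses to $2p$; this produces \eqref{msb2} directly.

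For part (ii) I would further impose $\alpha_1 = \alpha_2 = \alpha$. Then $\alpha_{13} = \tfrac{1}{\alpha} - \tfrac{1}{\alpha} = 0$, so the $\Gamma_2$ term inside the first absolute value drops out and the middle term loses its $\alpha_{13}\mathbb{E}(G)$ contribution; simultaneously $|\alpha_1 - \alpha_2| = 0$ reduces the admissibility condition to $\alpha^2 > 1$, i.e. $\alpha > 1$, and gives $\alpha_{12} = \tfrac{\alpha^2}{\alpha^2 - 1}$. Substituting these together with $\tfrac{1}{\alpha_1\alpha_2} = \tfrac{1}{\alpha^2}$ and $\tfrac{p_1+p_2}{\alpha_1\alpha_2} = \tfrac{2p}{\alpha^2}$ yields \eqref{PP2:smt1}. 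Part (iii) is then the single substitution $p = 1$ into \eqref{PP2:smt1}, using $La(\alpha) = SVG(\alpha,1)$, which replaces $\tfrac{2p}{\alpha^2}$ by $\tfrac{2}{\alpha^2}$ and gives \eqref{msb3}.

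There is no genuine analytic obstacle here: the entire content is the reduction already carried out in Theorem \ref{bgapp:th1}, combined with the identification of each law as a $BG$ law. The only point that requires a moment's care is the bookkeeping of how the condition $\alpha_1\alpha_2 > 1 + |\alpha_1-\alpha_2|$ specializes — in particular that the symmetric case forces exactly $\alpha > 1$, which is why the hypotheses in (ii) and (iii) are stated in that form.
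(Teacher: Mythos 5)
Your proposal is correct and coincides with the paper's own proof: the authors likewise obtain (i) by setting $p_1=p_2=p$ in \eqref{sm6}, (ii) by further setting $\alpha_1=\alpha_2=\alpha$ (so that $\alpha_{13}=0$, $\alpha_{12}=\tfrac{\alpha^2}{\alpha^2-1}$, and the admissibility condition reduces to $\alpha>1$), and (iii) by putting $p=1$ in \eqref{PP2:smt1}. Nothing further is needed.
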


\begin{rem}
	(i) Eichelsbacher and Th$\ddot{\text{a}}$le \cite[Theorem 4.1]{key2} derived an upper bound in terms of $\Gamma$-operators, under the Wasserstein distance $d_W$, for centered $VG$ approximation of functionals of an isonormal Gaussian process. However, their bound contains two indeterminate positive constants. 
	
	\vskip 1ex
	\item [(ii)] Recently, Gaunt \cite[Theorem 4.1]{gauntnew} obtained two explicit constants for the $VG$ approximation, but they are too complicated to compute. For the $SVG$ case with $\mathbb{E}(G)=\mathbb{E}(X_s)=0$, Gaunt \cite[Theorem 5.3]{kk2} obtained the following bound:
	\begin{align}
	 d_{W}(G, X_s)& \leq b_1\mathbb{E}\left| \frac{1}{\alpha^{2}}G-\Gamma_{3}(G)  \right|+b_2\left|\frac{2p}{\alpha^{2}}-\mathbb{E}\left(\Gamma_{2}(G) \right)  \right| ,\label{svgsm2}
   \end{align}     
\noindent
where $b_1=\frac{9\alpha^{2}}{(2p+1)}$ and $b_2=\frac{9 \alpha}{2}\bigg(\frac{1}{2p+1}+\frac{\pi}{2} \frac{\Gamma (p+1/2)}{\Gamma(p+1)}\bigg).$ For the case $\mathbb{E}(G)=\mathbb{E}(X_s)=0$, our bound for the $d_3$ metric is
\begin{align}
d_{3}(G, X_s)& \leq c_1\mathbb{E}\left| \frac{1}{\alpha^{2}}G-\Gamma_{3}(G)  \right|+c_2\left|\frac{2p}{\alpha^{2}}-\mathbb{E}\left(\Gamma_{2}(G) \right)  \right| .\label{svgmsb1}
\end{align}
\noindent
where $c_1=\frac{\alpha^{2}}{3(\alpha^{2}-1)}$ and $c_2=\frac{\alpha^{2}}{2(\alpha^{2}-1)}.$ Note that only constants $b_i$ and $c_i$, $i=1,2$, are different. Also, $c_1<b_1$ if

\begin{align}\label{svgmsb2}
	\alpha > \bigg(1+ \frac{2p+1}{27}\bigg)^{\frac{1}{2}}.
\end{align}
\noindent
Also, when, \eqref{svgmsb2} holds then $c_2 <b_2$ if
$$\frac{1}{2p+1}+\frac{\pi}{2} \frac{\Gamma (p+1/2)}{\Gamma(p+1)}> \frac{\alpha}{9(\alpha^2 -1)},$$
\noindent
which will be true if
\begin{align}\label{svgmsb3}
\frac{1}{2p+1}+\frac{\pi}{2} \frac{\Gamma (p+1/2)}{\Gamma(p+1)}>\frac{\alpha}{9} \frac{27}{(2p+1)},
\end{align}
\noindent
since, from \eqref{svgmsb2}, $$\frac{1}{\alpha^2 -1} < \frac{27}{2p+1}.$$
\noindent
Note that \eqref{svgmsb3} holds when
\begin{align}\label{svgmsb4}
\alpha< \frac{1}{3}+\frac{\pi}{6}\frac{(2p+1) \Gamma (p+1/2)}{\Gamma(p+1)}.
\end{align}
\noindent
Hence, when $c_1<b_1$ and $c_2<b_2$ and
$$\bigg(1+ \frac{2p+1}{27}\bigg)^{\frac{1}{2}}<\alpha<\frac{1}{3}+\frac{\pi}{6}\frac{(2p+1) \Gamma (p+1/2)}{\Gamma(p+1)},$$
\noindent
 the bound for $d_3$ given in \eqref{svgmsb1} is sharper than the one given in \eqref{svgsm2}.
\end{rem}

\noindent
Our next result deals with the normal and gamma distributions.
\begin{cor}\label{norgam0}
	Let $G\sim \mathbb{D}^{2,4},$ and $\alpha>1$.
	\begin{enumerate}
		\item [(i)] Let $Z_\alpha \sim \mathcal{N}(0,\alpha^2)$. Then
		\begin{align}\label{norgam1}
			d_3(G,Z_\alpha) \leq \frac{1}{3}\mathbb{E}(|\Gamma_{3}(G)|)+\frac{1}{2}\bigg|\alpha^2- \mathbb{E}(\Gamma_{2}(G))\bigg|+|\mathbb{E}(Z_\alpha)-\mathbb{E}(G)|.
		\end{align}
		\item [(ii)] Let $X_g \sim Ga(\alpha,p)$. Then
		\begin{align}
		\nonumber d_3(G,X_g) &\leq	\frac{\alpha}{3(\alpha-1)}\mathbb{E}\bigg|\frac{1}{\alpha}\Gamma_{2}(G)-\Gamma_{3}(G)  \bigg|\\
		\nonumber&\quad\quad+\frac{\alpha}{2(\alpha-1)}\bigg|\frac{1}{\alpha}\mathbb{E}(G)-\mathbb{E}(\Gamma_{2}(G)) \bigg|\\
		&\quad\quad\quad\quad+\frac{\alpha}{(\alpha-1)}\bigg|\mathbb{E}(X_g)-\mathbb{E}(G) \bigg|\label{norgam2}
		\end{align}
	\end{enumerate}
\end{cor}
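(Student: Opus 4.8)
The plan is to derive both bounds as limiting cases of the bilateral gamma bound \eqref{sm6}, using the two limit relations recorded in Section \ref{prebg}: the gamma law $Ga(\alpha,p)$ is the $\alpha_2\to\infty$ limit of $BG(\alpha,p,\alpha_2,p_2)$ (item (v)), while $\mathcal{N}(0,\alpha^2)$ is the $p\to\infty$ limit of $SVG(\sqrt{2p}/\alpha,p)$ (item (iv)). The mechanism for transferring a bound along such a limit is the triangle inequality
\begin{align*}
d_3(G,X_\infty)\le d_3(G,X_\theta)+d_3(X_\theta,X_\infty),
\end{align*}
together with the observation that $d_3(X_\theta,X_\infty)\to0$ whenever $X_\theta\overset{\mathcal{L}}{\to}X_\infty$. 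Indeed, every $h\in\mathcal{W}_1$ is bounded by $1$ and $1$-Lipschitz, so $\mathcal{W}_1$ is a uniformly bounded, equi-Lipschitz (hence convergence-determining) class and weak convergence forces $d_1(X_\theta,X_\infty)\to0$; since $d_3\le d_1$ by \eqref{metrico1}, the same holds for $d_3$. Letting the explicit coefficients of \eqref{sm6} converge while this error term vanishes then yields the claimed bounds.

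For part (ii), I would apply Theorem \ref{bgapp:th1} with $\alpha_1=\alpha$, $p_1=p$ and $p_2$ held fixed, and let $\alpha_2\to\infty$. The admissibility hypothesis $\alpha_1\alpha_2>1+|\alpha_1-\alpha_2|$ reduces, for $\alpha_2>\alpha$, to $(\alpha-1)(\alpha_2+1)>0$, which holds for all large $\alpha_2$ precisely because $\alpha>1$. The coefficients then converge explicitly,
\begin{align*}
\alpha_{12}=\frac{\alpha\alpha_2}{(\alpha-1)(\alpha_2+1)}\to\frac{\alpha}{\alpha-1},\qquad \alpha_{13}=\frac1\alpha-\frac1{\alpha_2}\to\frac1\alpha,\qquad \frac{1}{\alpha_1\alpha_2}\to0,\qquad \frac{p_1+p_2}{\alpha_1\alpha_2}\to0,
\end{align*}
while $\mathbb{E}(X)=\tfrac{p}{\alpha}-\tfrac{p_2}{\alpha_2}\to\tfrac{p}{\alpha}=\mathbb{E}(X_g)$. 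Substituting these limits into the three terms of \eqref{sm6}---noting that $G$, $\Gamma_2(G)$, $\Gamma_3(G)$ and $\mathbb{E}(G)$ do not depend on $\alpha_2$---collapses the right-hand side to exactly \eqref{norgam2}, and the triangle-inequality argument completes the proof.

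For part (i), it is cleanest to start from the symmetric variance-gamma bound \eqref{PP2:smt1} of Corollary \ref{vgapproxcor}(ii) with scale parameter $\widetilde\alpha=\sqrt{2p}/\alpha$ (which exceeds $1$ once $p$ is large, so the corollary applies), and let $p\to\infty$. Then $\widetilde\alpha^2=2p/\alpha^2\to\infty$ gives $\tfrac{\widetilde\alpha^2}{\widetilde\alpha^2-1}\to1$ and $\tfrac{1}{\widetilde\alpha^2}\to0$, whereas $\tfrac{2p}{\widetilde\alpha^2}=\alpha^2$ is constant and $\mathbb{E}(X_s)=0=\mathbb{E}(Z_\alpha)$ by symmetry. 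Consequently the three terms of \eqref{PP2:smt1} tend to $\tfrac13\mathbb{E}|\Gamma_3(G)|$, $\tfrac12|\alpha^2-\mathbb{E}(\Gamma_2(G))|$ and $|\mathbb{E}(Z_\alpha)-\mathbb{E}(G)|$, which is precisely \eqref{norgam1}; item (iv) supplies $X_s\overset{\mathcal{L}}{\to}Z_\alpha$, so the transfer mechanism of the first paragraph applies verbatim.

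The only point demanding genuine care---the main, if modest, obstacle---is the interchange of limit and supremum implicit in ``passing to the limit in \eqref{sm6}''. The coefficient limits above are elementary; what must be justified is that the distance $d_3(X_\theta,X_\infty)$ between the approximating $BG$ (resp. $SVG$) law and its gamma (resp. Gaussian) limit really tends to zero, rather than merely that each individual test function converges. This is exactly where the uniform boundedness and equi-Lipschitz property of $\mathcal{W}_3\subseteq\mathcal{W}_1$ is needed, since it upgrades the weak limits (iv)--(v) to convergence in $d_3$ via \eqref{metrico1} and so legitimizes the passage to the limit.
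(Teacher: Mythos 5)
Your proposal is correct and follows essentially the same route as the paper: both parts are obtained by passing to the limit in the bilateral-gamma bound \eqref{sm6} (via the $SVG(\sqrt{2p}/\alpha,p)$ bound \eqref{PP2:smt1} with $p\to\infty$ for the normal case, and $\alpha_2\to\infty$ with $\alpha_1=\alpha$, $p_1=p$ for the gamma case), and your coefficient computations agree with the stated limits. The only difference lies in how the convergence of the distance is justified: the paper invokes Theorem 7.12 of \cite{Villani} to conclude $\lim d_3(G,X_\theta)=d_3(G,X_\infty)$ directly, whereas you use the triangle inequality together with the observation that $\mathcal{W}_3\subseteq\mathcal{W}_1$ lies in the unit bounded-Lipschitz ball, so weak convergence gives $d_3(X_\theta,X_\infty)\leq d_1(X_\theta,X_\infty)\to0$; both justifications are valid.
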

\begin{proof}
	(i) Let $\alpha_p= \frac{\sqrt{2p}}{\alpha} ~(\alpha>1)$. Also, let $X_p \sim SVG(\alpha_p,p),$ and $Z_\alpha \sim \mathcal{N}(0,\alpha^2)$. Then it can be checked using the cf that $X_p \overset{L}{\to} Z_\alpha$, as $p\to \infty$. Also, it follows (see Theorem 7.12 of \cite{Villani}) that

		$$\lim_{p\to \infty}d_3(G,X_p)=d_3(G,Z_\alpha).$$
	\noindent
	Now replacing $X_s$ by $X_p$ and $\alpha$ by $\alpha_p$  in \eqref{PP2:smt1}, and taking $p\to \infty$, \eqref{norgam1} follows.
	
\vskip 1ex
	\noindent
	(ii) Since the $BG(\alpha_1,p_1,\alpha_2,p_2)$ tends to $Ga(\alpha_1,p_1)$ distribution, as $p_2 \to \infty,$ using the arguments as in (i) above, writing  $\alpha_1=\alpha$ and $p_1=p$, the result follows.
\end{proof}

\begin{rem}
(i) Eichelsbacher and Th$\ddot{\text{a}}$le \cite[Corollary 4.4]{key2} derived the following bounds for $G\in \mathbb{D}^{2,4}$ with $\mathbb{E}(G)=0$
	\begin{align}\label{Ei0}
		d_W(G,Z_\alpha) \leq c_3 \mathbb{E}\left(|\Gamma_3(G)|\right)+c_4\bigg|\alpha^2 - \mathbb{E}(\Gamma_2(G))\bigg|,
	\end{align}	
\noindent
and
\begin{align}\label{Ei1}
	d_W(G,X_g) \leq c_5\mathbb{E} \left|\frac{1}{\alpha}\Gamma_2(G)-\Gamma_{3}(G) \right|+c_6\left|\frac{p}{\alpha^2}-\mathbb{E}(\Gamma_2(G)) \right|,
\end{align}

\noindent
where the constants $c_3$ - $c_6$ are mentioned to be positive, but explicit forms are not given.

\vskip 1ex	
\item[(ii)] From \eqref{norgam1} with $\mathbb{E}(G)=\mathbb{E}(Z_\alpha)=0$, we get the following bound for the metric $d_3$ as
\begin{align}\label{norgam3}
	d_3(G,Z_\alpha) \leq \frac{1}{3}\mathbb{E}|\Gamma_{3}(G)|+\frac{1}{2}\left|\alpha^2- \mathbb{E}(\Gamma_{2}(G))\right|.
\end{align}
\noindent
From \eqref{norgam2}, if $\mathbb{E}(G)= \mathbb{E}(X_g)=\frac{p}{\alpha}$, then

\begin{align}\label{norgam4}
 d_3(G,X_g) &\leq	\frac{\alpha}{3(\alpha-1)}\mathbb{E}\bigg|\frac{1}{\alpha}\Gamma_{2}(G)-\Gamma_{3}(G)  \bigg|+\frac{\alpha}{2(\alpha-1)}\bigg|\frac{p}{\alpha^2}-\mathbb{E}(\Gamma_{2}(G)) \bigg|.
\end{align}
\noindent
Observe that our bound given in \eqref{norgam3} and \eqref{norgam4} contains the explicit constants, and this is one of the advantages of our approach.
\end{rem}

\subsection{Bounds in terms of cumulants}
The next result presents upper bound for $BG$ approximation, for $d_3$ in terms of the cumulants, if a sequence of rvs is of the form $G_n=I_2(f_n),~f_n\in \mathcal{H}^{\odot 2},~n\geq 1.$ First we prove a lemma which is helpful for Theorem \ref{cumulcor0}.
\begin{lem}\label{gammopformula}
	Let $G=I_2(f)$ where $f\in \mathcal{H}^{\odot 2}$. Then for $m,p\geq 1,$
	\begin{align}
	(i)~ \mathbb{E} \bigg(G \Gamma_{m} (G)  \bigg)&=\frac{1}{m!}\kappa_{m+1}(G),~\text{and}\\
	(ii)~\mathbb{E} \bigg(\Gamma_{m}(G) \Gamma_{p}(G)   \bigg)&=\frac{\kappa_{m+p} (G)}{(m+p-1)!}+ \frac{\kappa_{m} (G) \kappa_{p} (G)}{ (m-1)! (p-1)!   }.\label{gammopformula2}
	\end{align}
\end{lem}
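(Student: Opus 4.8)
The plan is to reduce both identities to a single explicit chaos representation of the $\Gamma$-operators of a second-chaos variable, after which everything follows from the isometry \eqref{prductfor2}, the contraction identity \eqref{contract1}, and the cumulant formula \eqref{sechaoscum}. The crucial first step is to prove, by induction on $m$, the representation
\begin{align*}
\Gamma_m(G) = \frac{\kappa_m(G)}{(m-1)!} + 2^{m-1}I_2\big(f\otimes_1^{(m)}f\big), \qquad m\geq 1,
\end{align*}
equivalently $\Gamma_m(G)-\mathbb{E}(\Gamma_m(G)) = 2^{m-1}\sum_{j\geq 1}\lambda_{f,j}^{m}(Z_j^2-1)$ in the spectral representation $G=\sum_{j\geq 1}\lambda_{f,j}(Z_j^2-1)$. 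The base case $m=1$ is $\Gamma_1(G)=G=I_2(f)$ together with $\mathbb{E}(G)=\kappa_1(G)=0$. For the inductive step I would use the recursion \eqref{gamop}: since $L^{-1}$ annihilates constants and acts as multiplication by $-\tfrac12$ on the second Wiener chaos, and since $DI_2(g)=2I_1(g)$, applying $-DL^{-1}$ to $\Gamma_{m-1}(G)$ and pairing with $DG=2I_1(f)$ in $\mathcal{H}$ produces exactly $2^{m-1}\sum_{j}\lambda_{f,j}^{m}Z_j^2$; separating off the mean $2^{m-1}\sum_j\lambda_{f,j}^m=\mathbb{E}(\Gamma_m(G))$ gives the claim. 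This step simultaneously shows that every $\Gamma_m(G)$ lives in the sum of the zeroth and second chaoses, which is precisely what makes the recursion close.

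Granting this representation, part (i) is immediate. Since $\mathbb{E}(G)=0$, the constant part of $\Gamma_m(G)$ drops out of $\mathbb{E}(G\,\Gamma_m(G))$, and the isometry relation \eqref{prductfor2} gives
\begin{align*}
\mathbb{E}\big(G\,\Gamma_m(G)\big) = 2^{m-1}\,\mathbb{E}\big(I_2(f)\,I_2(f\otimes_1^{(m)}f)\big) = 2^{m-1}\cdot 2!\,\big\langle f,\, f\otimes_1^{(m)}f\big\rangle_{\mathcal{H}^{\otimes 2}}.
\end{align*}
Identifying $\langle f\otimes_1^{(m)}f, f\rangle_{\mathcal{H}^{\odot 2}}$ with $\kappa_{m+1}(G)/(2^{m} m!)$ through \eqref{sechaoscum} (with index $m+1$), the powers of $2$ cancel and I obtain $\mathbb{E}(G\,\Gamma_m(G))=\kappa_{m+1}(G)/m!$.

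For part (ii), write $\Gamma_m(G)=a_m+\widetilde\Gamma_m$ with $a_m=\mathbb{E}(\Gamma_m(G))=\kappa_m(G)/(m-1)!$ and $\widetilde\Gamma_m=2^{m-1}I_2(f\otimes_1^{(m)}f)$ centered. The two cross terms vanish because $\widetilde\Gamma_m$ and $\widetilde\Gamma_p$ are centered, so that
\begin{align*}
\mathbb{E}\big(\Gamma_m(G)\,\Gamma_p(G)\big) = a_m a_p + 2^{m+p-2}\cdot 2!\,\big\langle f\otimes_1^{(m)}f,\, f\otimes_1^{(p)}f\big\rangle_{\mathcal{H}^{\odot 2}},
\end{align*}
again by \eqref{prductfor2}. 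The first term equals $\kappa_m(G)\kappa_p(G)/((m-1)!(p-1)!)$, which is the second summand in \eqref{gammopformula2}. For the remaining term I would apply the contraction identity \eqref{contract1} to collapse $\langle f\otimes_1^{(m)}f, f\otimes_1^{(p)}f\rangle_{\mathcal{H}^{\odot 2}}$ to $\langle f\otimes_1^{(m+p-1)}f, f\rangle_{\mathcal{H}^{\odot 2}}$, and then \eqref{sechaoscum} (with index $m+p$) identifies this with $\kappa_{m+p}(G)/(2^{m+p-1}(m+p-1)!)$; the powers of $2$ cancel and the term becomes $\kappa_{m+p}(G)/(m+p-1)!$, completing \eqref{gammopformula2}.

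The only genuinely delicate point is the inductive representation of $\Gamma_m(G)$ in the first step. One must track the constant (mean) terms carefully, noting that the clean formula $2^{m-1}\sum_j\lambda_{f,j}^{m}Z_j^2$ is valid only after restoring the mean, and that for $m=1$ the mean is $0$ rather than $\sum_j\lambda_{f,j}$. One must also justify the Malliavin identities $DI_2(g)=2I_1(g)$ and $L^{-1}I_2(g)=-\tfrac12 I_2(g)$ and the $L^2$-convergence of the series; these are all licit because $G$ and every $\Gamma_m(G)$ belong to $\mathbb{D}^{\infty}$ and sit inside the finite sum of the zeroth and second chaoses. Once the representation is secured, both identities are routine consequences of the isometry, the contraction relation, and the cumulant formula.
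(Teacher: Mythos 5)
Your proof is correct. For part (ii) it is essentially the paper's argument: the identity $\Gamma_m(G)-\mathbb{E}(\Gamma_m(G))=2^{m-1}I_2\big(f\otimes_1^{(m)}f\big)$ is exactly the relation the paper invokes as equation \eqref{ibpformula2} (citing \cite{MSLC0}), and from there both you and the paper split off the means, apply the isometry \eqref{prductfor2}, collapse the inner product with \eqref{contract1}, and finish with \eqref{sechaoscum}. The two genuine differences are these. First, you rederive the chaos representation by induction on the recursion \eqref{gamop} (using $DI_2(g)=2I_1(g)$ and $L^{-1}=-\tfrac12$ on the second chaos) rather than citing it; your computation of the inductive step and your bookkeeping of the constant term, including the $m=1$ anomaly, are accurate, so this makes the lemma self-contained at the cost of some length. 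Second, for part (i) the paper does not use the chaos representation at all: it applies the integration-by-parts formula \eqref{intbypfo} with $H=\Gamma_m(G)$ to get $\mathbb{E}(G\Gamma_m(G))=\mathbb{E}\big(\langle DG,-DL^{-1}\Gamma_m(G)\rangle_{\mathcal{H}}\big)=\mathbb{E}(\Gamma_{m+1}(G))=\kappa_{m+1}(G)/m!$ via \eqref{CuGamma}, a one-line argument that is valid for any centered $G\in\mathbb{D}^{\infty}$, not only for the second chaos. Your route through the isometry gives the same answer but uses the second-chaos structure where it is not needed.
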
 
\vfill

\begin{proof}
	(i) For $G,H \in \mathbb{D}^{1,2}$, we have from \eqref{intbypfo}
	
	\begin{align}\label{ibpformula0}
	\mathbb{E}(GH)=\mathbb{E}(G)\mathbb{E}(H)+ \mathbb{E}\bigg(\langle DG, -DL^{-1}H \rangle_{\mathcal{H}} \bigg).
	\end{align}	
	\noindent
	It is also known that (see Proposition 2.7.13 of \cite{nourdin}), for a rv $G=I_2(f)$ with $f\in \mathcal{H}^{\odot 2}$, $\mathbb{E}(G)=0$. Let $H=\Gamma_m(G) \in \mathbb{D}^{\infty}, ~m\geq 1$. Then from \eqref{ibpformula0} and \eqref{gamop}, we get
	\begin{align*}
	\mathbb{E}(G \Gamma_m(G))&=\mathbb{E} \bigg(\langle DG,-DL^{-1}\Gamma_{m}(G) \rangle_{\mathcal{H}}  \bigg)\\
	&=\mathbb{E}\bigg( \Gamma_{m+1}(G) \bigg) \\
	&=\frac{\kappa_{m+1} (G)}{m!} \text{ (by \eqref{CuGamma})}.
	\end{align*}
	
	\noindent
	(ii) To prove part (ii) we use the following relation (see Equation (24) of \cite{MSLC0})
	\begin{align}\label{ibpformula2}
	I_2(f \otimes_{1}^{(j)} f )=\frac{1}{2^{j-1}} \bigg(\Gamma_{j}(G)-\mathbb{E}(\Gamma_{j}(G) ) \bigg),~j\geq1.
	\end{align}
	\noindent
	From \eqref{CuGamma} and \eqref{ibpformula2}, we have for $m,p\geq 1$,
	\begin{align*}
	\mathbb{E} \bigg( \Gamma_{m}(G) \Gamma_{p}(G) \bigg)&=\mathbb{E} \bigg[\bigg(\Gamma_{m}(G)-\mathbb{E} (\Gamma_{m}(G))\bigg)\bigg(\Gamma_{p}(G)-\mathbb{E}(\Gamma_{p}(G))  \bigg)   \bigg]\\
	& \quad\quad+\mathbb{E}(\Gamma_m (G)) \mathbb{E}(\Gamma_p(G) )\\
	&=2^{(m+p-2)}\mathbb{E}\bigg(	I_2(f \otimes_{1}^{(m)} f )	I_2(f \otimes_{1}^{(p)} f )  \bigg)+\frac{\kappa_m(G) \kappa_p (G)}{(m-1)!(p-1)!}\\
	&=2^{(m+p-1)} \langle f \otimes_{1}^{(m)}f,f \otimes_{1}^{(p)}f\rangle _{\mathcal{H}^{\odot 2}}+\frac{\kappa_m(G) \kappa_p (G)}{(m-1)!(p-1)!}\\
	& \quad\quad\quad\quad\quad\quad\quad\quad\quad\quad\quad\quad\quad\quad\quad\quad\quad\quad\quad\text{ (using \eqref{prductfor2})}\\
	&=2^{(m+p-1)}\langle f \otimes_{1}^{(m+p-1)}f,f \rangle _{\mathcal{H}^{\odot 2}}+\frac{\kappa_m(G) \kappa_p (G)}{(m-1)!(p-1)!}\\
	& \quad\quad\quad\quad\quad\quad\quad\quad\quad\quad\quad\quad\quad\quad\quad\quad\quad\quad\quad\text{ (using \eqref{contract1})}\\
	&=\frac{\kappa_{m+p}(G)}{(m+p-1)!}+\frac{\kappa_m(G) \kappa_p (G)}{(m-1)!(p-1)!},
	\end{align*}
	\noindent
	where the last equality follows by \eqref{sechaoscum}.
\end{proof}
\begin{thm}\label{cumulcor0}
	Let $X\sim BG(\alpha_1,p_1,\alpha_{2},p_2)$ such that $\alpha_1\alpha_2>(1+|\alpha_1-\alpha_{2}|)$. Consider the sequence $G_n=I_2(f_{n}),~n\geq 1$, such that $f_{n} \in \mathcal{H}^{\odot 2}  $. Then 
	\begin{align}
		\nonumber d_{{3}}(G_{n}, X) &\leq \frac{1}{3} \alpha_{12} \bigg\{\frac{1}{120}\kappa_{6}(G_n)- \frac{1}{12}\alpha_{13}\kappa_{5}(G_n)\\
		\nonumber &\quad+\left(\frac{1}{6}\alpha_{13}^{2} -\frac{1}{3\alpha_1 \alpha_2}  \right)\kappa_{4}(G_n)+\frac{1}{\alpha_1\alpha_2}\alpha_{13}\kappa_{3}(G_n)+\frac{1}{4}\kappa_{3}^{2}(G_n)\\
	\nonumber	&\quad\quad- \alpha_{13}\kappa_{2}(G_n)\kappa_{3}(G_n)+\frac{1}{\alpha_1^{2} \alpha_2^{2}}\kappa_{2}(G_n)+\alpha_{13}^{2}\kappa_{2}^{2}(G_n)    \bigg\}^{1/2}\\
	 &\quad\quad\quad+\frac{1}{2}\alpha_{12} \left|\frac{p_1+p_2}{\alpha_1\alpha_2} -\kappa_{2}(G_n)  \right|+ \alpha_{12}\bigg|\frac{p_1}{\alpha_{1}} -\frac{p_2}{\alpha_2}\bigg|, \label{sm7}
	\end{align}
	where $\alpha_{12}$ and $\alpha_{13}$ are defined in \eqref{alpi}.
\end{thm}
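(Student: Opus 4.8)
The plan is to specialize the general bound of Theorem \ref{bgapp:th1} to the second-chaos variable $G = G_n = I_2(f_n)$ and then replace every $\Gamma$-operator expectation by cumulants using Lemma \ref{gammopformula}. Recall from the proof of Lemma \ref{gammopformula} that a variable in the second Wiener chaos satisfies $\mathbb{E}(G_n)=0$, and from \eqref{CuGamma} that $\mathbb{E}(\Gamma_2(G_n))=\kappa_2(G_n)$. Substituting these into the three terms of \eqref{sm6} immediately simplifies the last two: the third term becomes $\alpha_{12}|\mathbb{E}(X)|=\alpha_{12}|p_1/\alpha_1 - p_2/\alpha_2|$ by \eqref{cuforbg}, and the second term becomes $\tfrac12\alpha_{12}|(p_1+p_2)/(\alpha_1\alpha_2) - \kappa_2(G_n)|$. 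These already match the corresponding terms in \eqref{sm7}, so all the real work lies in the first term.

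For the first term, write $W := \tfrac{1}{\alpha_1\alpha_2}G_n + \alpha_{13}\Gamma_2(G_n) - \Gamma_3(G_n)$. By Jensen's inequality $\mathbb{E}|W| \le (\mathbb{E}(W^2))^{1/2}$, so it suffices to expand $\mathbb{E}(W^2)$ and collect terms. Expanding the square produces the six expectations $\mathbb{E}(G_n^2)$, $\mathbb{E}(\Gamma_2(G_n)^2)$, $\mathbb{E}(\Gamma_3(G_n)^2)$, $\mathbb{E}(G_n\Gamma_2(G_n))$, $\mathbb{E}(G_n\Gamma_3(G_n))$ and $\mathbb{E}(\Gamma_2(G_n)\Gamma_3(G_n))$. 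I would evaluate each of these through Lemma \ref{gammopformula}: part (i) gives $\mathbb{E}(G_n^2)=\kappa_2(G_n)$, $\mathbb{E}(G_n\Gamma_2(G_n))=\tfrac12\kappa_3(G_n)$ and $\mathbb{E}(G_n\Gamma_3(G_n))=\tfrac16\kappa_4(G_n)$, while part (ii) gives $\mathbb{E}(\Gamma_2(G_n)^2)=\tfrac16\kappa_4(G_n)+\kappa_2(G_n)^2$, $\mathbb{E}(\Gamma_3(G_n)^2)=\tfrac1{120}\kappa_6(G_n)+\tfrac14\kappa_3(G_n)^2$ and $\mathbb{E}(\Gamma_2(G_n)\Gamma_3(G_n))=\tfrac1{24}\kappa_5(G_n)+\tfrac12\kappa_2(G_n)\kappa_3(G_n)$.

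Substituting these values and grouping by cumulant yields exactly the polynomial in $\kappa_2,\dots,\kappa_6$ appearing inside the braces of \eqref{sm7}; in particular the coefficient $\tfrac{1}{6}\alpha_{13}^2 - \tfrac{1}{3\alpha_1\alpha_2}$ of $\kappa_4(G_n)$ arises by combining the $\Gamma_2^2$ contribution with the $G_n\Gamma_3$ cross term, and the $\tfrac1{24}$ from $\mathbb{E}(\Gamma_2\Gamma_3)$ becomes the $-\tfrac{1}{12}\alpha_{13}$ coefficient of $\kappa_5(G_n)$ after the factor $-2\alpha_{13}$. Applying $\mathbb{E}|W|\le(\mathbb{E}(W^2))^{1/2}$ and multiplying by $\tfrac13\alpha_{12}$ then gives the first term of the bound, completing the proof. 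The argument involves no genuine obstacle beyond careful bookkeeping of the coefficients; the essential point, and the reason the bound can be written purely in cumulants, is that Lemma \ref{gammopformula} — which expresses products of $\Gamma$-operators through cumulants — holds precisely because $G_n$ lives in the second chaos.
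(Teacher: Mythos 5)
Your proposal is correct and follows essentially the same route as the paper: apply Theorem \ref{bgapp:th1}, use $\mathbb{E}(G_n)=0$ and $\mathbb{E}(\Gamma_2(G_n))=\kappa_2(G_n)$ for the last two terms, bound $\mathbb{E}|W|$ by $(\mathbb{E}(W^2))^{1/2}$, and convert the six expectations via Lemma \ref{gammopformula}. All the coefficient computations you indicate check out against the expression inside the braces of \eqref{sm7}.
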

\begin{proof}
	First note that
	\begin{align}\label{sm0}
		\mathbb{E}(G_n)=0;~\mathbb{E}\left( \Gamma_{2}(G_{n}) \right)=\kappa_{2}(G_{n})=Var(G_n),~n\geq 1.
	\end{align}
\noindent
Note that the first term (up to a constant) in \eqref{sm6},
\begin{align}
	\nonumber&\mathbb{E}\bigg| \frac{1}{\alpha_1\alpha_2}G_n +\alpha_{13}\Gamma_{2}(G_n)-\Gamma_{3}(G_n) \bigg|\\
\nonumber	&\quad\quad \leq \left(\mathbb{E}\bigg( \frac{1}{\alpha_1\alpha_2}G_n +\alpha_{13}\Gamma_{2}(G_n)-\Gamma_{3}(G_n) \bigg)^{2}\right)^{\frac{1}{2}}\\
\nonumber&\quad\quad=\bigg(\frac{1}{\alpha_1^{2} \alpha_2^{2}}\mathbb{E}(G_n^2)+ \alpha_{13}^{2}\mathbb{E}\left(\Gamma_{2}^{2}(G_n)  \right)\\
	\nonumber&\quad\quad\quad\quad+\mathbb{E}\left(\Gamma_{3}^{2}(G_n)  \right)+\frac{2\alpha_{13}}{\alpha_1\alpha_2}\mathbb{E}\left(G_n \Gamma_{2}(G_n) \right)\\
	&\quad\quad\quad\quad\quad-\frac{2}{\alpha_1\alpha_2}\mathbb{E}\left(G_n \Gamma_{3}(G_n) \right)-2\alpha_{13}\mathbb{E} \bigg(\Gamma_{2}(G_n)\Gamma_3(G_n)  \bigg)\bigg)^{\frac{1}{2}}.\label{sm2}
\end{align}
\noindent
Using Lemma \ref{gammopformula} in \eqref{sm2}, and rearranging the terms, we get
\begin{align}
\nonumber	&\mathbb{E}\bigg| \frac{1}{\alpha_1\alpha_2}G_n +\alpha_{13}\Gamma_{2}(G_n)-\Gamma_{3}(G_n) \bigg|\\
\nonumber&\quad \leq \bigg\{\frac{1}{120}\kappa_{6}(G_n)-\frac{1}{12}\alpha_{13}\kappa_{5}(G_n)+\left(\frac{1}{6}\alpha_{13}^{2}- \frac{1}{3\alpha_1\alpha_2}\right)\kappa_{4}(G_n)\\
	\nonumber &\quad\quad\quad\quad+\frac{1}{\alpha_1 \alpha_2}\alpha_{13}\kappa_{3}(G_n)+\frac{1}{4}\kappa_{3}^{2}(G_n)-\alpha_{13}\kappa_{2}(G_n)\kappa_{3}(G_n)+\frac{1}{\alpha_1^{2}\alpha_2^{2}}\kappa_{2}(G_n)\\
	&\quad\quad\quad\quad\quad\quad\quad+\alpha_{13}^{2}\kappa_{2}^{2}(G_n)\bigg\}^{1/2}.\label{sm4}
\end{align}

\noindent
Using \eqref{sm4} and \eqref{sm0} in the right hand side of \eqref{sm6}, we get \eqref{sm7}, as desired.
\end{proof}
\noindent
The following corollary establishes the convergence of $G_n$ to a $BG(\alpha_1,p_1,\alpha_2,p_2)$
distribution.
\vfill

\begin{cor}
Let $X\sim BG(\alpha_1,p_1,\alpha_{2},p_2)$ such that $(i)~ \alpha_1\alpha_2>(1+|\alpha_1-\alpha_{2}|) \text{ and }(ii)~ \frac{p_1}{p_2}=\frac{\alpha_1}{\alpha_2}.$ If $\kappa_{j}(G_n) \to  \kappa_{j}(X)=(j-1)!\bigg(\frac{p_1}{\alpha_1^j}+$ $(-1)^j \frac{p_2}{\alpha_2^j}  \bigg)$, $2\leq j \leq 6$, as $n\to\infty$, then 
$$d_3(G_n,X) \to 0, \text{ as } n\to \infty.$$ 
\noindent
That is, $G_n \overset{\mathcal{L}}{\to} X.$
\end{cor}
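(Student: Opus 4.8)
The plan is to deduce the corollary directly from the cumulant bound in Theorem~\ref{cumulcor0}, by showing that each of the three terms on the right-hand side of \eqref{sm7} vanishes in the limit. Throughout I use that $G_n=I_2(f_n)$ lies in the second Wiener chaos, so $\mathbb{E}(G_n)=0$, and I record the elementary consequence of hypothesis (ii): since $p_1/p_2=\alpha_1/\alpha_2$ is equivalent to $p_1/\alpha_1=p_2/\alpha_2$, we have $\mathbb{E}(X)=\kappa_1(X)=\frac{p_1}{\alpha_1}-\frac{p_2}{\alpha_2}=0$.

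The two lower-order terms are immediate. The third term $\alpha_{12}\big|\frac{p_1}{\alpha_1}-\frac{p_2}{\alpha_2}\big|$ is identically zero by hypothesis (ii). For the second term I would first check the algebraic identity $\frac{p_1+p_2}{\alpha_1\alpha_2}=\frac{p_1}{\alpha_1^2}+\frac{p_2}{\alpha_2^2}=\kappa_2(X)$, which again follows from $p_1/\alpha_1=p_2/\alpha_2$; hence the second term equals $\frac12\alpha_{12}\big|\kappa_2(X)-\kappa_2(G_n)\big|$, which tends to $0$ by the assumed convergence $\kappa_2(G_n)\to\kappa_2(X)$. Condition (i) guarantees that $\alpha_{12}$ is a finite positive constant, so these prefactors are harmless.

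It remains to control the first term. The expression inside the braces of \eqref{sm7} is a fixed polynomial $P$ in $\kappa_2(G_n),\dots,\kappa_6(G_n)$ whose coefficients depend only on $\alpha_1,\alpha_2$ (through $\alpha_{13}$ and $\alpha_1\alpha_2$). By continuity of $P$ and the hypothesis $\kappa_j(G_n)\to\kappa_j(X)$ for $2\le j\le 6$, we have $P(\kappa_2(G_n),\dots,\kappa_6(G_n))\to P(\kappa_2(X),\dots,\kappa_6(X))$, so the whole bound \eqref{sm7} converges to $\frac13\alpha_{12}\,\{P(\kappa_2(X),\dots,\kappa_6(X))\}^{1/2}$, and the proof reduces to the single nontrivial identity $P(\kappa_2(X),\dots,\kappa_6(X))=0$.

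Proving that identity is the main obstacle, and I would argue it structurally rather than by brute force. Recall from the derivation of Theorem~\ref{cumulcor0} (equations \eqref{sm2}--\eqref{sm4}) that, via Lemma~\ref{gammopformula}, $P$ is exactly the cumulant expansion of $\mathbb{E}\big(\frac{1}{\alpha_1\alpha_2}G+\alpha_{13}\Gamma_2(G)-\Gamma_3(G)\big)^2$ for a second-chaos variable $G$. Taking $G=\sum_j\lambda_{f,j}(Z_j^2-1)$ and using that $\Gamma_j(G)=2^{j-1}\sum_l\lambda_{f,l}^j Z_l^2$ for $j\ge2$ (obtained from \eqref{ibpformula2} and \eqref{sechaoscum}), the coefficient of $Z_l^2$ in that integrand is $\lambda_{f,l}\big(\frac{1}{\alpha_1\alpha_2}+2\alpha_{13}\lambda_{f,l}-4\lambda_{f,l}^2\big)$, while the deterministic part equals $-\frac{1}{\alpha_1\alpha_2}\sum_l\lambda_{f,l}$. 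For a $BG$ law realised in the second chaos the eigenvalues take only the two values $\frac{1}{2\alpha_1}$ and $-\frac{1}{2\alpha_2}$, which are precisely the roots of $4\lambda^2-2\alpha_{13}\lambda-\frac{1}{\alpha_1\alpha_2}=0$ (here one uses $\alpha_{13}^2+\frac{4}{\alpha_1\alpha_2}=(\frac{1}{\alpha_1}+\frac{1}{\alpha_2})^2$), so every $Z_l^2$-coefficient vanishes; and the deterministic part is $-\frac{1}{\alpha_1\alpha_2}\mathbb{E}(X)$, which vanishes by hypothesis (ii). Hence the random variable inside the square is identically zero and $P$ vanishes at the $BG$ cumulants. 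Since $P$ composed with the cumulant formula \eqref{cuforbg} is a polynomial in the parameters, the identity verified on the values admitting a second-chaos realisation extends to all admissible $(\alpha_1,p_1,\alpha_2,p_2)$ satisfying (i)--(ii); alternatively one may simply substitute \eqref{cuforbg} into $P$ and check the cancellation directly. With $P(\kappa_2(X),\dots,\kappa_6(X))=0$ established, all three terms of \eqref{sm7} tend to $0$, so $d_3(G_n,X)\to0$, and convergence in the metric $d_3$ yields $G_n\overset{\mathcal{L}}{\to}X$.
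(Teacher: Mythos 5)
Your proposal is correct and follows the same overall strategy as the paper: both proofs show that each of the three terms in the bound \eqref{sm7} vanishes in the limit, with the third term killed outright by hypothesis (ii), the second term reduced to $\tfrac12\alpha_{12}|\kappa_2(X)-\kappa_2(G_n)|\to0$ via the identity $\tfrac{p_1+p_2}{\alpha_1\alpha_2}=\kappa_2(X)$, and the first term handled by continuity of the cumulant polynomial. Where you genuinely diverge is in verifying the central cancellation: the paper simply asserts that substituting $\kappa_j(X)$ into the braced expression yields $\tfrac{1}{\alpha_1^2\alpha_2^2}\bigl(\tfrac{p_1}{\alpha_1}-\tfrac{p_2}{\alpha_2}\bigr)^2$ (hence $0$ under (ii)) without displaying the algebra, whereas you prove it structurally by realising the centered $BG$ law in the second chaos with eigenvalues $\tfrac{1}{2\alpha_1}$ and $-\tfrac{1}{2\alpha_2}$, observing that these are exactly the roots of $4\lambda^2-2\alpha_{13}\lambda-\tfrac{1}{\alpha_1\alpha_2}=0$, so that $\tfrac{1}{\alpha_1\alpha_2}G+\alpha_{13}\Gamma_2(G)-\Gamma_3(G)$ collapses to the constant $-\tfrac{1}{\alpha_1\alpha_2}\mathbb{E}(X)$. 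This is more illuminating: it explains \emph{why} the sixth-order polynomial vanishes, and it reproduces the paper's intermediate value $\tfrac{1}{\alpha_1^2\alpha_2^2}(\mathbb{E}X)^2$ as a by-product. One small caution on your extension step: for irrational $\alpha_1/\alpha_2$ there may be no half-integer pairs $(p_1,p_2)$ satisfying (ii) at all, so you should extend the identity $P=\tfrac{1}{\alpha_1^2\alpha_2^2}(\mathbb{E}X)^2$ by polynomiality in $(p_1,p_2)$ over \emph{all} half-integer shape parameters first, and only then impose (ii); your own computation already establishes the identity in that generality, so this is a matter of ordering the quantifiers rather than a gap.
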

\begin{proof}
	First note from the assumption (ii) that 
	\begin{align}\label{smtl0}
		\bigg|\frac{p_1}{\alpha_{1}}-\frac{p_2}{\alpha_2}  \bigg|=0 \text{ and }\kappa_{2}(X)=\left( \frac{p_1}{\alpha_1^2} +\frac{p_2}{\alpha_2^2}\right)=\frac{p_1+p_2}{\alpha_1\alpha_2}.
	\end{align}
	\noindent
	Since $\kappa_{j}(G_n) \to  \kappa_{j}(X)=(j-1)!\bigg(\frac{p_1}{\alpha_1^j}+$ $(-1)^j \frac{p_2}{\alpha_2^j}  \bigg)$, $2\leq j \leq 6$, the first quantity, of the bound for $d_3$ in \eqref{sm7}, converges to
		\begin{align}
		\nonumber &  \frac{1}{3} \alpha_{12} \bigg\{\frac{1}{120}\kappa_{6}(X)- \frac{1}{12}\alpha_{13}\kappa_{5}(X)\\
		\nonumber &\quad+\left(\frac{1}{6}\alpha_{13}^{2} -\frac{1}{3\alpha_1 \alpha_2}  \right)\kappa_{4}(X)+\frac{1}{\alpha_1\alpha_2}\alpha_{13}\kappa_{3}(X)+\frac{1}{4}\kappa_{3}^{2}(X)\\
		\nonumber	&\quad\quad- \alpha_{13}\kappa_{2}(X)\kappa_{3}(X)+\frac{1}{\alpha_1^{2} \alpha_2^{2}}\kappa_{2}(X)+\alpha_{13}^{2}\kappa_{2}^{2}(X)    \bigg\}^{1/2}\\
		\nonumber&\quad = \frac{1}{3} \alpha_{12} \bigg\{ \frac{1}{\alpha_1^{2} \alpha_2^{2}} \left(\frac{p_1}{\alpha_1}-\frac{p_2}{\alpha_2}   \right)^{2}  \bigg\}^{\frac{1}{2}}\\
		\nonumber&\quad= \frac{\alpha_{12}}{3\alpha_1 \alpha_2} \bigg(\frac{p_1}{\alpha_1}- \frac{p_2}{\alpha_2} \bigg)\\
		\nonumber&\quad = 0, 
		\end{align}	
		\noindent
		where the last equality follows by the assumption (ii). Using the above fact, together with \eqref{smtl0}, in \eqref{sm7}, the result follows.  
\end{proof}
\noindent
The following corollary immediately follows for $VG(\alpha_1,\alpha_2,p)$, $SVG(\alpha,p)$ and $La(\alpha)$ distributions. 
\begin{cor}\label{keyvgsm}
Consider the sequence $G_n=I_2(f_{n}),~n\geq 1$ such that $f_{n} \in \mathcal{H}^{\odot 2}  $.
\begin{enumerate}
	\item[(i)] Let $X_v\sim VG(\alpha_1,\alpha_2,p)$ such that $\alpha_1\alpha_2>(1+|\alpha_{1}-\alpha_{2}|)$. Let $\alpha_{12}$ and $\alpha_{13}$ be defined in \eqref{alpi}. Then,
		\begin{align}
		\nonumber d_{{3}}(G_{n}, X_v) &\leq \frac{1}{3} \alpha_{12} \bigg(\frac{1}{120}\kappa_{6}(G_n)- \frac{1}{12}\alpha_{13}\kappa_{5}(G_n)\\
		\nonumber &\quad+\left(\frac{1}{6}\alpha_{13}^{2} -\frac{1}{3\alpha_1 \alpha_2}  \right)\kappa_{4}(G_n)+\frac{1}{\alpha_1\alpha_2}\alpha_{13}\kappa_{3}(G_n)+\frac{1}{4}\kappa_{3}^{2}(G_n)\\
		\nonumber	&\quad\quad- \alpha_{13}\kappa_{2}(G_n)\kappa_{3}(G_n)+\frac{1}{\alpha_1^{2} \alpha_2^{2}}\kappa_{2}(G_n)+\alpha_{13}^{2}\kappa_{2}^{2}(G_n)    \bigg)^{1/2}\\ &\quad\quad\quad\quad+\frac{1}{2} \alpha_{12} \left|\frac{2p}{\alpha_1\alpha_2}-\kappa_{2}(G_n)  \right|+p\alpha_{12}|\alpha_{13}| \label{sm07}
		\end{align}
		
	\item [(ii)] Let $X_s\sim SVG(\alpha,p)$. Then, for $\alpha>1,$
	\begin{align}
	\nonumber	d_{{3}}(G_{n}, X_s)& \leq \frac{\alpha^{2}}{3(\alpha^{2}-1)}\left( \frac{1}{120}\kappa_{6}(G_n)-\frac{1}{3\alpha^{2}}\kappa_{4}(G_{n})+\frac{1}{4}\kappa_{3}^{2}(G_n)+\frac{1}{\alpha^{4}}\kappa_{2}(G_{n})  \right)^{\frac{1}{2}}\\
	&\quad\quad+\frac{\alpha^{2}}{2(\alpha^{2}-1)}\left|\frac{2p}{\alpha^{2}}-\kappa_{2}(G_{n})   \right|.\label{svgsm}
	\end{align}

	\item [(iii)] Let $X_l\sim La(\alpha)$. Then, for $\alpha>1$,
	\begin{align}
	\nonumber	d_{{3}}(G_{n}, X_l)& \leq \frac{\alpha^{2}}{3(\alpha^{2}-1)}\left( \frac{1}{120}\kappa_{6}(G_n)-\frac{1}{3\alpha^{2}}\kappa_{4}(G_{n})+\frac{1}{4}\kappa_{3}^{2}(G_n)+\frac{1}{\alpha^{4}}\kappa_{2}(G_{n})  \right)^{\frac{1}{2}}\\
	&\quad\quad+\frac{\alpha^{2}}{2(\alpha^{2}-1)}\left|\frac{2}{\alpha^{2}}-\kappa_{2}(G_{n})   \right|.\label{lasm}
	\end{align}		
\end{enumerate}
\end{cor}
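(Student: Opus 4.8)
The plan is to obtain all three bounds as direct specializations of the cumulant bound \eqref{sm7} in Theorem \ref{cumulcor0}, using the identifications of $VG$, $SVG$ and $La$ as members of the $BG$ family listed at the start of Section \ref{prebg}. The key observation is that the right-hand side of \eqref{sm7} depends on the parameters $(\alpha_1,p_1,\alpha_2,p_2)$ only through $\alpha_{12}$, $\alpha_{13}$, the product $\alpha_1\alpha_2$, the sum $p_1+p_2$, and the mean discrepancy $|p_1/\alpha_1-p_2/\alpha_2|$. Consequently the three assertions amount to parameter bookkeeping rather than to any fresh estimates.

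For part (i), I would set $p_1=p_2=p$, which is precisely the identification $VG(\alpha_1,\alpha_2,p)=BG(\alpha_1,p,\alpha_2,p)$. The cumulant factor on the first line of \eqref{sm7} is unaffected, since it involves only the $\alpha$'s through $\alpha_{12}$, $\alpha_{13}$ and $\alpha_1\alpha_2$. The middle term uses $p_1+p_2=2p$, producing $\frac{1}{2}\alpha_{12}\left|2p/(\alpha_1\alpha_2)-\kappa_2(G_n)\right|$, and the final term becomes $|p_1/\alpha_1-p_2/\alpha_2|=p\,|1/\alpha_1-1/\alpha_2|=p\,|\alpha_{13}|$, hence $p\,\alpha_{12}|\alpha_{13}|$; this reproduces \eqref{sm07}.

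For part (ii), I would further impose $\alpha_1=\alpha_2=\alpha$, the identification $SVG(\alpha,p)=BG(\alpha,p,\alpha,p)$. Then $\alpha_{13}=1/\alpha-1/\alpha=0$ and $\alpha_{12}=\alpha^2/(\alpha^2-1)$, while the admissibility condition $\alpha_1\alpha_2>1+|\alpha_1-\alpha_2|$ reduces exactly to $\alpha>1$, matching the stated hypothesis. Setting $\alpha_{13}=0$ annihilates every bracket term carrying a factor $\alpha_{13}$ (the $\kappa_5$, $\kappa_3$, $\kappa_2\kappa_3$ and $\kappa_2^2$ contributions) and collapses the coefficient of $\kappa_4$ to $-1/(3\alpha^2)$, leaving $\frac{1}{120}\kappa_6(G_n)-\frac{1}{3\alpha^2}\kappa_4(G_n)+\frac{1}{4}\kappa_3^2(G_n)+\frac{1}{\alpha^4}\kappa_2(G_n)$ inside the square root. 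Since $p_1/\alpha_1-p_2/\alpha_2=0$, the last term of \eqref{sm7} vanishes, and \eqref{svgsm} follows.

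For part (iii), using $La(\alpha)=SVG(\alpha,1)$, I would simply set $p=1$ in the bound \eqref{svgsm} from part (ii), so the middle term becomes $\frac{\alpha^2}{2(\alpha^2-1)}\left|2/\alpha^2-\kappa_2(G_n)\right|$ and \eqref{lasm} is obtained with no additional work. Throughout, the only point requiring care is checking that the admissibility constraint specializes consistently and that the $\alpha_{13}$-dependent terms drop out correctly; I do not anticipate any genuine analytic obstacle, as the entire corollary is a parameter reduction of Theorem \ref{cumulcor0}.
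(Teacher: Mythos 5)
Your proposal is correct and is essentially identical to the paper's own proof: the paper likewise obtains (i) by setting $p_1=p_2=p$ in \eqref{sm7}, (ii) by further setting $\alpha_1=\alpha_2=\alpha$ (so that $\alpha_{13}=0$ and $\alpha_{12}=\alpha^2/(\alpha^2-1)$), and (iii) by putting $p=1$ in \eqref{svgsm}. Your additional bookkeeping (the reduction of the admissibility condition to $\alpha>1$ and the identification of the last term as $p\,\alpha_{12}|\alpha_{13}|$) matches the intended specialization exactly.
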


\begin{rem}
(i) Eichelsbacher and Th$\ddot{\text{a}}$le \cite{key2} first obtained bounds in terms of cumulants of order two to six for the Wasserstein distance $d_W$ for the centered $VG$, $SVG$ and Laplace approximations of  $G_n=I_2(f_{n})$, $n\geq 1$, where $f_{n} \in \mathcal{H}^{\odot 2}$.  However, their bounds contain some indeterminate positive constants.

\vskip 1ex
\item [(ii)]Recently, Gaunt \cite{kk2,gauntnew} established upper bounds in the $d_W$ distance for $SVG$ and $VG$ approximations to $G_n$,with explicit constants. In particular, for the Laplace case, when $\kappa_{2}(G_n) = \frac{2}{\alpha^2}$, Gaunt \cite[Corollary 5.4]{kk2} obtained the following bound, for $\alpha>0,$ as
\begin{align}
d_{W}(G_{n}, X_l)& \leq 3\alpha^2\left( \frac{1}{120}\kappa_{6}(G_n)-\frac{1}{3\alpha^{2}}\kappa_{4}(G_{n})+\frac{1}{4}\kappa_{3}^{2}(G_n)+\frac{1}{\alpha^{4}}\kappa_{2}(G_{n})  \right)^{\frac{1}{2}}.\label{lasm2}
\end{align}	
\noindent
When $\kappa_{2}(G_n) = \frac{2}{\alpha^2}$, our bound for $\alpha>1$, is

\begin{align}
d_{{3}}(G_{n}, X_l)& \leq \frac{\alpha^{2}}{3(\alpha^{2}-1)}\left( \frac{1}{120}\kappa_{6}(G_n)-\frac{1}{3\alpha^{2}}\kappa_{4}(G_{n})+\frac{1}{4}\kappa_{3}^{2}(G_n)+\frac{1}{\alpha^{4}}\kappa_{2}(G_{n})  \right)^{\frac{1}{2}}.\label{lasm1}
\end{align}	
\noindent
Note that when $\alpha>1.054$, $3\alpha^2>\frac{\alpha^2}{3(\alpha^2-1)}$, and the bound in \eqref{lasm1} is sharper than the one in \eqref{lasm2}. Moreover, if $\mathbb{E}(G_n^j) \to \mathbb{E}(X_l^j)=\frac{j!}{\alpha^j}$, for $j=2,4,6,$ then $G_n \overset{\mathcal{L}}{\to}X_l.$ This result was proved by Eichelsbacher and Th$\ddot{\text{a}}$le \cite{key2}. 
\end{rem}
\noindent
Next we present the result for normal distribution similar to Corollary \ref{norgam0} and establish the convergence of $G_n$ to a normal $\mathcal{N}(0,\alpha^2)$ distribution.
\begin{cor}\label{corng}
Consider the sequence $G_n=I_2(f_{n}),~n\geq 1$ such that $f_{n} \in \mathcal{H}^{\odot 2}  $. Let $Z_\alpha \sim \mathcal{N}(0,\alpha^2)$. Then, for $\alpha>1$,
		\begin{align}
			\nonumber d_3(G_n,Z_\alpha) &\leq \frac{1}{3} \bigg(\frac{1}{120}\kappa_{6}(G_n)+\frac{1}{4}\kappa_{3}^{2}(G_n)  \bigg)^{\frac{1}{2}}+\frac{1}{2}|\alpha^2-\kappa_{2}(G_n)|.\label{corng1}
		\end{align}
		\noindent
	Furthermore, when $\kappa_{2}(G_n) \to \alpha^2$, and $\kappa_j(G_n) \to 0$, for $j=3,6$, we have $G_n \overset{\mathcal{L}}{\to} Z_\alpha.$
\end{cor}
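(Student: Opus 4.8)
The plan is to obtain \eqref{corng1} by specializing the centered normal bound \eqref{norgam3} to a second–chaos random variable and then converting the $\Gamma$-operators into cumulants via Lemma \ref{gammopformula}, exactly in the spirit of how Theorem \ref{cumulcor0} was derived from \eqref{sm6}. Since $G_n=I_2(f_n)$ lies in the second Wiener chaos, we have $\mathbb{E}(G_n)=0=\mathbb{E}(Z_\alpha)$ and, by \eqref{CuGamma}, $\mathbb{E}(\Gamma_2(G_n))=\kappa_2(G_n)$. Hence \eqref{norgam3} is applicable and becomes
\begin{align*}
d_3(G_n,Z_\alpha)\leq \frac{1}{3}\mathbb{E}|\Gamma_3(G_n)|+\frac{1}{2}\left|\alpha^2-\kappa_2(G_n)\right|.
\end{align*}

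First I would bound the leading term by Cauchy--Schwarz, $\mathbb{E}|\Gamma_3(G_n)|\leq(\mathbb{E}[\Gamma_3(G_n)^2])^{1/2}$, and then evaluate the second moment using Lemma \ref{gammopformula}(ii) with $m=p=3$:
\begin{align*}
\mathbb{E}[\Gamma_3(G_n)^2]=\frac{\kappa_6(G_n)}{5!}+\frac{\kappa_3(G_n)^2}{(2!)^2}=\frac{1}{120}\kappa_6(G_n)+\frac{1}{4}\kappa_3^2(G_n).
\end{align*}
Substituting these two facts into the displayed inequality yields \eqref{corng1} at once; apart from Cauchy--Schwarz and the cumulant identity of Lemma \ref{gammopformula}, no further estimate is required, so this part is essentially mechanical.

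For the convergence assertion I would observe that under $\kappa_2(G_n)\to\alpha^2$ and $\kappa_j(G_n)\to 0$ for $j=3,6$, every summand on the right of \eqref{corng1} tends to $0$, so that $d_3(G_n,Z_\alpha)\to 0$. The one point deserving a word of justification is the passage from $d_3$-convergence to convergence in law: the metric comparison recorded in Section \ref{PP2:FS}, namely $d_K(G_n,Z_\alpha)\leq c(3\sqrt{2})^{3/4}(d_3(G_n,Z_\alpha))^{1/8}$, forces $d_K(G_n,Z_\alpha)\to 0$, and since the law of $Z_\alpha$ has a continuous distribution function this gives $G_n\overset{\mathcal{L}}{\to}Z_\alpha$.

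The only genuinely substantive ingredient is the cumulant evaluation of $\mathbb{E}[\Gamma_3(G_n)^2]$, which rests entirely on Lemma \ref{gammopformula}; everything else is a specialization of the already-proved bound \eqref{norgam3} together with the standard metric inequalities. I note that an alternative route, parallel to Corollary \ref{norgam0}(i), would take the limit $p\to\infty$ in the $SVG$ cumulant bound \eqref{svgsm} along the parametrization $\alpha_p=\sqrt{2p}/\alpha$; there the delicate step would instead be the continuity $\lim_{p\to\infty}d_3(G_n,X_p)=d_3(G_n,Z_\alpha)$ justified via Theorem 7.12 of \cite{Villani}, so the direct route above is preferable precisely because it sidesteps that limiting argument.
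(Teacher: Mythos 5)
Your proposal is correct and follows essentially the same route as the paper: the authors likewise specialize the normal bound of Corollary \ref{norgam0}(i) to $G_n=I_2(f_n)$ (so that $\mathbb{E}(G_n)=0$), apply Cauchy--Schwarz to $\mathbb{E}|\Gamma_3(G_n)|$, and evaluate $\mathbb{E}[\Gamma_3^2(G_n)]$ via \eqref{gammopformula2} with $m=p=3$. Your added remark on passing from $d_3$-convergence to convergence in law is a reasonable (and slightly more careful) supplement to what the paper leaves implicit.
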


\begin{proof}
	 Note that the first term (up to a constant) in \eqref{norgam1}
	\begin{align}
		\mathbb{E}|\Gamma_{3}(G_n)| &\leq \bigg( \mathbb{E}(\Gamma_{3}^{2}(G_n)) \bigg)^{\frac{1}{2}}\\
		\nonumber&\quad\quad\quad\quad \text{(by Cauchy Schwartz inequality)}\\
		&=\bigg(\frac{1}{120}\kappa_{6}(G_n)+\frac{1}{4}\kappa_{3}^{2}(G_n)  \bigg)^{\frac{1}{2}},\label{norsm1}
	\end{align}
	\noindent
	where the last equality follows by \eqref{gammopformula2}. Also, it is known that
	\begin{align}
		\kappa_{2}(Z_\alpha)=\alpha^2 ,~\kappa_{1}(G_n)=\mathbb{E}(G_n)=0\text{ and } \mathbb{E}(\Gamma_{2}(G_n))=\kappa_{2}(G_n).\label{norsm2}
	\end{align}
	\noindent
	Using \eqref{norsm1} and \eqref{norsm2} in \eqref{norgam1}, we get
	\begin{align*}
 d_3(G_n,Z_\alpha) &\leq \frac{1}{3} \bigg(\frac{1}{120}\kappa_{6}(G_n)+\frac{1}{4}\kappa_{3}^{2}(G_n)  \bigg)^{\frac{1}{2}}+\frac{1}{2}|\alpha^2-\kappa_{2}(G_n)|.
	\end{align*}
	\noindent
This proves the result.	 
\end{proof}

\section{Bilateral gamma approximation to homogeneous sums}\label{applica}
\noindent
In this section, we establish a new limit theorem for $BG$ distributions of homogeneous sums using our results. The limit theorems related to homogeneous sums have been discussed by Eichelsbacher and Th$\ddot{\text{a}}$le \cite{key2}, Mossel {\it et al.} \cite{MOO0}, Nourdin and Peccati \cite{nourdin} and Nourdin {\it et al.} \cite{npr0}. First, we give the definition of homogeneous sums (see \cite{npr0} or \cite{nourdin}).
\begin{defn}[Homogeneous sums]\label{HS0}
	Let $[N]:=\{ 1,\ldots,N \}$, where $N\geq 2$, and $Y=\{ Y_i\}_{i\geq 1}$ be a sequence of centered independent rvs. For an integer $1 \leq q \leq N$, let $f: [N]^q \to \mathbb{{R}}$ be a symmetric function vanishing on diagonals, that is, $f(i_1,\ldots,i_q)=0$, whenever $i_k=i_j$, for some $k\neq j$. The rv
	\begin{align}
	\nonumber	H_q(N,f,Y)&=\sum_{1 \leq i_1,\ldots,i_q\leq N}f(i_1,\ldots,i_q)Y_{i_1}\ldots Y_{i_q}\\
	\nonumber	&=q!\sum_{ \{ i_1,\ldots,i_q\} \subset [N]^{q} }f(i_1,\ldots,i_q)Y_{i_1}\ldots Y_{i_q}\\
		&=q!\sum_{  1\leq i_1<\ldots<i_q\leq N  }f(i_1,\ldots,i_q)Y_{i_1}\ldots Y_{i_q}\label{HoSumsDef}
	\end{align}
	is called the homogeneous sum of order $q$, based on $f$, and on the first $N$ elements of $Y$.
\end{defn}

\noindent
Note that $\mathbb{E}(H_q(N,f,Y))=0 $, and if $\mathbb{E}(Y_i^2)=1$, for all $i\in \mathbb{N}$, then $\mathbb{E}(H_q^2(N,f,Y))= q! \| f\|_{q}^{2},$ where $\| f\|_{q}^{2}=\sum_{  1\leq i_1<\ldots<i_q\leq N  }f^{2}(i_1,\ldots,i_q).$

\vskip 1ex
\noindent
Next recall that (see \cite[p.184]{nourdin}), the quantity
\begin{align}
	\text{Inf}_i(f)&:=\sum_{ \{ i_2,\ldots,i_q\} \subset [N]^{q-1} }f^2(i,i_2,\ldots,i_q)\\
	&=\frac{1}{(q-1)!}\sum_{  1\leq i_2<\ldots<i_q\leq N  }f^2(i,i_2,\ldots,i_q)
\end{align}
\noindent
is called the influence of the variable $Y_i$. It is known that (see equation (11.2.5) of \cite{nourdin}) if $\{Y_i\}_{i\geq 1}$ is a sequence of independent rvs with $\mathbb{E}(Y_i)=0$ and $\mathbb{E}(Y_i^2)=1$ for all $i$, then
\begin{align}\label{inf0}
	\mathbb{E}\bigg[\text{Var}\bigg(H_q(N,f,Y) \mid Y_k, k\neq i \bigg)  \bigg]=(q!)^2 \text{Inf}_i(f),
\end{align}
\noindent
which quantifies the impact of the rv $Y_i$ on the overall fluctuations of $H_q(N,f,Y)$.

\vskip 1ex
\noindent
 Next let $Z=\{Z_i \}_{i\geq 1}$ be a sequence of independent and identically distributed (i.i.d.) normal $\mathcal{N}(0,1)$ rvs. Then from Remark 11.2.5 of \cite{nourdin}, we get
 \begin{align}\label{hmsum1}
 	H_q(N,f,Z)=\sum_{1 \leq i_1,\ldots,i_q\leq N}f(i_1,\ldots,i_q)Z_{i_1}\ldots Z_{i_q}=I_q(g),
 \end{align}
\noindent
where $I_q$ denotes a $q$th order multiple stochastic integral (see \eqref{expansion}) and $g$ is given by
\begin{align}
	g=\sum_{1 \leq i_1,\ldots,i_q\leq N}f(i_1,\ldots,i_q)e_{i_1}\otimes e_{i_2}\otimes \ldots \otimes e_{i_q} \in \mathcal{H}^{\odot q}.
\end{align} 

\noindent
Also, $\mathbb{E}(H_q(N,f,Z))=0$, and using \eqref{prductfor2} in \eqref{hmsum1}, we get $$\mathbb{E}\bigg( H_q^2(N,f,Z)\bigg)=q! \|g\|_{\mathcal{H}^{\otimes q} }^{2}.$$
\noindent
Next we state a lemma, which measures closeness between two distributions of homogeneous sums that are built from different sequence of independent rvs. The following lemma is essentially part 3 of Theorem 4.1 of \cite{npr0}.

\begin{lem}\label{HMSumk}
	Let $Y=\{Y_i\}_{i\geq 1}$ be a sequence of centered i.i.d. rvs and $Z=\{Z_i\}_{i\geq 1}$ be a sequence of i.i.d. normal $\mathcal{N}(0,1)$ rvs. Fix $q\geq 1$, and let $\{N_n,f_n \}_{ n\geq 1}$ be a sequence such that $\{N_n \}_{n\geq 1 }$ is an integer sequence going to infinity, and each $f_n:[N]^q \to \mathbb{R}$ is a symmetric function that vanishes on diagonals. Then
	\begin{align}\label{homosumlimk}
	d_3(H_q(N_n,f_n,Y),H_q(N_n,f_n,Z)) \leq q!(30 \rho)^q  \sqrt{\max_{1\leq i \leq N_n} \text{Inf}_i (f_n)} ,
	\end{align}	
	where $\rho :=\sup_{i \geq 1} \mathbb{E}[ |Y_i|^3] < \infty$.  
\end{lem}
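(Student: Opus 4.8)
The natural strategy is a Lindeberg-type replacement (telescoping swap) argument, exactly in the spirit of the proof of part 3 of Theorem 4.1 of \cite{npr0}, combined with hypercontractivity for low-degree homogeneous sums. Abbreviate $N=N_n$, $f=f_n$, and write $Q(x_1,\ldots,x_N)=H_q(N,f,x)$, which is a \emph{multilinear} polynomial in its arguments because $f$ vanishes on diagonals. Realizing $Y$ and $Z$ independently on a common product space, I set, for $0\le i\le N$, $W^{(i)}=(Z_1,\ldots,Z_i,Y_{i+1},\ldots,Y_N)$, so $W^{(0)}$ is the all-$Y$ vector and $W^{(N)}$ the all-$Z$ vector. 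For a test function $h\in\mathcal{W}_3$ I would telescope
\[
\mathbb{E}[h(Q(W^{(0)}))]-\mathbb{E}[h(Q(W^{(N)}))]=\sum_{i=1}^{N}\Big(\mathbb{E}[h(Q(W^{(i-1)}))]-\mathbb{E}[h(Q(W^{(i)}))]\Big),
\]
and bound each summand, in which only the $i$-th coordinate is swapped from $Y_i$ to $Z_i$.

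For the $i$-th swap, multilinearity lets me write $Q=S_i+x_iU_i$, where $S_i$ and $U_i$ depend only on $\{Z_1,\ldots,Z_{i-1},Y_{i+1},\ldots,Y_N\}$ and hence are independent of $Y_i$ and $Z_i$; here $U_i=\partial Q/\partial x_i$ is, up to the factor $q$, a homogeneous sum of order $q-1$ built from $f(i,\cdot)$. A third-order Taylor expansion about $S_i$ gives $h(S_i+x_iU_i)=h(S_i)+x_iU_ih'(S_i)+\tfrac12 x_i^2U_i^2h''(S_i)+\tfrac16 x_i^3U_i^3h'''(\eta)$. Taking expectations and using $\mathbb{E}Y_i=\mathbb{E}Z_i=0$, $\mathbb{E}Y_i^2=\mathbb{E}Z_i^2=1$, together with the independence of $(S_i,U_i)$ from $x_i$, the zeroth, first and second order terms coincide for $x_i=Y_i$ and $x_i=Z_i$ and cancel in the difference. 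With $\|h'''\|\le1$ the remaining third-order remainders yield
\[
\big|\mathbb{E}[h(Q(W^{(i-1)}))]-\mathbb{E}[h(Q(W^{(i)}))]\big|\le \tfrac16\big(\mathbb{E}|Y_i|^3+\mathbb{E}|Z_i|^3\big)\,\mathbb{E}|U_i|^3\le \tfrac12\rho\,\mathbb{E}|U_i|^3,
\]
where I used $\mathbb{E}|Y_i|^3\le\rho$, $\mathbb{E}|Z_i|^3=2\sqrt{2/\pi}<2$, and $\rho\ge(\mathbb{E}Y_i^2)^{3/2}=1$.

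It remains to control $\sum_i\mathbb{E}|U_i|^3$. Since $U_i$ is a degree-$(q-1)$ multilinear form in independent, centered, unit-variance variables whose third absolute moments are bounded by a constant multiple of $\rho$, the hypercontractivity inequality for homogeneous sums (as in \cite{MOO0,npr0}) supplies $\mathbb{E}|U_i|^3\le (c\rho)^{q-1}(\mathbb{E}U_i^2)^{3/2}$ for an absolute constant $c$. By \eqref{inf0} the conditional-variance identity reads $\mathbb{E}(U_i^2)=(q!)^2\,\text{Inf}_i(f)$, so $\mathbb{E}|U_i|^3\le (c\rho)^{q-1}(q!)^3\,\text{Inf}_i(f)^{3/2}$. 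Summing and extracting one factor,
\[
\sum_{i=1}^{N}\mathbb{E}|U_i|^3\le (c\rho)^{q-1}(q!)^3\Big(\max_{1\le i\le N}\sqrt{\text{Inf}_i(f)}\Big)\sum_{i=1}^{N}\text{Inf}_i(f),
\]
and $\sum_i\text{Inf}_i(f)$ equals a fixed combinatorial multiple of $\mathbb{E}[H_q^2]$ (a constant under the standing unit-variance normalization), which collapses the right-hand side to a constant times $q!\,(C\rho)^{q}\sqrt{\max_i\text{Inf}_i(f)}$. Taking the supremum over $h\in\mathcal{W}_3$ produces the claimed bound for $d_3$.

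The main obstacle is the hypercontractivity step: one needs a clean third-moment estimate $\mathbb{E}|U_i|^3\le(c\rho)^{q-1}(\mathbb{E}U_i^2)^{3/2}$ valid for the \emph{mixed} sequence $(Z_1,\ldots,Z_{i-1},Y_{i+1},\ldots,Y_N)$, and its constant, after combining with the $(q!)^3$ factor and the value of $\sum_i\text{Inf}_i(f)$, must be absorbed into $(30\rho)^q$. Confirming the precise numerical constant $30$ (and that the hypercontractive constant for order-$(q-1)$ forms propagates multiplicatively in $q$) is the delicate bookkeeping; the Taylor/telescoping part is routine once the swap decomposition $Q=S_i+x_iU_i$ and the identity $\mathbb{E}(U_i^2)=(q!)^2\text{Inf}_i(f)$ are in place.
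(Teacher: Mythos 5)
The paper's own proof of this lemma is a one-line citation: part 3 of Theorem 4.1 of \cite{npr0} already gives
$|\mathbb{E}(h(H_q(N_n,f_n,Y)))-\mathbb{E}(h(H_q(N_n,f_n,Z)))| \leq \|h^{(3)}\|\,(30\rho)^q\, q!\,\sqrt{\max_{1\leq i\leq N_n}\mathrm{Inf}_i(f_n)}$ for any thrice differentiable $h$ with bounded third derivative, and one then takes the supremum over $h\in\mathcal{W}_3$, where $\|h^{(3)}\|\leq 1$. What you have written is instead a re-derivation of that cited invariance principle by the Lindeberg replacement method, which is indeed how the result is proved in \cite{npr0} and \cite{MOO0}. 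Your outline of that argument is faithful: the multilinear decomposition $Q=S_i+x_iU_i$, the cancellation of the zeroth-, first- and second-order Taylor terms using matching first and second moments, the third-order remainder bounded via $\|h'''\|\leq 1$, the identity $\mathbb{E}(U_i^2)=(q!)^2\,\mathrm{Inf}_i(f)$, and the extraction of $\sqrt{\max_i\mathrm{Inf}_i(f)}$ from $\sum_i\mathrm{Inf}_i(f)^{3/2}$ are all the right moves.

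As a self-contained proof of the lemma \emph{as stated}, however, there is a genuine gap, and it sits exactly where you flag it. The lemma's entire content beyond the qualitative invariance principle is the explicit constant $q!(30\rho)^q$; your argument only delivers ``a constant times $q!(C\rho)^q\sqrt{\max_i\mathrm{Inf}_i(f)}$'' with $C$ unspecified, because the hypercontractive third-moment estimate $\mathbb{E}|U_i|^3\leq(c\rho)^{q-1}(\mathbb{E}U_i^2)^{3/2}$ for the mixed sequence $(Z_1,\ldots,Z_{i-1},Y_{i+1},\ldots,Y_N)$ is asserted rather than proved, and the propagation of its constant through the $(q!)^3$ factor is not carried out. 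In addition, your final step quietly uses that $\sum_i\mathrm{Inf}_i(f)$ is bounded by an absolute constant, which requires a unit-variance normalization of $H_q(N_n,f_n,\cdot)$ that is not among the lemma's hypotheses (it is a standing assumption in the framework of \cite{npr0}). Unless you are prepared to reproduce the bookkeeping of \cite{npr0} in full, the efficient and complete proof is the paper's: quote the bound of part 3 of Theorem 4.1 of \cite{npr0} verbatim and observe that $\|h^{(3)}\|\leq 1$ for every $h\in\mathcal{W}_3$.
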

\begin{proof}
 Since by part 3 of Theorem 4.1 of \cite{npr0}, we have 
\begin{align*}
|\mathbb{E}(h(H_q(N_n,f_n,Y)))-\mathbb{E}(h(H_q(N_n,f_n,Z)))| \leq \|h^{(3)} \| (30 \rho)^q q! \sqrt{\max_{1\leq i \leq N_n} \text{Inf}_i (f_n)},
\end{align*}
\noindent
where $h:\mathbb{{R}}\to \mathbb{{R}}$ is a thrice differentiable function such that $\|h^{(3)} \| < \infty$. Hence for every $h \in \mathcal{W}_3,$ (see \eqref{fs1}), the bound \eqref{homosumlimk} holds.	
\end{proof}

\noindent
Next, we establish an another important lemma, which is helpful for Theorem \ref{CtoHSk}. Let us first define $\widetilde{\kappa}_j:=\kappa_j(G_n)-\kappa_j(X)$, $\kappa_{ij}(G_n):=\kappa_i(G_n)\kappa_j(G_n)$ and $\kappa_{ij}(X):=\kappa_i(X)\kappa_j(X)$, $i,j\geq 2$.
\begin{lem}\label{lemmarp}
	Consider the sequence $G_n=I_2(g_{n})$, $n\geq 1$, where $g_{n} \in \mathcal{H}^{\odot 2}$. Let $X\sim BG(\alpha_1,p_1,\alpha_{2},p_2)$ such that $\alpha_1\alpha_2>(1+|\alpha_1-\alpha_{2}|)$ and $p_1\alpha_2=p_2\alpha_1$ (so that $\mathbb{E}(X)=0$). Then, 
	\begin{align}
	\nonumber(i)~~ d_{{3}}(G_{n}, X) &\leq \frac{1}{3} \alpha_{12} \bigg(\frac{1}{\sqrt{120}}\sqrt{|\widetilde{\kappa}_{6}|}+ \frac{\sqrt{|\alpha_{13}|}}{2\sqrt{3}}\sqrt{|\widetilde{\kappa}_{5}|}+\sqrt{\left|\frac{1}{6}\alpha_{13}^{2} -\frac{1}{3\alpha_1 \alpha_2}  \right|}\sqrt{|\widetilde{\kappa}_{4}|}\\
	\nonumber &\quad+\frac{\sqrt{|\alpha_{13}|}}{\sqrt{\alpha_1\alpha_2}}\sqrt{|\widetilde{\kappa}_{3}|}+\frac{1}{2}\sqrt{|\widetilde{\kappa}_{3}|}+\frac{1}{\sqrt{2}}\sqrt{|\widetilde{\kappa}_{3}\kappa_{3}(X)|}\\
	\nonumber	&\quad\quad\quad+ \sqrt{|\alpha_{13}|}\sqrt{|\kappa_{23}(G_n)-\kappa_{23}(X)|}+\frac{1}{\alpha_1 \alpha_2}\sqrt{|\widetilde{\kappa}_{2}|}\\
	&\quad\quad\quad\quad+|\alpha_{13}||\widetilde{\kappa}_{2}| + \sqrt{2}|\alpha_{13}|\sqrt{|\widetilde{\kappa}_{2}\kappa_{2}(X) |}  \bigg)+\frac{1}{2}\alpha_{12} \left|\widetilde{\kappa}_{2}  \right|, \label{rp01}
	\end{align}
	\noindent
	where $\alpha_{12}$ and $\alpha_{13}$ are defined in \eqref{alpi}.
	
	\vskip 1ex
	\noindent
	$(ii)$ Furthermore, when $\kappa_{j}(G_n) \to \kappa_j(X)$, for $2\leq j \leq 6$, $d_3(G_n,X) \to 0, \text{ as }n\to \infty.$ That is, $G_n \overset{L}{\to} X.$
\end{lem}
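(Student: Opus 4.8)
The plan is to obtain \eqref{rp01} by specializing the cumulant bound \eqref{sm7} of Theorem \ref{cumulcor0} and then rewriting its right-hand side in terms of the cumulant differences $\widetilde{\kappa}_j$. First I would record the simplifications forced by the hypothesis $p_1\alpha_2 = p_2\alpha_1$. Since this is equivalent to $p_1/\alpha_1 = p_2/\alpha_2$, we have $\mathbb{E}(X) = p_1/\alpha_1 - p_2/\alpha_2 = 0$, so the last term $\alpha_{12}|p_1/\alpha_1 - p_2/\alpha_2|$ of \eqref{sm7} vanishes; moreover $\kappa_2(X) = p_1/\alpha_1^2 + p_2/\alpha_2^2 = (p_1+p_2)/(\alpha_1\alpha_2)$, so the middle term of \eqref{sm7} becomes exactly $\tfrac12\alpha_{12}|\widetilde{\kappa}_2|$, matching the last summand of \eqref{rp01}.

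The heart of the matter is the square-root term. Writing $Q(G_n)$ for the polynomial in $\kappa_2(G_n),\dots,\kappa_6(G_n)$ under the root in \eqref{sm7}, I would first establish the key cancellation: the same polynomial evaluated at the cumulants of $X$ vanishes, $Q(X)=0$. This is exactly the computation carried out in the corollary following Theorem \ref{cumulcor0}, where $Q$ evaluated at the $BG$ cumulants \eqref{cuforbg} is shown to equal $\frac{1}{\alpha_1^2\alpha_2^2}(p_1/\alpha_1 - p_2/\alpha_2)^2$, which is $0$ under our hypothesis; a direct expansion confirms that all contributing monomials cancel. Hence $Q(G_n) = Q(G_n) - Q(X)$, and I would expand this difference summand by summand. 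For the terms linear in a single cumulant ($\kappa_6,\kappa_5,\kappa_4,\kappa_3,\kappa_2$) the difference is simply the coefficient times $\widetilde{\kappa}_j$; for the quadratic and product terms I would substitute $\kappa_j(G_n)=\kappa_j(X)+\widetilde{\kappa}_j$, so that $\kappa_3^2(G_n)-\kappa_3^2(X)=\widetilde{\kappa}_3^2+2\widetilde{\kappa}_3\kappa_3(X)$ and $\kappa_2^2(G_n)-\kappa_2^2(X)=\widetilde{\kappa}_2^2+2\widetilde{\kappa}_2\kappa_2(X)$, while keeping the cross term in the form $\kappa_{23}(G_n)-\kappa_{23}(X)$.

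With $Q(G_n)$ thus written as a finite signed sum whose summands are constants times $\widetilde{\kappa}_j$, $\widetilde{\kappa}_j^2$, $\widetilde{\kappa}_j\kappa_j(X)$, or $\kappa_{23}(G_n)-\kappa_{23}(X)$, I would apply subadditivity of the square root, $\sqrt{\sum_i c_i}\le\sum_i\sqrt{|c_i|}$, which is legitimate because $Q(G_n)=\mathbb{E}\big[(\tfrac{1}{\alpha_1\alpha_2}G_n+\alpha_{13}\Gamma_2(G_n)-\Gamma_3(G_n))^2\big]\ge 0$ by the derivation of \eqref{sm2}. Distributing the root over the individual summands reproduces the bracket of \eqref{rp01}, and multiplying by $\tfrac13\alpha_{12}$ and restoring the middle term yields part (i). For part (ii), the assumption $\kappa_j(G_n)\to\kappa_j(X)$ for $2\le j\le 6$ forces every $\widetilde{\kappa}_j\to 0$ and, by continuity of products, $\kappa_{23}(G_n)\to\kappa_{23}(X)$; hence each summand on the right of \eqref{rp01} tends to $0$, giving $d_3(G_n,X)\to 0$. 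Convergence in law then follows from the domination of the Kolmogorov distance by a power of $d_3$ recorded in Section \ref{PP2:FS}.

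I expect the main obstacle to be the bookkeeping rather than any deep idea: one must verify the cancellation $Q(X)=0$ cleanly and then keep careful track, after the square root is distributed, of which summands carry a factor $\sqrt{|\widetilde{\kappa}_j|}$ (the pieces linear in a cumulant) versus $|\widetilde{\kappa}_j|$ (the genuinely quadratic pieces $\widetilde{\kappa}_j^2$). Getting every numerical constant in the bracket of \eqref{rp01} to match requires patience but no new tools beyond Theorem \ref{cumulcor0} and the elementary inequality $\sqrt{\sum_i|c_i|}\le\sum_i\sqrt{|c_i|}$.
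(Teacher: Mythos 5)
Your proposal is correct and follows essentially the same route as the paper: specialize \eqref{sm7} under $p_1\alpha_2=p_2\alpha_1$, observe that the polynomial under the square root vanishes at the cumulants of $X$, rewrite it in terms of the $\widetilde{\kappa}_j$ (the paper uses the identity $a^2=(a-b)^2+2b(a-b)+b^2$ for the quadratic pieces, exactly as you do), and distribute the square root by subadditivity. Your remark about tracking which summands yield $\sqrt{|\widetilde{\kappa}_j|}$ versus $|\widetilde{\kappa}_j|$ is well taken — consistency with the term $|\alpha_{13}||\widetilde{\kappa}_2|$ coming from $\alpha_{13}^2\widetilde{\kappa}_2^2$ suggests the summand $\tfrac12\sqrt{|\widetilde{\kappa}_3|}$ in \eqref{rp01} should read $\tfrac12|\widetilde{\kappa}_3|$, since it arises from $\tfrac14\widetilde{\kappa}_3^2$.
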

\begin{proof}
	First note that
	\begin{align}
	\nonumber\left|\frac{p_1+p_2}{\alpha_1\alpha_2} -\kappa_{2}(G_n)  \right|&=\left|\frac{p_1+p_2}{\alpha_1\alpha_2} -(\kappa_{2}(G_n)-\kappa_{2}(X))-\kappa_{2}(X)  \right|\\
	\nonumber&=\left|\kappa_{2}(G_n)-\kappa_{2}(X)  \right|~~ \bigg( \text{since } \kappa_{2}(X)=\frac{p_1+p_2}{\alpha_1\alpha_2}\bigg)\\
	&=|\widetilde{\kappa}_{2}|	.\label{rp002}
	\end{align}
	\noindent
	Recall (see \eqref{cuforbg}) that, for $j\geq1$,
	\begin{align}\label{rp003}
	\kappa_j(X)=(j-1)!\left(\frac{p_1}{\alpha_1^j}+(-1)^j \frac{p_2}{\alpha_2^j}  \right).
	\end{align}
	\noindent
	Consider next the term inside the curly bracket of \eqref{sm7}. That is, let 
	\begin{align}
	\nonumber\Delta_n:=&\frac{1}{120}\kappa_{6}(G_n)- \frac{1}{12}\alpha_{13}\kappa_{5}(G_n)\\
	\nonumber &\quad+\left(\frac{1}{6}\alpha_{13}^{2} -\frac{1}{3\alpha_1 \alpha_2}  \right)\kappa_{4}(G_n)+\frac{1}{\alpha_1\alpha_2}\alpha_{13}\kappa_{3}(G_n)+\frac{1}{4}\kappa_{3}^{2}(G_n)\\
	\nonumber	&\quad\quad- \alpha_{13}\kappa_{2}(G_n)\kappa_{3}(G_n)+\frac{1}{\alpha_1^{2} \alpha_2^{2}}\kappa_{2}(G_n)+\alpha_{13}^{2}\kappa_{2}^{2}(G_n)  \\
	\nonumber	=&\bigg(\frac{1}{120}\widetilde{\kappa}_{6}- \frac{1}{12}\alpha_{13}\widetilde{\kappa}_{5}+\left(\frac{1}{6}\alpha_{13}^{2} -\frac{1}{3\alpha_1 \alpha_2}  \right)\widetilde{\kappa}_{4}\\
	\nonumber &\quad+\frac{1}{\alpha_1\alpha_2}\alpha_{13}\widetilde{\kappa}_{3}+\frac{1}{4}\widetilde{\kappa}_{3}^{2}+\frac{1}{2}\kappa_{3}(X)\widetilde{\kappa}_3- \alpha_{13}\bigg(\kappa_{23}(G_n)-\kappa_{23}(X)\bigg)\\
	\nonumber	&\quad\quad+\frac{1}{\alpha_1^{2} \alpha_2^{2}}\widetilde{\kappa}_{2}+\alpha_{13}^{2}\widetilde{\kappa}_{2}^{2}+2\alpha_{13}^{2}\kappa_{2}(X)\widetilde{\kappa}_{2} \bigg)+ \bigg\{\frac{1}{120}\kappa_{6}(X)- \frac{1}{12}\alpha_{13}\kappa_{5}(X)\\
	\nonumber &\quad\quad\quad\quad\quad+\left(\frac{1}{6}\alpha_{13}^{2} -\frac{1}{3\alpha_1 \alpha_2}  \right)\kappa_{4}(X)+\frac{1}{\alpha_1\alpha_2}\alpha_{13}\kappa_{3}(X)+\frac{1}{4}\kappa_{3}^{2}(X)\\
	&\quad\quad\quad\quad\quad\quad- \alpha_{13}\kappa_{2}(X)\kappa_{3}(X)+\frac{1}{\alpha_1^{2} \alpha_2^{2}}\kappa_{2}(X)+\alpha_{13}^{2}\kappa_{2}^{2}(X)\bigg\} ,\label{curly}
	\end{align}
	
	\noindent
	where the last equality follows by rearranging the terms and using the fact $a^2=(a-b)^2+2b(a-b)+b^2.$ By \eqref{rp003}, we get the terms inside the curly bracket of \eqref{curly} vanishes, and so
	\begin{align}
	\nonumber	\Delta_n=&\frac{1}{120}\widetilde{\kappa}_{6}- \frac{1}{12}\alpha_{13}\widetilde{\kappa}_{5}\\
	\nonumber&\quad+\left(\frac{1}{6}\alpha_{13}^{2} -\frac{1}{3\alpha_1 \alpha_2}  \right)\widetilde{\kappa}_{4}+\frac{1}{\alpha_1\alpha_2}\alpha_{13}\widetilde{\kappa}_{3}+\frac{1}{4}\widetilde{\kappa}_{3}^{2}\\
	\nonumber	&\quad\quad+\frac{1}{2}\kappa_{3}(X)\widetilde{\kappa}_3- \alpha_{13}\bigg(\kappa_{23}(G_n)-\kappa_{23}(X)\bigg)\\
	\label{rp004}&\quad\quad\quad+\frac{1}{\alpha_1^{2} \alpha_2^{2}}\widetilde{\kappa}_{2}+\alpha_{13}^{2}\widetilde{\kappa}_{2}^{2}+2\alpha_{13}^{2}\kappa_{2}(X)\widetilde{\kappa}_{2}. 
	\end{align} 
	\noindent
	Substituting \eqref{rp002} and \eqref{rp004} in \eqref{sm7}, and then using the triangle inequality, we get \eqref{rp01}, as desired.
\end{proof}
\begin{rem}
	From \eqref{rp01}, if $\kappa_j(G_n) \to \kappa_j(X),~j=2,3,4$, then
	\begin{align}
	\nonumber d_{{3}}(G_{n}, X) &\leq \frac{1}{3} \alpha_{12} \bigg(\frac{1}{\sqrt{120}}\sqrt{|\widetilde{\kappa}_{6}|}+ \frac{\sqrt{|\alpha_{13}|}}{2\sqrt{3}}\sqrt{|\widetilde{\kappa}_{5}|}\bigg), 
	\end{align}
	\noindent
	where $\alpha_{12}$ and $\alpha_{13}$ are defined in \eqref{alpi}.
\end{rem}
\noindent
Using the above two lemmas, we can now deduce an upper bound for $BG$ approximation to homogeneous sums of independent rvs.
\begin{thm}\label{CtoHSk}
	Let $\{N_n,f_n  \}_{n\geq 1}$, $Y=\{Y_i\}_{i\geq 1}$ be the sequences as stated in Lemma \ref{HMSumk} and $H_2(N_n,f_n,Y)$ be defined as in \eqref{HoSumsDef}. Consider the sequence $G_n=I_2(g_{n})$, $n\geq 1$, where $g_{n}=\sum_{1 \leq i_1,i_2\leq N_n}f_n(i_1,i_2)e_{i_1}\otimes e_{i_2}\in \mathcal{H}^{\odot 2}$. Let $X\sim BG(\alpha_1,p_1,\alpha_{2},p_2)$ with $\alpha_1\alpha_2>(1+|\alpha_1-\alpha_{2}|)$ and $p_1\alpha_2=p_2\alpha_1$ (so that $\mathbb{E}(X)=0$). If $\rho =\sup_{i \geq 1} \mathbb{E}[ |Y_i|^3] < \infty$, then
	\begin{align}
\nonumber	d_3(H_2(N_n,f_n,Y), X) &\leq 2(30 \rho)^2 \sqrt{\max_{1\leq i \leq N_n} \text{Inf}_i (f_n)}+\frac{1}{3} \alpha_{12} \bigg(\frac{\sqrt{|\widetilde{\kappa}_{6}|}}{\sqrt{120}}+ \frac{\sqrt{|\alpha_{13}|}}{2\sqrt{3}}\sqrt{|\widetilde{\kappa}_{5}|}\\
\nonumber &\quad\quad+\sqrt{\left|\frac{1}{6}\alpha_{13}^{2} -\frac{1}{3\alpha_1 \alpha_2}  \right|}\sqrt{|\widetilde{\kappa}_{4}|}+\frac{\sqrt{|\alpha_{13}|}}{\sqrt{\alpha_1\alpha_2}}\sqrt{|\widetilde{\kappa}_{3}|}\\
\nonumber&\quad\quad\quad+\frac{1}{2}\sqrt{|\widetilde{\kappa}_{3}|}+\frac{1}{\sqrt{2}}\sqrt{|\kappa_{3}(X) \widetilde{\kappa}_{3}|}\\
\nonumber	&\quad\quad\quad\quad+ \sqrt{|\alpha_{13}|}\sqrt{|\kappa_{23}(G_n)-\kappa_{23}(X)|}+\frac{1}{\alpha_1 \alpha_2}\sqrt{|\widetilde{\kappa}_{2}|}\\
&\quad\quad\quad\quad\quad+|\alpha_{13}||\widetilde{\kappa}_{2}| + \sqrt{2}|\alpha_{13}|\sqrt{|\kappa_{2}(X) \widetilde{\kappa}_{2}|}  \bigg)+\frac{1}{2}\alpha_{12} \left|\widetilde{\kappa}_{2}  \right|,\label{newhombd}
	\end{align} 
  \noindent
  where $\alpha_{12}$ and $\alpha_{13}$ are defined in \eqref{alpi}. 
\end{thm}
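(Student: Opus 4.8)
The plan is to prove the bound by a single triangle-inequality decomposition that interpolates between the given homogeneous sum and the second-Wiener-chaos variable $G_n$, using a sequence $Z=\{Z_i\}_{i\geq 1}$ of i.i.d.\ $\mathcal{N}(0,1)$ random variables. Concretely, I would write
\[
d_3(H_2(N_n,f_n,Y),X)\leq d_3\bigl(H_2(N_n,f_n,Y),H_2(N_n,f_n,Z)\bigr)+d_3\bigl(H_2(N_n,f_n,Z),X\bigr)
\]
and then bound the two terms separately by the two lemmas already available in the excerpt.

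For the first term I would invoke Lemma \ref{HMSumk} with $q=2$. Since $\rho=\sup_{i\geq 1}\mathbb{E}[|Y_i|^3]<\infty$ by hypothesis, the lemma gives directly
\[
d_3\bigl(H_2(N_n,f_n,Y),H_2(N_n,f_n,Z)\bigr)\leq 2!\,(30\rho)^2\sqrt{\max_{1\leq i\leq N_n}\text{Inf}_i(f_n)}=2(30\rho)^2\sqrt{\max_{1\leq i\leq N_n}\text{Inf}_i(f_n)},
\]
which is exactly the leading term on the right-hand side of \eqref{newhombd}.

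The second term is where the chaos structure enters. Using the identity \eqref{hmsum1} with the stated kernel $g_n=\sum_{1\leq i_1,i_2\leq N_n}f_n(i_1,i_2)\,e_{i_1}\otimes e_{i_2}\in\mathcal{H}^{\odot 2}$, one has $H_2(N_n,f_n,Z)=I_2(g_n)=G_n$, so that the intermediate variable is precisely the second-chaos random variable to which Lemma \ref{lemmarp} applies. Since the theorem assumes $\alpha_1\alpha_2>1+|\alpha_1-\alpha_2|$ together with the centering condition $p_1\alpha_2=p_2\alpha_1$, the hypotheses of Lemma \ref{lemmarp}(i) are met, and that lemma bounds $d_3(G_n,X)$ by the bracketed cumulant expression appearing in \eqref{rp01}, which is exactly the remaining part of the right-hand side of \eqref{newhombd}.

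Adding the two bounds yields \eqref{newhombd}, so the argument is a direct assembly of Lemmas \ref{HMSumk} and \ref{lemmarp} rather than any fresh estimate. The only points requiring care are verifying that the standing hypotheses of the theorem coincide with those of both lemmas---in particular that $p_1\alpha_2=p_2\alpha_1$ is the same centering assumption under which Lemma \ref{lemmarp} was established---and checking that with $q=2$ the prefactor $q!=2$ in Lemma \ref{HMSumk} matches the constant displayed in \eqref{newhombd}. I expect this bookkeeping step, rather than any analytic difficulty, to be the only genuine obstacle.
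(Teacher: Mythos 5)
Your proposal is correct and coincides with the paper's own proof: the authors use exactly the same triangle-inequality decomposition through $H_2(N_n,f_n,Z)=I_2(g_n)=G_n$, bound the first term by Lemma \ref{HMSumk} with $q=2$ (giving the prefactor $2!(30\rho)^2$) and the second by \eqref{rp01} of Lemma \ref{lemmarp}. The bookkeeping you flag (matching hypotheses and the $q!$ constant) checks out, so nothing further is needed.
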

\begin{proof}
	Let $Z=\{Z_i\}_{i\geq 1}$ be a sequence of i.i.d. normal $\mathcal{N}(0,1)$ rvs and $H_2(N_n,f_n,Z)$ be defined as Definition \ref{HS0}. Then by the triangle inequality, we have
	\begin{align}
	\nonumber	d_3(H_2(N_n,f_n,Y), X) &\leq d_3(H_2(N_n,f_n,Y), H_2(N_n,f_n,Z))+d_3(H_2(N_n,f_n,Z), X)\\
	\label{hmsum0k}	&=d_3(H_2(N_n,f_n,Y), H_2(N_n,f_n,Z))+d_3(I_2(g_n), X),
	\end{align}
	\noindent
	where the last term of \eqref{hmsum0k} is followed by \eqref{hmsum1}. Using \eqref{homosumlimk} and \eqref{rp01} in \eqref{hmsum0k}, we get \eqref{newhombd}, as desired.
\end{proof}
\noindent
The following corollary gives a new limit theorem for homogeneous sums so that the limiting distribution belongs to the class of $BG$ distributions.
\begin{cor}\label{CtoHS}
Assume the conditions of Theorem \ref{CtoHSk} hold. If (i) $\max_{1\leq i \leq N_n} \text{Inf}_i (f_n) \to 0$, and (ii) $\kappa_j(G_n) \to \kappa_j(X),~2\leq j\leq 6$, then

	\begin{align}
	d_3(H_2(N_n,f_n,Y), X) \to 0 \text{ as } n\to \infty.
	\end{align}
	That is, $H_2(N_n,f_n,Y)\overset{L}{\to} X,$ as $n \to \infty$.  
\end{cor}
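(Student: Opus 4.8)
The plan is to read off the conclusion directly from the quantitative bound \eqref{newhombd} of Theorem \ref{CtoHSk}, since all the hard analytic work (the two triangle-inequality steps, the homogeneous-sum estimate from Lemma \ref{HMSumk}, and the cumulant expansion from Lemma \ref{lemmarp}) has already been carried out there. First I would observe that the right-hand side of \eqref{newhombd} separates into two parts: the leading term $2(30\rho)^2\sqrt{\max_{1\leq i\leq N_n}\text{Inf}_i(f_n)}$, which measures the discrepancy between the homogeneous sums built from $Y$ and from a Gaussian sequence, and a second block (scaled by the fixed constants $\alpha_{12}$ and $\alpha_{13}$) consisting entirely of nonnegative powers of the differences $\widetilde{\kappa}_j=\kappa_j(G_n)-\kappa_j(X)$ together with the cross term $\kappa_{23}(G_n)-\kappa_{23}(X)$.

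Next I would dispose of the two parts in turn. Under hypothesis (i), $\max_{1\leq i\leq N_n}\text{Inf}_i(f_n)\to 0$, so the leading term tends to $0$ because $\rho<\infty$ is fixed. Under hypothesis (ii), $\kappa_j(G_n)\to\kappa_j(X)$ for $2\leq j\leq 6$, hence $\widetilde{\kappa}_j\to 0$ for each such $j$; consequently every summand of the form $\sqrt{|\widetilde{\kappa}_j|}$, $|\widetilde{\kappa}_2|$, $\sqrt{|\kappa_3(X)\widetilde{\kappa}_3|}$ and $\sqrt{|\kappa_2(X)\widetilde{\kappa}_2|}$ tends to $0$, the last two because $\kappa_2(X)$ and $\kappa_3(X)$ are fixed constants determined by $(\alpha_1,p_1,\alpha_2,p_2)$. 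The only term requiring a momentary pause is $\sqrt{|\kappa_{23}(G_n)-\kappa_{23}(X)|}$, where $\kappa_{23}(G_n)=\kappa_2(G_n)\kappa_3(G_n)$; here I would simply note that a product of two convergent sequences converges to the product of the limits, so $\kappa_{23}(G_n)\to\kappa_2(X)\kappa_3(X)=\kappa_{23}(X)$ and the term vanishes. Since $\alpha_{12},\alpha_{13}$ do not depend on $n$, the entire second block tends to $0$, and therefore $d_3(H_2(N_n,f_n,Y),X)\to 0$ as $n\to\infty$.

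Finally I would upgrade convergence in the smooth Wasserstein distance $d_3$ to convergence in distribution. This is immediate from the metric comparisons recorded in Section \ref{PP2:FS}: by \eqref{mre1} and the subsequent inequality $d_K(Y,Z)\leq c\,(3\sqrt2)^{1-2^{1-r}}\,(d_r(Y,Z))^{2^{-r}}$ with $r=3$, we have $d_K(H_2(N_n,f_n,Y),X)\to 0$, and convergence in Kolmogorov distance to a random variable with continuous distribution function gives $H_2(N_n,f_n,Y)\overset{L}{\to}X$. I do not expect any genuine obstacle in this argument; if anything must be watched, it is merely the bookkeeping needed to confirm that each of the many terms in \eqref{newhombd} individually tends to zero—in particular handling the product cumulant $\kappa_{23}(G_n)$—and the final passage from $d_3$-convergence to weak convergence via the metric hierarchy.
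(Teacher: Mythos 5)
Your proposal is correct and follows essentially the same route as the paper: the paper's own proof is the one-line observation that the conclusion is immediate from the bound \eqref{newhombd} once $\widetilde{\kappa}_j \to 0$ for $2\leq j\leq 6$ and the influence term vanishes. Your additional bookkeeping (the product cumulant $\kappa_{23}$ converging as a product of convergent sequences, and the passage from $d_3$ to weak convergence via the metric comparisons of Section \ref{PP2:FS}) merely makes explicit what the paper leaves implicit.
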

\noindent
The proof is immediate from \eqref{newhombd} as $\widetilde{\kappa}_j \to 0,$ for $\kappa_j(G_n) \to \kappa_j(X),~2\leq j\leq 6$.

\vskip 1ex
\noindent
As corollaries, we also have the following results for $SVG(\alpha,p)$ and $\mathcal{N}(0,\alpha^2)$ distributions. Let $\max_{1\leq i \leq N_n} \text{Inf}_i (f_n) \to 0$, as $n \to \infty.$ The following limit theorems hold:
\begin{enumerate}
	\item [(i)] If $\kappa_j(G_n) \to \kappa_j(X_s),~j=2,4,6$, then $H_2(N_n,f_n,Y) \overset{L}{\to} X_s, \text{ as }n\to\infty,$ where $X_s \sim SVG(\alpha,p)$, $\alpha>1$, $p>0$.
	
	\item [(ii)] If $\kappa_{2}(G_n) \to \alpha^2$, and $\kappa_j(G_n) \to 0$, for $j=3,6$, then $H_2(N_n,f_n,Y) \overset{L}{\to} Z_\alpha,  \text{ as }n\to\infty,$ where $Z_\alpha \sim \mathcal{N}(0,\alpha^2)$, $\alpha>1$.
\end{enumerate}

\begin{rem}
	(i) Note that the limit theorem for $SVG(\alpha,p)$ in (i) above was first proved by Eichelsbacher and Th$\ddot{\text{a}}$le \cite{key2}.
	
	\vskip 1ex
	\item [(ii)] It is shown in Theorem 1.9 of \cite{npr0} that
 $$\kappa_{4}(G_n)\to 0 \Rightarrow H_2(N_n,f_n,Y) \overset{L}{\to}Z_\alpha, \text{ as } n\to \infty,$$ 
 which is stronger result than our result (ii) above. 
\end{rem}
\noindent
Finally, as an application of Theorem \ref{CtoHSk}, we discuss a limit theorem for a second order $U$-statistic.
\begin{exmp}[Bilateral-gamma approximation to $U$-statistic]
	Let $Y=\{Y_i\}_{i\geq 1}$ be a sequence of centered i.i.d. rvs such that $\rho = \sup_{i \geq 1}\mathbb{E}[ |Y_i|^3]< \infty$. Following Section 5.1.1 of Examples (ii) of \cite{serfling}, we consider a second order $U$-statistic as
	\begin{align}\label{ustat}
		U_n=\frac{1}{\binom{n}{2}} \sum_{1 \leq i_1 <i_2\leq n}Y_{i_1}Y_{i_2},~n>1.
	\end{align}
\noindent
Note that \eqref{ustat} can be viewed as homogeneous sum $H_2(n,f_n,Y)$ (see \eqref{HoSumsDef}). Here,
\begin{align*}
	f_n(i_1,i_2)&= \frac{1}{2 \binom{n}{2}}\mathbf{1}_{ \{1 \leq i_1 <i_2\leq n\} }\\
	&=\frac{1}{n(n-1)}\mathbf{1}_{ \{1 \leq i_1 <i_2\leq n \}}.
\end{align*}
 \noindent
 Following steps similar to Example 11.2.7 of \cite{nourdin}, it follows that
 \begin{align*}
 	\text{Inf}_{i}(f_n)&=\frac{(n-1)}{(2!)^2 (2-1) n(n-1)}\\
 	&=\frac{1}{4n}.
 \end{align*}
 \noindent
 So, $\text{Inf}_i(f_n) \to 0$ as $n\to \infty$. Next, let $Z_i=I_1(e_i)$, $i\geq 1$, be a sequence of i.i.d. normal $\mathcal{N}(0,1)$ rvs over an isonormal Gaussian process $W$, where $I_1$ is the Wiener-It$\hat{\text{o}}$ integral of order $1$ and $(e_i)_{i\geq 1}$ is an orthonormal basis of the corresponding separable Hilbert space $\mathcal{H}$. From \cite[p. 2363]{rosen3}, we consider a second order $U$-statistic with degeneracy of order $1$ such that, for all $n\geq 1$,
 \begin{align*}
 	U_{n}^{\star}&=\frac{1}{\binom{n}{2}} \sum_{1 \leq i_1 <i_2\leq n}Z_{i_1}Z_{i_2}\\
 	&=I_2(g_n),
 \end{align*}
 \noindent
 where $g_n=\frac{1}{\binom{n}{2}}e_{i_1}\otimes e_{i_2}\in \mathcal{H}^{\odot 2}$ and $I_2$ denotes the second order Wiener-It$\hat{\text{o}}$ integral. Also, let $X\sim BG(\alpha_1,p_1,\alpha_{2},p_2)$ with $\alpha_1\alpha_2>(1+|\alpha_1-\alpha_{2}|)$ and $p_1\alpha_2=p_2\alpha_1$ (so that $\mathbb{E}(X)=0$). Then, by Theorem \ref{CtoHSk}, if $\kappa_j(U_n^\star) \to \kappa_j (X),$ $2\leq j \leq 6$,
 \begin{align*}
 	d_3(U_n,X) \to 0, \text{ as }n\to \infty.
 \end{align*}
 That is, $U_n \overset{L}{\to}X,$ as $n \to \infty.$

\end{exmp}


\setstretch{1}

\end{document}